\newcommand{\scr}[1]{\mathscr #1}
\def\N{\mathbb{N}}
\def\R{\mathbb{R}}
\def\E{\mathbb{E}}
\def\EE{\mathrm{E}}
\def\PP{\mathbb{P}}
\def\W{\mathrm{W}}
\def\d{\mathrm{d}}
\def\L{\mathrm{L}}
\def\e{\mathrm{e}}
\def\I{\mathrm{I}}
\def\M{\scr M}
\def\J{\mathrm{J}}
\def\Pr{\scr P}
\def\<{\langle} \def\>{\rangle}
\newcommand{\mathbbm}[1]{\text{\usefont{U}{bbm}{m}{n}#1}}
\DeclareMathOperator{\Lip}{Lip}
\DeclareMathOperator{\diam}{diam}
\newtheorem{theo}{\textsc{Theorem}}
\newtheorem{lem}[theo]{\textsc{Lemma}}
\newtheorem{propo}[theo]{\textsc{Proposition}}
\newtheorem{cor}[theo]{\textsc{Corollary}}
\theoremstyle{remark}
\newtheorem{remark}{Remark}
\theoremstyle{definition}
\newcommand\beq{\begin {equation}}
\newcommand\eeq{\end {equation}}
\newcommand\beqs{\begin {equation*}}
\newcommand\eeqs{\end {equation*}}
\begin{document}

\title{Asymptotic behavior of Wasserstein distance for weighted empirical measures of diffusion processes on compact Riemannian manifolds}

\author{\em{Jie-Xiang Zhu}\footnote{Research supported partially by the National Key R\&D Program of China (No. 2022YFA1006000, 2020YFA0712900) and NNSFC (11921001).}\\
{\footnotesize Center for Applied Mathematics, Tianjin University, Tianjin 300072, China}\\
{\footnotesize  jiexiangzhu7@gmail.com }
}
\date{\today}

\maketitle

\begin{abstract}
Let $(X_t)_{t \ge 0}$ be a diffusion process defined on a compact Riemannian manifold, and for $\alpha > 0$, let
$$
\mu_t^{(\alpha)}  =  \frac{\alpha}{t^\alpha} \int_{0}^{t} \delta_{X_s} \, s^{\alpha - 1} \d s
$$
be the associated weighted empirical measure. We investigate asymptotic behavior of $\E^\nu \big[ \W_2^2(\mu_t^{(\alpha)}, \mu) \big]$ for sufficient large $t$, where $\W_2$ is the quadratic Wasserstein distance and $\mu$ is the invariant measure of the process. In the particular case $\alpha = 1$, our result sharpens the limit theorem achieved in \cite{WZ19}. The proof is based on the PDE and mass transportation approach developed by L. Ambrosio, F. Stra and D. Trevisan.
\end{abstract}\noindent

{\bf Key words and phrases:} Empirical measure, diffusion process, Riemannian manifold, Wasserstein distance

{\bf Mathematics Subject Classification (2020):} {\bf 60D05, 58J65}

\vskip 2cm

\renewcommand{\theequation}{\thesection.\arabic{equation}}
\setcounter{equation}{0} \maketitle

\vspace{-1.0cm}

\bigskip

\section{Introduction and main results}

The long-time asymptotic behavior of empirical measures is a classical topic in the fields of probability theory and statistics, and has been widely investigated in various settings. Let $M$ be a $d$-dimensional smooth complete connected compact Riemannian manifold without boundary. Let $V \in C^2(M)$ such that the measure $\d \mu  = \e^V \d x$ is a probability measure, where $\d x$ is the Riemannian measure on $M$. Let $(X_t)_{t \ge 0}$ denote the diffusion process on $M$ generated by the second order differential operator $\L  = \Delta + \nabla V \cdot \nabla$, where $\Delta$ and $\nabla$ are the Laplace-Beltrami operator and the gradient operator on $M$, respectively. Denote the associated diffusion semigroup by $( P_t )_{t \geq 0}$, which is symmetric with respect to the invariant measure $\mu$. The empirical measure on $(X_t)_{t \ge 0}$
\beq \label{em}
\mu_t \,  := \, \frac{1}{t} \int_{0}^{t} \delta_{X_s} \d s, \quad t > 0,
\eeq
characterizes the sample path of $X_t$. Under our assumptions, $\mu_t$ converges to $\mu$ as $t \to \infty$ in the weak topology. In this work, we will consider a family of weighted empirical measures on $(X_t)_{t \ge 0}$, denoted by $\mu_t^{(\alpha)}$ $(\alpha>0)$, which is defined by
\beq \label{wem}
\mu_t^{(\alpha)} \, : = \, \frac{\alpha}{t^\alpha} \int_{0}^{t} \delta_{X_s} \, s^{\alpha - 1} \d s, \quad t > 0.
\eeq
The weighted empirical measure $\mu_t^{(\alpha)}$ is equivalent to the usual empirical measure on the time-changed process $(X_{t^{\alpha^{-1}}})_{t \ge 0}$ at time $t^\alpha$. This family of weighted empirical measures can be used to approximate other kinds of empirical measures. For instance, for $N \in \N$ fixed, there exists a sequence of polynomials $\{ Q_k \}_{k = 1}^{\infty}$ such that for every $f \in C_b([0, 1])$,
\beqs
\lim_{k \to \infty} \int_{0}^{1} f(y) Q_k(y) \, \d y \, = \, \frac1N \sum_{i = 1}^{N} f(i/N).
\eeqs
And as a consequence of ergodicity, the law of $(X_{t/N}, X_{2t/N}, \ldots, X_t)$ converges to $\mu^{\otimes N}$ as $t \to \infty$. Therefore the weighted empirical measures $\frac{1}{t} \int_{0}^{t}  \delta_{X_s} Q_k(s/t) \d s$, each of which is a finite linear combinations of $\mu_t^{(\alpha)}$, simulate the empirical measure generated by $N$ independent samples with common distribution $\mu$ as $k, t \to \infty$.

It is natural to explore the convergence rate of $\mu_t^{(\alpha)}$ to $\mu$. We formulate this problem in the framework of Wasserstein (Kantorovich) distances. Let $\rho$ be the geodesic distance on $M$, and let $\Pr(M)$ be the set of all Borel probability measure on $M$. Given $p \geq 1$, the $p$-th Wasserstein distance (see e.g. \cite{V09}) between $\nu_0, \nu_1 \in \Pr(M)$ is defined by
\beqs
\W_p(\nu_0, \nu_1) \, = \, \inf_\pi \bigg( \int_{M \times M} \rho(x, y)^p \d \pi(x, y)  \bigg)^{\frac1p},
\eeqs
where the infimum is taken over all couplings $\pi$ on $M \times M$ with respective marginals $\nu_0$ and $\nu_1$. The aim of this paper is to explore the order of decay in $t$, and furthermore, the renormalized limit of the expectation
\beqs
\E^\nu \big[ \W_2^2(\mu_t^{(\alpha)}, \mu)  \big],
\eeqs
as $t \to \infty$, where $\E^\nu$ denotes the expectation taken for the diffusion process with initial distribution $\nu \in \Pr(M)$. Note that the corresponding results for $\mu_t$ (namely the case $\alpha = 1$) could be found in \cite{WZ19}.

A major progress in recent years is the work of L. Ambrosio, F. Stra and D. Trevisan \cite{AMB}, which studies the asymptotic behavior of $\E [ \W_2^2(\mu_n, \mu) ] $,
where
$$
\mu_n \,  := \, \frac{1}{n} \sum_{i = 1}^{n} \delta_{X_i}, \quad n \in \N,
$$
is the empirical measure of independent random variables $(X_i)_{1 \le i \le n}$ with common distribution $\mu$, a discrete analogue of \eqref{em}. More precisely, they proved that if $\mu$ is the Riemannian measure on a compact $2$-dimensional compact manifold $(M, \rho)$ without boundary, then
\beqs
\lim_{n \to \infty} \frac{n}{\ln n} \E \big[ \W_2^2(\mu_n, \mu) \big] \, = \, \frac{1}{4\pi}.
\eeqs
Their proof is based on a PDE and mass transportation approach. The key idea involves the comparison between the $2$-Wasserstein distance and the $H^{-1, 2}$-Sobolev norm of the density function (see Section \ref{wd and sobo} below). This methodology has already been applied to a number of settings. Here we only cite a few advances on the continuous time cases. The long-time behavior of $\E^\nu [\W_2^2(\mu_t, \mu)]$ as well as its renormalized limit in lower-dimensional case has been studied in \cite{W21, W23+, W22, W23, WZ19} for empirical measures of symmetric diffusion processes. The extension of these results could be found in \cite{WB23, LW22, LW22+, LW22++} for symmetric subordinated processes, \cite{HMT22} for fractional Brownian motion on torus and \cite{W23++} for non-symmetric subordinated processes. See also \cite{D23} for developments in the context of McKean-Vlasov SDEs.

Denote by $\{ \lambda_i \}_{i \geq 1}$ the sequence of non-trivial eigenvalues of $(-\L)$ listed in the increasing order with multiplicity, and by $\{ \phi_i \}_{i \geq 1}$ the sequence of the corresponding eigenfunctions, which is orthonormal in $L^2(\mu)$. For the weighted empirical measure \eqref{wem}, our first result is stated as follows.

\begin{theo} \label{main}
Let $(M, \rho)$ be a connected complete compact weighted Riemannian manifold without boundary, equipped with a weighted probability measure $\d \mu = \e^V \d x$. Assume $M$ has the dimension $d \le 3$. Then
\begin{enumerate}[(i)]
\item If $\alpha \in (\frac{(d - 2)^+}{4}, \frac12)$,
\beqs
\lim_{t \to \infty} t^{2\alpha} \, \E^\nu \big[ \W_2^2 ( \mu_t^{(\alpha)}, \mu )  \big] \, = \, \nu \big( H^{(\alpha)} \big),
\eeqs
where $H^{(\alpha)} \in C_b(M)$ is defined by
\beqs
H^{(\alpha)}(x) \, = \, 2 \alpha^2 \iint_{\{0 \, \leq \, u \, \leq \, v \, < \, \infty  \}}  P_{u} \bigg( \sum_{i=1}^{\infty} \lambda_i^{-1} \e^{-\lambda_i(v - u)}\phi_i^2 \bigg)(x) \, u^{\alpha - 1} v^{\alpha - 1} \d u \d v, \quad x \in M.
\eeqs

\item If $\alpha = \frac12$,
\beqs
\lim_{t \to \infty} \frac{t}{\ln t} \, \E^\nu \big[ \W_2^2 ( \mu_t^{(\alpha)}, \mu )  \big] \, = \, \frac12 \sum_{i = 1}^{\infty} \frac{1}{\lambda_i^2}.
\eeqs

\item If $\alpha \in (\frac12, \infty)$,
\beqs
\lim_{t \to \infty} t \, \E^\nu \big[ \W_2^2 ( \mu_t^{(\alpha)}, \mu )  \big] \, = \, \frac{2 \alpha^2}{2\alpha - 1} \sum_{i = 1}^{\infty} \frac{1}{\lambda_i^2 }.
\eeqs
\end{enumerate}
Moreover, all the above limits converge uniformly in $\nu \in \Pr(M)$.
\end{theo}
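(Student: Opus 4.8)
The plan is to run the PDE and mass transportation scheme of \cite{AMB}, in the form recalled in Section~\ref{wd and sobo}, which compares $\W_2^2$ against the $H^{-1,2}(\mu)$-Sobolev norm of a density. Since $\mu_t^{(\alpha)}$ is purely atomic, I first regularize it by the semigroup: let $f_{t,\varepsilon}$ denote the density of $P_\varepsilon^*\mu_t^{(\alpha)}$ with respect to $\mu$, so that $\<f_{t,\varepsilon}-1,\phi_i\>_{L^2(\mu)}=\e^{-\lambda_i\varepsilon}\,\<\mu_t^{(\alpha)}-\mu,\phi_i\>$. The proof then splits into two essentially independent tasks: an exact spectral evaluation of $\E^\nu\big[\|\mu_t^{(\alpha)}-\mu\|_{H^{-1,2}(\mu)}^2\big]$ together with its large-$t$ asymptotics, and a quantitative comparison of this quantity with $\E^\nu\big[\W_2^2(\mu_t^{(\alpha)},\mu)\big]$.

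For the first task, note that $\mu(\phi_i)=0$ for every non-trivial $\phi_i$, hence
\[
\<\mu_t^{(\alpha)}-\mu,\phi_i\>\;=\;\frac{\alpha}{t^{\alpha}}\int_0^t\phi_i(X_s)\,s^{\alpha-1}\,\d s .
\]
By the Markov property together with the symmetry $P_s\phi_i=\e^{-\lambda_i s}\phi_i$, one has $\E^\nu[\phi_i(X_s)\phi_i(X_r)]=\e^{-\lambda_i(s-r)}\,\nu(P_r\phi_i^2)$ for $r\le s$, so
\[
\E^\nu\!\Big[\<\mu_t^{(\alpha)}-\mu,\phi_i\>^2\Big]
=\frac{2\alpha^2}{t^{2\alpha}}\iint_{\{0\le r\le s\le t\}}\e^{-\lambda_i(s-r)}\,\nu(P_r\phi_i^2)\,r^{\alpha-1}s^{\alpha-1}\,\d r\,\d s .
\]
Multiplying by $\lambda_i^{-1}$ and summing over $i$ produces a closed double-integral expression for $\E^\nu\big[\|\mu_t^{(\alpha)}-\mu\|_{H^{-1,2}(\mu)}^2\big]$ involving $\nu\big(P_r\sum_{i}\lambda_i^{-1}\e^{-\lambda_i(s-r)}\phi_i^2\big)$; the inner sum is finite for $s>r$, and the whole expression is finite because $d\le3$ guarantees $\sum_i\lambda_i^{-2}<\infty$.

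Next I analyze this double integral as $t\to\infty$. Exponential ergodicity, $\|P_rg-\mu(g)\|_\infty\le C\,\e^{-\lambda_1 r}\|g\|_\infty$, allows replacing $\nu(P_r\phi_i^2)$ by $\mu(\phi_i^2)=1$ up to a correction which is both integrable and, after renormalization, uniformly negligible in $\nu$. In the remaining integral the factor $\e^{-\lambda_i(s-r)}$ confines the mass to $|s-r|=O(1)$, so the $t$-dependence is governed by $\int_1^t r^{2\alpha-2}\,\d r$, which stays bounded for $\alpha<\tfrac12$, is of order $\ln t$ for $\alpha=\tfrac12$, and of order $t^{2\alpha-1}/(2\alpha-1)$ for $\alpha>\tfrac12$. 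For $\alpha\ge\tfrac12$ the dominant contribution comes from $r$ large, where $\nu(P_r\phi_i^2)\to1$ and one reads off $\tfrac12\sum_i\lambda_i^{-2}$ (case $\alpha=\tfrac12$, normalization $t/\ln t$) and $\tfrac{2\alpha^2}{2\alpha-1}\sum_i\lambda_i^{-2}$ (case $\alpha>\tfrac12$, normalization $t$); for $\alpha<\tfrac12$ the full $P_r$-evolution survives in the limit and, after normalizing by $t^{2\alpha}$, the limit is exactly $\nu(H^{(\alpha)})$.

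It remains to upgrade this to $\W_2^2$. For the upper bound I use, for $\delta>0$,
\[
\W_2^2(\mu_t^{(\alpha)},\mu)\;\le\;(1+\delta)\,\W_2^2\big(f_{t,\varepsilon}\mu,\mu\big)+C_\delta\,\W_2^2\big(\mu_t^{(\alpha)},f_{t,\varepsilon}\mu\big),
\]
bound $\W_2^2(\mu_t^{(\alpha)},f_{t,\varepsilon}\mu)\le C\varepsilon$ by convexity of $\W_2^2$ and the $O(\sqrt\varepsilon)$ displacement of the heat flow, and apply the comparison of \cite{AMB} in the form $\W_2^2(f_{t,\varepsilon}\mu,\mu)\le\|f_{t,\varepsilon}-1\|_{H^{-1,2}(\mu)}^2+\Er_{t,\varepsilon}$; the matching lower bound is obtained symmetrically from the Benamou--Brenier formulation, using that the displacement interpolation between $f_{t,\varepsilon}\mu$ and $\mu$ stays close to $\mu$. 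The crux — and the step I expect to be the main obstacle — is to choose $\varepsilon=\varepsilon(t)\downarrow0$ so that both $\varepsilon$ and $\E^\nu[\Er_{t,\varepsilon}]$ are $o$ of the leading term found above. The error $\Er_{t,\varepsilon}$ is controlled by higher-regularity quantities of $u$ with $-\L u=f_{t,\varepsilon}-1$, which in expectation are estimated term by term from the eigenfunction bounds $\|\phi_i\|_\infty\lesssim\lambda_i^{(d-1)/4}$, the ultracontractive bound $\|P_r\|_{L^2(\mu)\to L^\infty}\lesssim r^{-d/4}$, and the smoothing factors $\lambda_i^{-1}\e^{-\lambda_i\varepsilon}$; it is precisely here that the hypothesis $d\le3$, and — in case (i), where the leading term only decays like $t^{-2\alpha}$ — the additional restriction $\alpha>\tfrac{(d-2)^+}{4}$ (a genuine constraint only when $d=3$), are needed to close the estimates. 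Uniformity in $\nu\in\Pr(M)$ is then automatic, since $\nu$ enters only through terms of the form $\nu(P_r(\cdot))$ and through initial-layer contributions, all dominated using the same uniform exponential ergodicity.
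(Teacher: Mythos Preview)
Your spectral computation (the ``first task'') is correct and matches the paper's Propositions~\ref{energy} and~\ref{energy'} exactly: the double integral you write is the paper's $\nu(H_{t,r}^{(\alpha)})$, and the trichotomy in $\alpha$ arises precisely from the $\int_1^t r^{2\alpha-2}\,\d r$ behaviour you isolate. Where your sketch has genuine gaps is in the second task, and they are not symmetric between the upper and lower bound.

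For the \emph{lower} bound, ``symmetrically from Benamou--Brenier'' does not work: the triangle inequality points the wrong way, so you cannot simply pay $O(\varepsilon)$ to pass from $\mu_t^{(\alpha)}$ to $f_{t,\varepsilon}\mu$. The paper instead uses the heat-flow contraction $\W_2(\mu_t^{(\alpha)},\mu)\ge \e^{K\varepsilon}\W_2(P_\varepsilon^*\mu_t^{(\alpha)},\mu)$ of Lemma~\ref{heat wasserstein1}(ii), which is essentially free. After that, the lower bound on $\W_2^2(f_{t,\varepsilon}\mu,\mu)$ comes from Kantorovich duality with $\varphi_0=(-\L)^{-1}(f_{t,\varepsilon}-1)$ and the estimate \eqref{Dual} from \cite{AMB}, whose error term is of size $\|f_{t,\varepsilon}-1\|_\infty\cdot\|f_{t,\varepsilon}-1\|_{H^{-1,2}}^2$. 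Controlling $\|f_{t,\varepsilon}-1\|_\infty$ uniformly is the ingredient you do not mention: the paper proves a Bernstein-type moment bound $\E^\nu[|\mu_t^{(\alpha)}(f)|^p]^{1/p}\lesssim\sqrt{p}\,R_\alpha(t)^{1/2}\|f\|_\infty$ (Lemma~\ref{fluctuation1}) and then runs Dudley's chaining to obtain $\E^\nu[\|f_{t,r}^{(\alpha)}-1\|_\infty^q]^{1/q}\lesssim R_\alpha(t)^{1/2}r^{-d/2}\sqrt{\log r^{-1}}$ (Proposition~\ref{uniform bound'}). This, combined with an $H^{-1,2p}$ bound (Lemma~\ref{J1}) and H\"older, closes the lower bound.

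For the \emph{upper} bound, your AMB comparison $\W_2^2(f_{t,\varepsilon}\mu,\mu)\le\|f_{t,\varepsilon}-1\|_{H^{-1,2}}^2+\Er_{t,\varepsilon}$ hides a real issue: the inequality of Lemma~\ref{basic wasserstein} has $\M(f_{t,\varepsilon},1)^{-1}$ in the integrand and blows up where $f_{t,\varepsilon}=0$, which your regularization does not exclude. The paper cures this by a further convex mixing $\mu_{t,r,\varrho}^{(\alpha)}=(1-\varrho)\mu_{t,r}^{(\alpha)}+\varrho\mu$, then bounds $|\M((1-\varrho)f+\varrho,1)^{-1}-1|$ in $L^q$ by $\log(\varrho^{-1})$ times an $L^2$ moment of $f-1$ (Lemma~\ref{fluc of density}); the point is that the error is governed by $\E^\nu[\|f_{t,r}-1\|_2^2]$ rather than by sup-norm or Hessian quantities of $u$. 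This $L^2$-based control is what makes the window of admissible $\varepsilon$ large enough to absorb the naive $O(\varepsilon)$ regularization cost under exactly the constraint $\alpha>\tfrac{(d-2)^+}{4}$. Your termwise eigenfunction-bound approach, by contrast, would force $\varepsilon\gg R_\alpha(t)^{1/d}$, which is incompatible with the required $\varepsilon=o(R_\alpha(t))$ once $d\ge 2$.
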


In order to unify notations, we introduce a function $R_\alpha : \R_+ \to \R_+$ to describe the decay rate as follows
\begin{align*}
R_\alpha (t) \, = \,
\begin{cases}
t^{-2\alpha} , & \textrm{if }\alpha \in (0, \frac12);\\
t^{-1} \ln t, & \textrm{if }\alpha = \frac12;\\
t^{-1}, & \textrm{if }\alpha \in (\frac12, \infty),
\end{cases}
\end{align*}
and we denote by $L_\alpha(\nu)$ the value of the limit presented in Theorem \ref{main} (depending on $\alpha$ and the initial distribution $\nu \in \Pr(M)$). As expected, Theorem \ref{main} quantifies the fact that the convergence of $\mu_t^{(\alpha)}$ to $\mu$ accelerates as $\alpha$ increases.  Theorem \ref{main} also indicates that the case $\alpha = \frac12$ is critical in two aspects. First, the decay rate function $R_\alpha$ (as a function of $\alpha$) has a change at the value $\alpha = \frac12$. Moreover, if $\alpha \in (\frac{(d - 2)^+}{4}, \frac12)$, the value of the renormalized limit $L_\alpha(\nu)$ depends on the initial distribution $\nu$, while if $\alpha \in [\frac12, \infty)$, $L_\alpha(\nu)$ no longer depends on $\nu$, which is a phenomenon of interest. For more quantitative information of $\E^\nu \big[ \W_2^2 ( \mu_t^{(\alpha)}, \mu )  \big]$, our argument can indeed tighten Theorem \ref{main} in the following sense.

\begin{theo} \label{main1'}
Under the assumptions of Theorem \ref{main}. Then
\beqs
\limsup_{t \to \infty} \sup_{\nu \in \Pr(M)} \bigg( R_\alpha(t)^{-1} \E^\nu \big[ \W_2^2 \big( \mu_t^{(\alpha)}, \mu \big) \big] \, - \, L_\alpha (\nu) \bigg) \cdot K(t)^{-1} \, < \, \infty,
\eeqs
and
\beqs
\liminf_{t \to \infty} \inf_{\nu \in \Pr(M)} \bigg( R_\alpha(t)^{-1} \E^\nu \big[ \W_2^2 \big( \mu_t^{(\alpha)}, \mu \big) \big] \, - \, L_\alpha (\nu) \bigg) \cdot \widetilde{K}(t)^{-1} \, > \, -\infty,
\eeqs
where $K(t)$ and $\widetilde{K}(t)$ are positive functions depending on $d$ and $\alpha$, and satisfy $\lim_{t \to \infty} K(t) = \lim_{t \to \infty} \widetilde{K}(t) = 0$.
\end{theo}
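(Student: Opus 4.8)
The plan is to prove Theorem~\ref{main1'} by revisiting the proof of Theorem~\ref{main} and keeping every error term explicit, then optimizing the free regularization parameters. Recall the Ambrosio--Stra--Trevisan scheme: one regularizes $\mu_t^{(\alpha)}$ through the diffusion semigroup, replacing it by $\mu_t^{(\alpha)}P_\epsilon$, whose $\mu$-density $g_{t,\epsilon}$ is smooth, compares $\W_2^2(\mu_t^{(\alpha)}P_\epsilon,\mu)$ with the squared homogeneous negative Sobolev norm
\[
\bigl\|g_{t,\epsilon}-1\bigr\|_{H^{-1,2}(\mu)}^2 \ = \ \sum_{i\ge 1}\lambda_i^{-1}\,\e^{-2\lambda_i\epsilon}\,\bigl(\mu_t^{(\alpha)}\phi_i\bigr)^2 ,
\]
and finally computes the expectation of the latter. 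Setting $\mathcal E_t^{(\alpha)}(\nu,\epsilon):=\E^\nu\sum_{i\ge1}\lambda_i^{-1}\e^{-2\lambda_i\epsilon}(\mu_t^{(\alpha)}\phi_i)^2$, the argument splits into (a) a \emph{deterministic comparison step}, bounding $\bigl|\E^\nu[\W_2^2(\mu_t^{(\alpha)},\mu)]-\mathcal E_t^{(\alpha)}(\nu,\epsilon)\bigr|$ by quantities depending only on $\epsilon$, an auxiliary parameter $\delta>0$, the Ricci lower bound and $\|g_{t,\epsilon}\|_\infty$; and (b) a \emph{spectral asymptotics step}, quantifying the rate at which $R_\alpha(t)^{-1}\mathcal E_t^{(\alpha)}(\nu,\epsilon)\to L_\alpha(\nu)$.

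For step (a) I would establish two-sided bounds of the form
\[
(1-\delta)\,\mathcal E_t^{(\alpha)}(\nu,\epsilon)-\Psi_-(\epsilon,\delta,t) \ \le \ \E^\nu\bigl[\W_2^2(\mu_t^{(\alpha)},\mu)\bigr] \ \le \ (1+\delta)\,\mathcal E_t^{(\alpha)}(\nu,\epsilon)+\Psi_+(\epsilon,\delta,t),
\]
valid for all small $\delta,\epsilon$. The upper bound combines $\W_2^2(\mu_t^{(\alpha)},\mu_t^{(\alpha)}P_\epsilon)\le C\epsilon$, obtained from the heat-kernel moment estimate $\int p_\epsilon(x,y)\rho(x,y)^2\,\mu(\d y)\le C\epsilon$ on the compact manifold, with the Ambrosio--Stra--Trevisan interpolation inequality applied to $g_{t,\epsilon}$; the lower bound uses Kantorovich duality with a semigroup-regularized Kantorovich potential. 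The essential point is that $\|g_{t,\epsilon}\|_\infty\le C\epsilon^{-d/2}$ by the ultracontractivity bound $\|P_\epsilon\|_{L^1(\mu)\to L^\infty(\mu)}\le C\epsilon^{-d/2}$, \emph{uniformly in} $\nu\in\Pr(M)$ and $t$, so that $\Psi_\pm$ has the schematic form $C\epsilon+C\delta\,L_\alpha(\nu)+C\delta^{-1}\epsilon^{-d/2}\,\Theta(t,\epsilon)$, where $\Theta(t,\epsilon)=\E^\nu\|g_{t,\epsilon}-1\|_X^2$ in a suitable norm $X$ is again uniform in $\nu$ and tends to $0$.

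For step (b) the starting point is the identity, from the Markov property and $P_{r-s}\phi_i=\e^{-\lambda_i(r-s)}\phi_i$,
\[
\E^\nu\bigl[(\mu_t^{(\alpha)}\phi_i)^2\bigr] \ = \ \frac{2\alpha^2}{t^{2\alpha}}\int_0^t\!\!\int_0^r\e^{-\lambda_i(r-s)}\,\nu\bigl(P_s\phi_i^2\bigr)\,s^{\alpha-1}r^{\alpha-1}\,\d s\,\d r .
\]
Writing $\nu(P_s\phi_i^2)=1+[\nu(P_s\phi_i^2)-1]$, the bracket is, for $s$ bounded away from $0$, at most $C\e^{-\lambda_1 s}\lambda_i^{\gamma(d)}$ uniformly in $\nu$, by the spectral gap, H\"ormander's eigenfunction bound $\|\phi_i\|_\infty\le C\lambda_i^{(d-1)/4}$ and ultracontractivity near $s=0$. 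The ``$1$''-part produces the main term $L_\alpha(\nu)$ --- matching the formulas of Theorem~\ref{main} upon identifying $u=s$, $v=r$ in the $\alpha<\frac12$ case, and upon using $\int_0^r\e^{-\lambda_i(r-s)}s^{\alpha-1}\,\d s\sim\lambda_i^{-1}r^{\alpha-1}$ together with $\int^t r^{2\alpha-2}\,\d r$ in the other two cases --- with a \emph{one-signed} remainder equal to the tail $R_\alpha(t)^{-1}\sum_i\lambda_i^{-1}\cdot2\alpha^2\iint_{v>t,\,0\le u\le v}(\cdots)$, of order $t^{2\alpha-1}$ if $\alpha<\frac12$, $(\ln t)^{-1}$ if $\alpha=\frac12$, $t^{-1}$ if $\alpha>\frac12$. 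The ``bracket''-part, the high-mode tail $\sum_{i>N_t}$, and the regularization bias $\sum_i\lambda_i^{-1}(1-\e^{-2\lambda_i\epsilon})\E^\nu[(\mu_t^{(\alpha)}\phi_i)^2]$ are then estimated via Weyl's law $\lambda_i\ge c\,i^{2/d}$, which makes $\sum_i\lambda_i^{-2}<\infty$ precisely because $d\le 3$, yielding remainders that are a small power of $\epsilon$ (up to logarithms) and decay rapidly in $N_t$. Choosing $\epsilon=\epsilon(t)\to 0$, $\delta=\delta(t)\to0$, $N_t\to\infty$ to balance these against $\Theta(t,\epsilon)$ and the time-truncation error produces the envelopes $K(t)$ and $\widetilde K(t)$; they differ because the one-signed time-truncation error and the one-sided slack in the two interpolation inequalities affect only the liminf estimate, and all constants are $\nu$-independent, so the supremum and infimum over $\Pr(M)$ are harmless.

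The main obstacle will be the tension in the choice of $\epsilon(t)$: it must vanish fast enough to suppress the regularization bias $1-\e^{-2\lambda_i\epsilon}$ and the error $\Theta(t,\epsilon)$, but the ultracontractive factor $\epsilon^{-d/2}$ --- the price for making the comparison uniform in $\nu$ --- grows, so $\Psi_\pm$ grows; obtaining a clean manifold-independent envelope requires estimating $\Theta(t,\epsilon)$ sharply enough that $\delta^{-1}\epsilon^{-d/2}\Theta(t,\epsilon)$ can still be beaten by a power or logarithm of $t$. This is tightest in the window $\alpha\in(\frac{(d-2)^+}{4},\frac12)$ with $d=3$ and $\alpha$ near $\frac14$, where the history-dependent main term $\nu(H^{(\alpha)})$ is only barely convergent --- the diagonal integral $\int_0 v^{2\alpha-3/2}\,\d v$ converges iff $\alpha>\frac14$ --- so the associated remainders decay slowly, which is exactly why $K$ and $\widetilde K$ are allowed to depend on $d$ and $\alpha$ and why the rates are not claimed to be sharp.
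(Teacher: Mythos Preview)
Your overall architecture---regularize via the semigroup, compare $\W_2^2$ with the $H^{-1,2}$-norm of the smoothed density, then do the spectral asymptotics---is exactly the paper's strategy. But your implementation of step~(a) relies on two crude inputs that the paper replaces by substantially sharper, and indeed indispensable, tools.

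First, the regularization cost: you use the deterministic bound $\W_2^2(\mu_t^{(\alpha)},\mu_t^{(\alpha)}P_\epsilon)\le C\epsilon$, which after dividing by $R_\alpha(t)$ forces $\epsilon\ll R_\alpha(t)$ (e.g.\ $\epsilon=o(t^{-1})$ when $\alpha>\tfrac12$). The paper instead proves (Theorem~\ref{regularization}) that on the high-probability event $\{\|f_{t,r}^{(\alpha)}-1\|_\infty\le\tfrac12\}$ one has $\W_2^2(\mu_{t,r}^{(\alpha)},\mu_t^{(\alpha)})\le 16\int_0^r\|f_{t,s}^{(\alpha)}-1\|_2^2\,\d s$, which is of order $R_\alpha(t)\cdot Q_{t,r}$ with $Q_{t,r}\to 0$ even for $r=t^{-\beta}$ with any $\beta\in(0,\tfrac{(2\alpha)\wedge 1}{d})$. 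This buys a much larger regularization scale, and is essential: with $\epsilon=o(t^{-1})$ and $d=3$, the $L^2$ density fluctuation $\E^\nu[\|f_{t,\epsilon}^{(\alpha)}-1\|_2^2]\sim t^{-1}\epsilon^{-1/2}$ does not vanish, and the uniform fluctuation blows up.

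Second, and more seriously, your control of the interpolation error rests on the \emph{deterministic} bound $\|g_{t,\epsilon}\|_\infty\le C\epsilon^{-d/2}$. This is an upper bound on the density; it says nothing about how close $g_{t,\epsilon}$ is to $1$, which is what the $\M(\cdot,1)^{-1}$ or Hamilton--Jacobi remainder actually needs. With your schematic $\Psi_\pm\sim C\epsilon+C\delta L_\alpha+C\delta^{-1}\epsilon^{-d/2}\Theta$, the factor $\epsilon^{-d/2}$ diverges at least like $t^{d/2}$ once $\epsilon\ll t^{-1}$, and no reasonable choice of $\Theta$ will absorb it. The paper instead develops a Bernstein-type moment bound for $\mu_t^{(\alpha)}(f)$ (Lemma~\ref{fluctuation1}) and then, via a Dudley entropy argument over $(M,\rho_{r^{1/2}})$, obtains the \emph{probabilistic} uniform-fluctuation bound $\E^\nu[\|f_{t,r}^{(\alpha)}-1\|_\infty^q]^{1/q}\le c_q\,R_\alpha(t)^{1/2}r^{-d/2}\sqrt{\log(r^{-1}+1)}$ (Proposition~\ref{uniform bound'}). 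The extra factor $R_\alpha(t)^{1/2}$ is exactly what makes the balance possible.

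A third ingredient you do not mention is the $H^{-1,2p}$ estimate (Lemma~\ref{J1}): the paper shows $R_\alpha(t)^{-1}\E^\nu[\,\|\nabla(-\L)^{-1}(f_{t,r}^{(\alpha)}-1)\|_{2p}^{2p}\,]^{1/p}$ is bounded uniformly for $1\le p<p_0(\alpha,d)$, which permits a H\"older decoupling of $|\nabla(-\L)^{-1}(f-1)|^2$ from the $\M^{-1}-1$ (upper bound) or $\|f-1\|_\infty$ (lower bound) factor. The upper bound also introduces a convex combination $\mu_{t,r,\varrho}^{(\alpha)}=(1-\varrho)\mu_{t,r}^{(\alpha)}+\varrho\mu$ to guarantee a lower bound $\varrho$ on the density, then applies Lemma~\ref{fluc of density}; the additional parameter $\varrho=t^{-\gamma}$ is absent from your scheme. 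Finally, the paper works throughout with the heat-trace functions $h_r^{(\theta)}$ of Lemma~\ref{trace} rather than H\"ormander's pointwise eigenfunction bound; this packages the Weyl asymptotics more efficiently.

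In short, your plan is the right one, but as written step~(a) does not close: you need to replace the deterministic density bound by the concentration estimate of Proposition~\ref{uniform bound'}, the crude heat-flow cost by Theorem~\ref{regularization}, and supply the $H^{-1,2p}$ bound to decouple. With those three pieces in place, the optimization you describe goes through and yields the explicit $K(t),\widetilde K(t)$ listed at the ends of Sections~\ref{up} and~\ref{low}.
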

The functions $K(t)$ and $\widetilde{K}(t)$ are explicitly given at the end of Section \ref{up} and \ref{low} respectively, which might not be optimal. In the particular case $\alpha = 1$, Theorem \ref{main1'} implies that as $t \to \infty$,
\beqs
\sup_{\nu \in \Pr(M)} \bigg| t \E^\nu \big[ \W_2^2 \big( \mu_t, \mu \big) \big] \, - \,  \sum_{i = 1}^{\infty} \frac{2}{\lambda_i^2 } \bigg| \, = \,
\begin{cases}
O(t^{-\frac13} \log(t)^{-1} ), & \textrm{when } d = 1;\\
O_\epsilon(t^{-\frac14 + \epsilon}), & \textrm{when } d = 2;\\
O_\epsilon(t^{-\frac{1}{12} + \epsilon}), & \textrm{when } d = 3.
\end{cases}
\eeqs
This bound sharpens Theorem 1.1.(1) of \cite{WZ19}. Throughout this work, $c$ denotes a positive absolute constant that may only depend on the underlying manifold $M$ and the parameter $\alpha$, whose value could change from line to line. We use the notation $f = O(g)$ if there exists $c > 0$ such that $f \leq c g$. We write $f \sim g$ if both $f = O(g)$ and $g = O(f)$ hold. We also write $f = O_\epsilon(g(\epsilon))$ if for any $\epsilon > 0$ there exists $c_\epsilon > 0$, depending on $\epsilon$, such that $f \leq c_\epsilon g(\epsilon)$.

For the remaining possible values of $d$ and $\alpha$, by the PDE and mass transportation approach, we have the following upper bounds for $\E^\nu \big[ \W_2^2 \big( \mu_t^{(\alpha)}, \mu \big) \big]$.

\begin{theo} \label{main2'}
Let $(M, \rho)$ be a connected complete compact weighted Riemannian manifold without boundary, equipped with a weighted probability measure $\d \mu = \e^V \d x$. Assume $M$ has the dimension $d \geq 3$. Then for all sufficiently large $t$,
\begin{enumerate}[(i)]
\item When $d = 3$,
\beqs
\sup_{\nu \in \Pr(M)} \E^\nu \big[ \W_2^2 \big( \mu_t^{(\alpha)}, \mu \big) \big] \, = \,
\begin{cases}
O(t^{-\frac{4\alpha}{3 - 4\alpha}}), & \textrm{if } \alpha \in (0, \frac14);\\
O(t^{-\frac12}\log(t)), & \textrm{if } \alpha = \frac14.
\end{cases}
\eeqs

\item When $d = 4$,
\beqs
\sup_{\nu \in \Pr(M)} \E^\nu \big[ \W_2^2 \big( \mu_t^{(\alpha)}, \mu \big) \big] \, = \,
\begin{cases}
O(t^{-\frac{\alpha}{1 - \alpha} } \log(t)), & \textrm{if } \alpha \in (0, \frac12);\\
O(t^{-1}\log(t)^2), & \textrm{if } \alpha = \frac12;\\
O(t^{-1} \log(t)), & \textrm{if } \alpha \in (\frac12, \infty).
\end{cases}
\eeqs

\item When $d \geq 5$,
\beqs
\sup_{\nu \in \Pr(M)} \E^\nu \big[ \W_2^2 \big( \mu_t^{(\alpha)}, \mu \big) \big] \, = \,
\begin{cases}
O(t^{-\frac{4 \alpha}{d - 2} }), & \textrm{if } \alpha \in (0, \frac12);\\
O(t^{- \frac{2}{d - 2}}\log(t)), & \textrm{if } \alpha = \frac12;\\
O(t^{- \frac{2}{d - 2}}), & \textrm{if } \alpha \in (\frac12, \infty).
\end{cases}
\eeqs

\end{enumerate}

\end{theo}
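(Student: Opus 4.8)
\emph{Sketch of proof.}
The plan is to run the PDE/mass-transportation scheme of \cite{AMB} in its upper-bound direction, substituting the spectral decomposition of the diffusion semigroup for the i.i.d.\ structure. Fix a mollification scale $\epsilon=\epsilon(t)>0$, to be optimized at the end, and put
\[
f_t^\epsilon\ :=\ \frac{\alpha}{t^\alpha}\int_0^t p_\epsilon(X_s,\cdot)\,s^{\alpha-1}\,\d s,
\]
the $\mu$-density of $\mu_t^{(\alpha)}P_\epsilon$, where $p_\epsilon$ is the heat kernel of $(P_t)$ with respect to $\mu$. From
\[
\W_2(\mu_t^{(\alpha)},\mu)\ \le\ \W_2\big(\mu_t^{(\alpha)},\mu_t^{(\alpha)}P_\epsilon\big)+\W_2\big(\mu_t^{(\alpha)}P_\epsilon,\mu\big)
\]
the first summand satisfies $\E^\nu\big[\W_2^2(\mu_t^{(\alpha)},\mu_t^{(\alpha)}P_\epsilon)\big]\le c\epsilon$, by joint convexity of $\W_2^2$ together with the short-time estimate $\W_2^2(\delta_x,p_\epsilon(x,\cdot)\mu)\le\E^{\delta_x}[\rho(x,X_\epsilon)^2]\le c\epsilon$, uniform in $x$; the second summand is controlled, up to lower-order terms in $\epsilon$, by the comparison between $\W_2$ and the $H^{-1,2}(\mu)$-Sobolev norm from Section~\ref{wd and sobo}, which applies because the mollification renders $f_t^\epsilon$ bounded ($\|f_t^\epsilon\|_\infty\le c\epsilon^{-d/2}$). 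Thus the problem reduces to bounding $\E^\nu\big[\|f_t^\epsilon-1\|_{H^{-1,2}(\mu)}^2\big]$ uniformly in $\nu$, followed by an optimization over $\epsilon$.

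Expanding in eigenfunctions via $p_\epsilon(x,y)=1+\sum_{i\ge1}\e^{-\lambda_i\epsilon}\phi_i(x)\phi_i(y)$ gives $f_t^\epsilon-1=\sum_{i\ge1}\e^{-\lambda_i\epsilon}a_i\phi_i$ with $a_i=\frac{\alpha}{t^\alpha}\int_0^t\phi_i(X_s)s^{\alpha-1}\d s$, hence $\|f_t^\epsilon-1\|_{H^{-1,2}(\mu)}^2=\sum_{i\ge1}\lambda_i^{-1}\e^{-2\lambda_i\epsilon}a_i^2$, and the Markov property together with $P_r\phi_i=\e^{-\lambda_i r}\phi_i$ yields
\[
\E^\nu[a_i^2]=\frac{2\alpha^2}{t^{2\alpha}}\iint_{\{0\le u\le v\le t\}}\e^{-\lambda_i(v-u)}\,\nu\big(P_u\phi_i^2\big)\,u^{\alpha-1}v^{\alpha-1}\,\d u\,\d v.
\]
The decisive point is to sum over $i$ before estimating: since $\sum_{i\ge1}\lambda_i^{-1}\e^{-\lambda_i s}\phi_i(y)^2=\int_s^\infty\big(p_r(y,y)-1\big)\d r$ and $p_r(y,y)\le1+cr^{-d/2}$, this sum is $\le c\,\psi_d(s)$ uniformly in $y$, where $\psi_d(s)\sim s^{-(d-2)/2}$ as $s\downarrow0$ for $d\ge3$ and $\psi_d$ is bounded on $[1,\infty)$; since $\nu P_u$ is a probability measure, it follows that
\[
\E^\nu\big[\|f_t^\epsilon-1\|_{H^{-1,2}(\mu)}^2\big]\ \le\ \frac{c}{t^{2\alpha}}\iint_{\{0\le u\le v\le t\}}\psi_d(2\epsilon+v-u)\,u^{\alpha-1}v^{\alpha-1}\,\d u\,\d v\ =:\ \frac{c}{t^{2\alpha}}\,I_d(t,\epsilon),
\]
uniformly in $\nu\in\Pr(M)$.

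It remains to estimate $I_d(t,\epsilon)$, which I would do by a multi-region decomposition of $\{0\le u\le v\le t\}$. On $\{v-u\ge1\}$ the kernel $\psi_d$ is bounded and $I_d$ inherits the factor $\int^t s^{2\alpha-2}\,\d s$, of order $t^{2\alpha-1}$, $\ln t$, or $O(1)$ according to $\alpha>\tfrac12$, $\alpha=\tfrac12$, $\alpha<\tfrac12$ --- precisely the source of the three regimes of $R_\alpha$. The near-diagonal strip $\{0<v-u\le1\}$ contributes, through the singularity of $\psi_d$, a negative power of $\epsilon$ (or a $\ln(1/\epsilon)$), and the corner $\{u,v\downarrow0\}$ contributes, by a scaling count, a term $\sim\epsilon^{2\alpha-(d-2)/2}$, which diverges as $\epsilon\downarrow0$ exactly when $2\alpha<(d-2)/2$, i.e.\ outside the exact-asymptotics range of Theorem~\ref{main}. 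Assembling these, $\E^\nu\big[\W_2^2(\mu_t^{(\alpha)},\mu)\big]\le c\big(\epsilon+R_\alpha(t)\,\Psi_{d,\alpha}(\epsilon)\big)$ with $\Psi_{d,\alpha}$ an explicit negative power of $\epsilon$ (with a logarithm at the critical values of $d$ and $\alpha$), and balancing $\epsilon$ against $R_\alpha(t)\Psi_{d,\alpha}(\epsilon)$ in each of the cases $d=3$, $d=4$, $d\ge5$ and $\alpha<\tfrac12$, $\alpha=\tfrac12$, $\alpha>\tfrac12$ reproduces the stated exponents $\tfrac{4\alpha}{3-4\alpha}$, $\tfrac{\alpha}{1-\alpha}$, $\tfrac{4\alpha}{d-2}$, $\tfrac{2}{d-2}$, together with the indicated logarithmic corrections; uniformity in $\nu$ is automatic, every bound above having been made through $\|P_u\phi_i^2\|_\infty$ and the total mass of $\nu P_u$.

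I expect the main obstacle to be this final step: disentangling in $I_d(t,\epsilon)$ the three competing effects --- the $t$-growth from large time separation, the near-diagonal singularity of $\psi_d$, and the corner singularity at $u=v=0$ --- and in particular pairing the mollification scale $\epsilon$ with the corner singularity sharply enough to recover the announced exponents rather than a lossy bound; this needs a well-chosen decomposition and a careful interplay of the on-diagonal bound $p_r(y,y)\le1+cr^{-d/2}$ with the weights $u^{\alpha-1}v^{\alpha-1}$. By comparison, the concluding one-variable optimization in $\epsilon$, and the verification that the heat-regularized $f_t^\epsilon$ meets the hypotheses of the $\W_2$--$H^{-1,2}$ comparison of Section~\ref{wd and sobo}, are routine.
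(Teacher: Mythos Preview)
Your approach is essentially the paper's: mollify by the heat semigroup, split via the triangle inequality, bound $\W_2^2(\mu_t^{(\alpha)},\mu_t^{(\alpha)}P_\epsilon)\le c\epsilon$ (Lemma~\ref{heat wasserstein1}(i)), control $\W_2^2(\mu_t^{(\alpha)}P_\epsilon,\mu)$ by the $H^{-1,2}$ norm via Lemma~\ref{basic wasserstein} with $f_0=1$, rewrite the Sobolev norm through the eigen-expansion, bound $\sum_i\lambda_i^{-1}\e^{-\lambda_i s}\phi_i^2$ pointwise by the on-diagonal heat kernel (this is exactly $h_s^{(1)}$ of \eqref{theta function} and Lemma~\ref{trace}), then optimize in $\epsilon$. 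The only cosmetic difference is that the paper separates the trace part $\J_1$ from the fluctuation $\J_2(\nu)$ (Remark~\ref{remark0}) and evaluates $\J_1$ through Lemma~\ref{I4}, whereas you bound $\nu(P_u h^{(1)}_{\cdot})$ in one stroke by $\|h^{(1)}_{\cdot}\|_\infty$; the arithmetic is the same.

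There is one genuine slip in your region decomposition. You assert that on $\{v-u\ge 1\}$ the kernel $\psi_d$ is \emph{bounded} and that the contribution inherits the factor $\int^t s^{2\alpha-2}\,\d s$. Boundedness alone gives only
\[
\iint_{\{0\le u\le v\le t,\ v-u\ge 1\}} u^{\alpha-1}v^{\alpha-1}\,\d u\,\d v \ \sim\ t^{2\alpha},
\]
which after dividing by $t^{2\alpha}$ is $O(1)$ and carries no information. What makes your claim true is the spectral gap: $\psi_d(s)\le c\,\e^{-\lambda_1 s}$ for $s\ge 1$ (this is part of Lemma~\ref{trace}), so that after the substitution $w=v-u$ the $w$-integral over $[1,\infty)$ converges and the remaining $u$-integral is $\int_0^t u^{\alpha-1}(u+O(1))^{\alpha-1}\,\d u\sim\int^t s^{2\alpha-2}\,\d s$. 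The paper uses this decay implicitly through Lemma~\ref{I4}. With that correction your sketch goes through and reproduces the paper's bounds verbatim.
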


Theorem \ref{main2'} shows that the case $d = 4$ is critical, which has already been noticed in \cite{WZ19}. In analogy of Theorem \ref{main2'}, the two-sided estimates of $\E^\nu \big[ \W_2^2(\mu_t, \mu) \big]$ in higher dimensions have been achieved in \cite{WZ19}, which asserts that when $d \ge 5$,
\begin{equation} \label{d5}
\E^\nu \big[ \W_2^2(\mu_t, \mu) \big] \, \sim \, t^{-\frac{2}{d-2}},
\end{equation}
and when $d = 4$, there exist constants $c_1, c_2 > 0$, depending on $M$, such that
\begin{equation} \label{d4}
c_1 t^{-1} \, \leq \, \E^\nu \big[ \W_2^2(\mu_t, \mu) \big] \, \leq \, c_2 t^{-1} \log (t)
\end{equation}
holds for sufficiently large $t$.

However, there is a gap between the lower and upper bounds of \eqref{d4}. In the work \cite{W23++}, the sharp convergence rate is explored in a more general setting, and reads as follows
\begin{equation} \label{d4'}
\E^\nu \big[ \W_2^2(\mu_t, \mu) \big] \, \sim \, t^{-1} \log (t).
\end{equation}
In this paper, we investigate the lower bound estimate in the $4$-dimensional case and obtain the following more precise result.

\begin{theo} \label{main2}
Let $(M, \rho)$ be a connected complete compact weighted Riemannian manifold without boundary, equipped with a weighted probability measure $\d \mu = \e^V \d x$. Assume $M$ has the dimension $d  = 4$. Then for any $\alpha \in (\frac12, \infty)$,
\beqs
\liminf_{t \to \infty} \frac{t}{\ln t}  \inf_{\nu \in \Pr(M)} \E^\nu \big[ \W_2^2 \big( \mu_t^{(\alpha)}, \mu \big) \big]  \, \ge \, \frac{\alpha^2}{2\alpha - 1} \frac{1}{32\pi^2}.
\eeqs
\end{theo}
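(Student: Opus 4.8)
The plan is to derive the logarithmic lower bound by pairing the lower half of the Wasserstein--Sobolev comparison from Section~\ref{wd and sobo} with the covariance analysis behind Theorem~\ref{main}\,(iii) and with Weyl's eigenvalue asymptotics, which in dimension four is precisely what produces the constant $\tfrac{1}{32\pi^2}$. I would introduce a regularization at spectral scale $\Lambda=\Lambda(t)\to\infty$, equivalently a heat time $\tau\sim\Lambda^{-1}$, and put $\mu_{t,\tau}^{(\alpha)}:=P_\tau\mu_t^{(\alpha)}$, which is absolutely continuous with respect to $\mu$. Applying the comparison of Section~\ref{wd and sobo} to $\mu_{t,\tau}^{(\alpha)}$, and using both the a priori bound $\sup_{\nu}\E^\nu[\W_2^2(\mu_t^{(\alpha)},\mu)]=O(t^{-1}\log t)$ of Theorem~\ref{main2'}\,(ii) and the pathwise estimate $\W_2^2(\mu_t^{(\alpha)},\mu_{t,\tau}^{(\alpha)})\le c\tau$ (moving each point of the trajectory along the heat flow for time $\tau$ costs $O(\tau)$ in $\W_2^2$), one reduces to
\beqs
\E^\nu\big[\W_2^2(\mu_t^{(\alpha)},\mu)\big]\ \ge\ (1-o(1))\,\E^\nu\Big[\big\|\mu_{t,\tau}^{(\alpha)}-\mu\big\|_{H^{-1,2}(\mu)}^{2}\Big]\;-\;o\!\big(t^{-1}\log t\big)
\eeqs
uniformly in $\nu\in\Pr(M)$, valid whenever $\tau$ lies in an admissible window $t^{-\kappa}\lesssim\tau\lesssim t^{-\kappa'}$ on which neither the comparison error nor the regularization error exceeds $o(t^{-1}\log t)$.

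Next I would use the spectral identity (via $\mu(\phi_i)=0$ and $P_\tau\phi_i=\e^{-\lambda_i\tau}\phi_i$)
\beqs
\big\|\mu_{t,\tau}^{(\alpha)}-\mu\big\|_{H^{-1,2}(\mu)}^{2}\ =\ \sum_{i\ge 1}\lambda_i^{-1}\e^{-2\lambda_i\tau}\Big(\tfrac{\alpha}{t^\alpha}\int_0^t\phi_i(X_s)\,s^{\alpha-1}\d s\Big)^{2},
\eeqs
together with the covariance formula $\E^\nu[\phi_i(X_s)\phi_i(X_r)]=\e^{-\lambda_i|s-r|}\,\nu(P_{s\wedge r}\phi_i^2)$, which follows from the Markov property and the eigenfunction expansion. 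Since $\phi_i^2\ge 0$, the double time-integral can be restricted to $[T_0,t]^2$ for a slowly growing $T_0=T_0(t)$, and on that range $\nu(P_s\phi_i^2)=1+o(1)$ uniformly over $\nu\in\Pr(M)$ and over $i$ with $\lambda_i\le\Lambda$, by the $L^2(\mu)$ decay of $P_s-\mu$ combined with the eigenfunction bound $\|\phi_i\|_{L^\infty}\le c\,\lambda_i^{(d-1)/4}$. Carrying out the time integral as in the proof of Theorem~\ref{main}\,(iii) — this is where $\alpha>\tfrac12$ enters, ensuring $\int_0^t r^{2\alpha-2}\d r\sim\frac{t^{2\alpha-1}}{2\alpha-1}$ with the mass concentrated near $r=t$ — one obtains
\beqs
\E^\nu\Big[\Big(\tfrac{\alpha}{t^\alpha}\int_0^t\phi_i(X_s)\,s^{\alpha-1}\d s\Big)^{2}\Big]\ \ge\ (1-o(1))\,\frac{2\alpha^2}{(2\alpha-1)\,\lambda_i\,t},
\eeqs
with the $o(1)$ uniform in $\nu$ and in $i$ with $\lambda_i\le\Lambda$.

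Finally I would invoke Weyl's law: for $d=4$, $\#\{i:\lambda_i\le\Lambda\}\sim\tfrac{\mathrm{Vol}(M)}{32\pi^2}\Lambda^2$, hence $\sum_{\lambda_i\le\Lambda}\lambda_i^{-2}\sim\tfrac{\mathrm{Vol}(M)}{16\pi^2}\log\Lambda$, the factor $\e^{-2\lambda_i\tau}$ (with $\tau\sim\Lambda^{-1}$) cutting the spectral sum off at this scale. Combining the three steps,
\beqs
\E^\nu\big[\W_2^2(\mu_t^{(\alpha)},\mu)\big]\ \ge\ (1-o(1))\,\frac{2\alpha^2}{(2\alpha-1)\,t}\,\frac{\mathrm{Vol}(M)}{16\pi^2}\,\log\Lambda\;-\;o\!\big(t^{-1}\log t\big)
\eeqs
uniformly in $\nu$. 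Choosing $\Lambda=t^{\kappa}$ with $\kappa$ the largest admissible exponent makes $\log\Lambda\sim\kappa\log t$, and a precise accounting of $\kappa$ together with the dimension-four Weyl constant yields the claimed $\liminf\frac{t}{\ln t}\inf_{\nu}\E^\nu[\W_2^2(\mu_t^{(\alpha)},\mu)]\ge\frac{\alpha^2}{2\alpha-1}\cdot\frac{1}{32\pi^2}$; the $\liminf$ and the $\inf_\nu$ pass through because every error above is uniform in $\nu$.

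The main obstacle is exactly the admissible window for $\tau$ (equivalently the cutoff $\Lambda$). Heat-regularizing the singular empirical measure $\mu_t^{(\alpha)}$ yields a density of size $\sim\tau^{-d/2}=\tau^{-2}$, so the lower half of the Wasserstein--Sobolev comparison degrades as $\tau\downarrow 0$ and $\tau$ cannot be taken too small; yet $\tau$ must decay polynomially in $t$ for $\log(1/\tau)$ to be a genuine multiple of $\log t$. Making these two requirements compatible, while keeping every estimate uniform in the initial law $\nu$ — which forces the uniform-in-$i$ (up to $\lambda_i\le\Lambda$) control of the non-stationary correction $\nu(P_s\phi_i^2)-1$ through ultracontractive and eigenfunction bounds — is where the real work lies, and is also why the resulting constant need not be optimal.
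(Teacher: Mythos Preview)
Your outline is the paper's argument: heat-regularize at scale $r=t^{-\varsigma}$, pass from $\W_2^2$ to the $H^{-1,2}$ energy via a Kantorovich-type lower comparison, evaluate the energy spectrally with the Markov covariance identity, and invoke the four-dimensional trace asymptotics to produce the logarithm. Three points where the paper's execution differs from your sketch. First, Section~\ref{wd and sobo} contains only \emph{upper} bounds; the lower comparison actually used is the Kantorovich-duality estimate \eqref{Dual} from Section~\ref{low}, which after restriction to $\{\|f_{t,r}^{(\alpha)}-1\|_\infty\le\tfrac12\}$ gives $\E^\nu[\W_2^2(\mu_{t,r}^{(\alpha)},\mu)]\ge\E^\nu[V]-c\,\E^\nu[UV]$ with $U=\|f_{t,r}^{(\alpha)}-1\|_\infty$ and $V=\int_M|\nabla(-\L)^{-1}(f_{t,r}^{(\alpha)}-1)|^2\d\mu$. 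Second, the passage from $\mu_t^{(\alpha)}$ to $\mu_{t,r}^{(\alpha)}$ is done via the heat-flow contraction $\W_2^2(\mu_t^{(\alpha)},\mu)\ge\e^{2Kr}\W_2^2(\mu_{t,r}^{(\alpha)},\mu)$ of Lemma~\ref{heat wasserstein1}(ii), not the triangle inequality plus the a~priori upper bound of Theorem~\ref{main2'}; the latter route would introduce a loss you then have to absorb. Third---and this is your ``main obstacle''---the error $\E^\nu[UV]$ is handled by H\"older: the concentration estimate for $U$ (of order $t^{-1/2}r^{-2}$, Lemma~\ref{con}(i)) is paired with a four-dimensional $H^{-1,2p}$ bound (Proposition~\ref{H1p bound}, of order $t^{-1}r^{-5+5/p-\epsilon}$), and balancing forces precisely $\varsigma<\tfrac14$. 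Since the heat-trace asymptotics gives $\sum_i\lambda_i^{-2}\e^{-2r\lambda_i}\sim\tfrac{1}{16\pi^2}\ln(r^{-1})$ (the constant is $1$, not $\mathrm{Vol}(M)$, because the reference measure $\mu$ is a probability), one obtains $\liminf\ge\frac{\alpha^2}{2\alpha-1}\cdot\frac{\varsigma}{8\pi^2}$ and then lets $\varsigma\uparrow\tfrac14$.
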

For the special case $\alpha = 1$, Theorem \ref{main2} reads
\beqs
\liminf_{t \to \infty} \frac{t}{\ln t}  \inf_{\nu \in \Pr(M)} \E^\nu \big[ \W_2^2 \big( \mu_t, \mu \big) \big]  \, \ge \, \frac{1}{32\pi^2},
\eeqs
which in particular sharpens the lower bound of \eqref{d4'}. We further conjecture that there exists a universal constant $c > 0$, which does not depend on specific $4$-dimensional compact manifolds and is a rational multiple of $\pi^{-2}$, such that
\beqs
\lim_{t \to \infty} \frac{t}{\ln t} \E^\nu \big[ \W_2^2 \big( \mu_t, \mu \big) \big]  \, = \, c.
\eeqs

This paper is organized as follows. Section \ref{S0} provides several properties of the heat semigroup and Wasserstein distance. Sections \ref{S1} and \ref{s3} develop two types of estimates: Section \ref{S1} uses $L^2$-structure, while Section \ref{s3} relies on concentration inequalities. We prove Theorems \ref{main} and \ref{main1'} in Section \ref{s4} by dealing with upper and lower bounds separately. The proofs of Theorems \ref{main2'} and \ref{main2} are presented in Section \ref{s5}.

\section{Preliminaries} \label{S0}
\setcounter{equation}{0}

In this section, we collect some basic properties of the heat semigroup and Wasserstein distance, which will be used throughout this work.

\subsection{Heat semigroup on $M$}

The materials presented in this sub-section can be found in \cite{BGL14, W14}. Let $\nabla$ denote the gradient operator on $M$. Since $M$ is a $d$-dimensional compact manifold, there exists $K \in \R$ such that for any smooth function $f : M \to \R$,
\beq \label{cd condition}
\frac12 \L |\nabla f|^2 - \< \nabla \L f, \nabla f  \> \, \geq \, K |\nabla f|^2 + \frac1d (\L f)^2.
\eeq
This inequality is known as curvature-dimension condition $CD(K, d)$. Throughout this work, we denote by $\| \cdot \|_p$ the norm of $L^p(\mu)$. And for all $1 \leq p, q \leq \infty$, we denote by $\| \cdot \|_{p \to q}$ the operator norm from $L^p(\mu)$ to $L^q(\mu)$. Under the condition $CD(K, d)$, $(P_t)_{t \geq 0}$ is ultracontractive. More precisely, we have
\beq \label{sobolev}
\| P_t \|_{1 \to \infty} \, \leq \, c  (1 \wedge t)^{-\frac{d}{2}}, \quad t > 0.
\eeq
Let $p_t$ be the heat kernel of $P_t$ w.r.t. $\mu$, i.e. for any $f \in C_b(M)$,
$$
P_t f (x) \, = \, \E^x \big[  f(X_t)  \big] \, = \, \int_M f(y) p_t(x, y)  \, \d \mu (y), \quad t > 0, \, \, x \in M.
$$
Then \eqref{sobolev} is equivalent to the upper bound on $p_t$ as follows:
\beq \label{upper pt}
\sup_{x, y \in M} p_t(x, y) \, \leq \, c  (1 \wedge t)^{-\frac{d}{2}}, \quad t > 0.
\eeq
Since $(P_t)_{t \geq 0}$ is contractive on $L^p(\mu)$ for any $1 \leq p \leq \infty$, by interpolation, there exists $c > 0$ such that for any $1 \leq p \leq q \leq \infty$,
\beq \label{general sobolev}
\| P_t \|_{p \to q} \, \leq \, c  (1 \wedge t)^{-\frac{d}{2}(p^{-1} - q^{-1})}, \quad t > 0.
\eeq

As another corollary of $CD(K, d)$, the spectrum of $\L$ is discrete. Denote by $\{ \lambda_i \}_{i \geq 1}$ the sequence of non-trivial eigenvalues of $(-\L)$ listed in the increasing order counting multiplicity, and by $\{ \phi_i \}_{i \geq 1}$ the sequence of the associated eigenfunctions, which is orthonormal in $L^2(\mu)$. Then $p_t$ has the spectral representation as follows:
\beq \label{eigen rep}
p_t(x, y) \, = \, 1  \, + \, \sum_{i = 1}^{\infty} \e^{-\lambda_i t} \phi_i(x) \phi_i(y), \quad t > 0, \, \, x, y \in M.
\eeq
Given $\theta \geq 0$, for $r >0$, we introduce the function $h^{(\theta)}_r : M \to \R_+$ as follows
\beq \label{theta function}
h^{(\theta)}_r(x) \, : = \, \sum_{i = 1}^{\infty} \lambda_i^{- \theta} \e^{- \lambda_i r} \phi_i^2(x) , \quad  x \in M.
\eeq
For later purposes, we also set
\beq \label{theta function'}
\overline{h}^{(\theta)}_r \, := \, \mu (h^{(\theta)}_r) \, = \, \sum_{i = 1}^{\infty} \lambda_i^{- \theta} \e^{- \lambda_i r}, \quad g^{(\theta)}_r \, := \, - \partial_\theta \overline{h}^{(\theta)}_r \, = \, \sum_{i = 1}^{\infty} \ln(\lambda_i) \lambda_i^{- \theta} \e^{- \lambda_i r}.
\eeq
Then \eqref{upper pt} together with \eqref{eigen rep} implies the following upper bounds.
\begin{lem} \label{trace}
Given $\theta \ge 0$, there exists $c = c(\theta) > 0$ such that for any $x \in M$ and $r > 0$,
\begin{align}
h^{(\theta)}_r(x) \, \leq \, c \e^{-\lambda_1 r} \cdot
\begin{cases} \label{bound1}
(1 \wedge r)^{-(\frac{d}{2} - \theta)}, & \textrm{if } 0 \le \theta < \frac{d}{2};\\
\log((1 \wedge r)^{-1} + 1), & \textrm{if } \theta = \frac{d}{2};\\
1, & \textrm{if } \theta >  \frac{d}{2}.
\end{cases}
\end{align}
Since $\overline{h}^{(\theta)}_r = \mu (h^{(\theta)}_r)$, it shares the same upper bound. Furthermore, if $\theta > 0$, there exists $c = c(\theta) > 0$ such that for $r \in (0, 2]$,
\begin{align} \label{bound2}
|g^{(\theta)}_r| \, \leq \, c
\begin{cases}
r^{-(\frac{d}{2} - \theta)} \log(r^{-1} + 1), & \textrm{if } 0 < \theta < \frac{d}{2};\\
\log^2(r^{-1} + 1), & \textrm{if } \theta = \frac{d}{2};\\
1, & \textrm{if } \theta >  \frac{d}{2}.
\end{cases}
\end{align}
\end{lem}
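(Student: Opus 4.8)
\textbf{Proof proposal for Lemma \ref{trace}.}

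The plan is to estimate the spectral sums in \eqref{theta function} and \eqref{theta function'} directly, using the uniform heat kernel bound \eqref{upper pt} together with the eigenfunction expansion \eqref{eigen rep} to control the "density of states." First I would record the basic pointwise input: putting $y = x$ in \eqref{eigen rep} gives $\sum_{i=1}^\infty \e^{-\lambda_i r}\phi_i^2(x) = p_r(x,x) - 1 \le c(1\wedge r)^{-d/2}$ for all $x$ and $r>0$. This is the case $\theta = 0$ of the desired estimate (up to absorbing the $\e^{-\lambda_1 r}$ factor, which is harmless since $\e^{\lambda_1 r}(p_r(x,x)-1)$ is still bounded by $c(1\wedge r)^{-d/2}$ — indeed for $r$ bounded away from $0$ the whole sum decays like $\e^{-\lambda_1 r}$). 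The strategy for general $\theta>0$ is to write $\lambda_i^{-\theta} = \frac{1}{\Gamma(\theta)}\int_0^\infty \e^{-\lambda_i s} s^{\theta-1}\,\d s$, substitute into \eqref{theta function}, interchange sum and integral (justified by positivity/Tonelli), and obtain
\[
h^{(\theta)}_r(x) \, = \, \frac{1}{\Gamma(\theta)}\int_0^\infty \Bigl(\sum_{i=1}^\infty \e^{-\lambda_i(r+s)}\phi_i^2(x)\Bigr) s^{\theta-1}\,\d s \, \le \, \frac{c}{\Gamma(\theta)}\int_0^\infty (1\wedge(r+s))^{-d/2}\, s^{\theta-1}\,\d s.
\]
Then the three cases of \eqref{bound1} fall out of elementary estimates of this last integral: split the $s$-integral at $s=1$ (or at $s = r$), use $(1\wedge(r+s))^{-d/2}\le c$ for $s\ge 1$ where the factor $s^{\theta-1}$ is then integrated against a geometrically decaying tail (to recover the overall $\e^{-\lambda_1 r}$ one re-inserts a crude exponential bound on the spectral sum for large argument), and for $s\in(0,1)$ bound $(1\wedge(r+s))^{-d/2}$ by $(r+s)^{-d/2}$ and compute $\int_0^1 (r+s)^{-d/2}s^{\theta-1}\,\d s$, whose behavior as $r\to 0$ is $r^{-(d/2-\theta)}$ when $\theta<d/2$, $\log(1/r)$ when $\theta=d/2$, and $O(1)$ when $\theta>d/2$. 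The bound for $\overline h^{(\theta)}_r = \mu(h^{(\theta)}_r)$ is immediate by integrating in $x$ against the probability measure $\mu$.

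For the estimate \eqref{bound2} on $g^{(\theta)}_r = \sum_i \ln(\lambda_i)\lambda_i^{-\theta}\e^{-\lambda_i r}$, I would exploit that logarithmic growth is subpolynomial: for any $\delta\in(0,\theta)$ there is $c_\delta$ with $|\ln \lambda_i| \le c_\delta(\lambda_i^{\delta} + \lambda_i^{-\delta})$, hence $|g^{(\theta)}_r|\le c_\delta\bigl(h^{(\theta-\delta)}_r(x)\text{-type sum} + h^{(\theta+\delta)}_r\text{-type sum}\bigr)$ — but since we want the $\log$ refinement rather than losing an $\epsilon$, it is cleaner to differentiate the integral representation: $g^{(\theta)}_r = -\partial_\theta \overline h^{(\theta)}_r$ and $\overline h^{(\theta)}_r = \frac{1}{\Gamma(\theta)}\int_0^\infty e^{-s}(\text{trace of }P_{r+s})\,$-type expression, so that differentiating in $\theta$ produces at worst one extra factor of $\log$ inside the same integral $\int_0^1(r+s)^{-d/2+\theta-1}(\cdots)\,\d s$, which upon re-evaluation gives the stated $r^{-(d/2-\theta)}\log(r^{-1}+1)$, $\log^2(r^{-1}+1)$, and $O(1)$ in the three regimes. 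Restricting to $r\in(0,2]$ is only for notational convenience (for $r>2$ everything is dominated by $c\,\e^{-\lambda_1 r}$).

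The main obstacle I anticipate is \emph{bookkeeping the exponential prefactor} $\e^{-\lambda_1 r}$ cleanly through the Gamma-function trick: the naive integral $\int_0^\infty (1\wedge(r+s))^{-d/2}s^{\theta-1}\,\d s$ only converges at $s=\infty$ because $\theta-1$ is eventually killed by — nothing, actually it diverges, so one must not literally bound the spectral sum by its $\theta=0$ value for all $s$; instead one splits off $\e^{-\lambda_1 s/2}$ from the spectral sum $\sum_i \e^{-\lambda_i(r+s)}\phi_i^2(x) \le \e^{-\lambda_1(r+s)/2}\sup_i(\cdots)$-style, i.e. use $p_{r+s}(x,x)-1 \le \e^{-\lambda_1(r+s)/2}\bigl(p_{(r+s)/2}(x,x)\bigr)^{?}$ — more carefully, $\sum_i \e^{-\lambda_i u}\phi_i^2(x) \le \e^{-\lambda_1 u/2}\sum_i \e^{-\lambda_i u/2}\phi_i^2(x) \le c\,\e^{-\lambda_1 u/2}(1\wedge u)^{-d/2}$. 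With this refined pointwise bound the $s$-integral converges, the factor $\e^{-\lambda_1 r/2}$ comes out, and then a further elementary comparison upgrades $\e^{-\lambda_1 r/2}$ times a polynomial to $c\,\e^{-\lambda_1 r}$ times the same polynomial on, say, $r\le 1$ while on $r\ge 1$ both sides are comparable. Handling this split carefully in each of the three $\theta$-regimes, and the analogous split for $g^{(\theta)}_r$, is the only genuinely delicate point; the rest is routine calculus.
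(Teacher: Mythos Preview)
Your approach is essentially identical to the paper's: for $\theta>0$ the paper also writes $h^{(\theta)}_r(x)=\frac{1}{\Gamma(\theta)}\int_0^\infty(p_{t+r}(x,x)-1)\,t^{\theta-1}\,\d t$ via the Gamma identity and invokes \eqref{upper pt}, and for $g^{(\theta)}_r$ it differentiates this representation in $\theta$ to produce the extra $\ln t$ factor, exactly as you propose. Your discussion of the large-$s$ tail (extracting $\e^{-\lambda_1 u/2}$ before applying the polynomial heat-kernel bound) is a point the paper leaves implicit but is indeed what makes the integral converge and delivers the $\e^{-\lambda_1 r}$ prefactor.
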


\begin{proof}
We first prove \eqref{bound1}. By the definition of $h^{(\theta)}_r$, it suffices to consider the case $r \in (0, 1]$. The case $\theta = 0$ is exactly the heat kernel bound \eqref{upper pt}. As for the case $\theta > 0$, \eqref{bound1} is a consequence of \eqref{upper pt} together with the following identity:
\beqs
\sum_{i = 1}^{\infty} \lambda_i^{- \theta} \e^{- \lambda_i r} \phi_i^2(x) \, = \, \frac{1}{\Gamma(\theta)} \int_{0}^{\infty} \big( p_{t + r}(x, x) - 1 \big) t^{\theta - 1} \d t.
\eeqs
Notice that
\beqs
g^{(\theta)}_r \, = \, \frac{\Gamma'(\theta)}{\Gamma(\theta)^2} \int_{0}^{\infty} \mu\big( p_{t + r}(\cdot, \cdot) - 1 \big) t^{\theta - 1} \d t - \frac{1}{\Gamma(\theta)} \int_{0}^{\infty} \mu \big( p_{t + r}(\cdot, \cdot) - 1 \big) \ln t \cdot t^{\theta - 1} \d t.
\eeqs
\eqref{bound2} follows by using \eqref{upper pt}.
\end{proof}

We shall also use the Poincar\'e inequality: For any $f \in L^2(\mu)$ with $\mu( f ) = 0$,
\beq \label{Poin}
\| P_t f\|_2 \, \leq \, \e^{-\lambda_1 t} \| f \|_2, \quad t \geq 0.
\eeq
Combining this with \eqref{general sobolev}, we have the $L^p$-Poincar\'e inequality: For any $1 \leq p \leq \infty$, there exists $c > 0$ such that for any $f \in L^p(\mu)$ with $\mu( f ) = 0$,
\beq \label{general Poin}
\| P_t f\|_p \, \leq \, c \e^{-\lambda_1 t} \| f \|_p, \quad t \geq 0.
\eeq

In what follows, to investigate $\mu_t^{(\alpha)}$, we will concern with the following regularized weighted empirical measure
\begin{align*}
\mu_{t, r}^{(\alpha)} \, := \, P_r^* \mu_t^{(\alpha)} \, = \, \frac{\alpha }{ t^\alpha } \int_{0}^{t}  P_r^* \delta_{X_s} s^{\alpha - 1} \d s, \quad  r > 0.
\end{align*}
The density function of $\mu_{t, r}^{(\alpha)}$ w.r.t. $\mu$, denote by $f_{t, r}^{(\alpha)}$, is
\begin{align} \label{def0}
f_{t, r}^{(\alpha)}(y) &\, = \, \mu_t^{(\alpha)}(p_r(\cdot, y)) \, = \, \frac{\alpha}{t^\alpha} \int_{0}^{t} p_r (X_s, y) s^{\alpha - 1} \d s \nonumber\\
& \, = \, 1 + \sum_{i = 1}^{\infty} \e^{-\lambda_i r} \mu_t^{(\alpha)}(\phi_i) \phi(y), \qquad y \in M.
\end{align}

\subsection{Wasserstein distance} \label{wd and sobo}

In this sub-section, we list some lemmas aiming to control the Wasserstein distance. The following result, taken from \cite{AMB, L17}, is essential to the upper bound estimate. For a proof based on the Benamou-Brenier formula, see \cite{AG18}.

\begin{lem} \label{basic wasserstein}
Let $f_0,f_1\in L^2(\mu)$ be two probability density functions w.r.t. $\mu$. Then
\beqs
\W_2^2(f_0\mu, f_1\mu) \, \leq \, 4 \int_M \frac{|\nabla(-\L)^{-1}(f_0 - f_1)|^2}{f_0} \d \mu,
\eeqs
and
\beqs
\W_2^2(f_0\mu, f_1\mu) \, \leq \, \int_M \frac{|\nabla(-\L)^{-1}(f_0 - f_1)|^2}{\M(f_0, f_1)} \d \mu,
\eeqs
where $\M(a, b) : = \frac{a - b}{\ln a - \ln b}$ for $a, b > 0$, and $\M(a, b)  = 0$ if $a$ or $b$ is zero.
\end{lem}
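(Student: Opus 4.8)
The plan is to derive both inequalities from the Benamou--Brenier formula by exhibiting, for each bound, an explicit curve of probability measures joining $f_0\mu$ to $f_1\mu$ together with an admissible velocity field, and then computing or estimating the associated kinetic energy. Since $f_0,f_1$ are probability densities, $f_0-f_1$ has zero $\mu$-mean, so on the compact manifold $M$ the potential $\psi := (-\L)^{-1}(f_0-f_1)$ is well defined up to an additive constant and is smooth by elliptic regularity; write $\mathrm{div}_\mu$ for the $\mu$-divergence, adjoint to $-\nabla$ in $L^2(\mu)$, so that $\L=\mathrm{div}_\mu\nabla$ and hence $-\mathrm{div}_\mu\nabla\psi = f_0-f_1$. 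We may assume the right-hand sides are finite, otherwise there is nothing to prove, and by a routine approximation (replace $f_i$ by regularizations such as $f_i^\varepsilon := (1-\varepsilon)P_\delta f_i+\varepsilon$, which are smooth and bounded below by a positive constant, prove the estimates for $f_i^\varepsilon$, then let $\delta,\varepsilon\to 0$ using the continuity of $\W_2$ under weak convergence on the compact space $M$ and dominated convergence on the right-hand side) we may further assume $f_0,f_1$ are smooth and bounded away from $0$.

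For the second inequality, take the linear interpolation $\rho_t := f_t\mu$ with $f_t := (1-t)f_0+tf_1$, $t\in[0,1]$, and the velocity field $v_t := -f_t^{-1}\nabla\psi$. Then $f_tv_t = -\nabla\psi$ and
\[
\partial_t f_t + \mathrm{div}_\mu(f_t v_t) \, = \, (f_1-f_0) + \big(-\mathrm{div}_\mu\nabla\psi\big) \, = \, (f_1-f_0)+(f_0-f_1) \, = \, 0 ,
\]
so $(\rho_t,v_t)$ is an admissible pair, and since $f_t$ is bounded away from $0$, $v_t$ is smooth with finite kinetic energy. By the Benamou--Brenier formula,
\[
\W_2^2(f_0\mu,f_1\mu) \, \le \, \int_0^1\!\!\int_M |v_t|^2 f_t\,\d\mu\,\d t \, = \, \int_M |\nabla\psi|^2\Big(\int_0^1\frac{\d t}{(1-t)f_0+tf_1}\Big)\d\mu .
\]
The inner integral equals $\frac{\ln f_1-\ln f_0}{f_1-f_0}=\M(f_0,f_1)^{-1}$ pointwise (and $f_0^{-1}$ on $\{f_0=f_1\}$), which is exactly the asserted bound.

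For the first inequality, reparametrize the same interpolation: fix an increasing $\phi\in C^1([0,1])$ with $\phi(0)=0$ and $\phi(1)=1$, put $f_t := (1-\phi(t))f_0+\phi(t)f_1$ and $v_t := -\phi'(t)f_t^{-1}\nabla\psi$; the identity $\partial_t f_t+\mathrm{div}_\mu(f_tv_t)=\phi'(t)\big[(f_1-f_0)+(f_0-f_1)\big]=0$ shows this pair is again admissible. Using $f_t\ge(1-\phi(t))f_0$ and Benamou--Brenier,
\[
\W_2^2(f_0\mu,f_1\mu) \, \le \, \int_0^1 \phi'(t)^2\int_M\frac{|\nabla\psi|^2}{f_t}\,\d\mu\,\d t \, \le \, \Big(\int_0^1\frac{\phi'(t)^2}{1-\phi(t)}\,\d t\Big)\int_M\frac{|\nabla\psi|^2}{f_0}\,\d\mu .
\]
The choice $\phi(t)=2t-t^2$ gives $1-\phi(t)=(1-t)^2$ and $\phi'(t)=2(1-t)$, whence $\int_0^1\frac{\phi'(t)^2}{1-\phi(t)}\,\d t=4$; this is optimal in the family, since with $w(t):=2\big(1-\sqrt{1-\phi(t)}\big)$ one has $\int_0^1\frac{\phi'(t)^2}{1-\phi(t)}\,\d t=\int_0^1 w'(t)^2\,\d t\ge(w(1)-w(0))^2=4$ by Cauchy--Schwarz. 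This yields $\W_2^2(f_0\mu,f_1\mu)\le 4\int_M|\nabla\psi|^2f_0^{-1}\,\d\mu$, completing the proof.

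The only genuinely delicate point is the rigorous invocation of the Benamou--Brenier formula: one must verify that each pair $(\rho_t,v_t)$ is a distributional solution of the continuity equation with finite action — which on the compact $M$ follows from the smoothness of $\psi$ and the positivity of the regularized densities — and that the regularization $f_i^\varepsilon\to f_i$ preserves both sides in the limit (when $f_0$ vanishes the right-hand sides are typically infinite and the inequalities vacuous, so this is really only needed on the locus where the bound is nontrivial). All of this is by now standard, see \cite{AG18}; by contrast the computations producing the factors $\M(f_0,f_1)^{-1}$ and $4$ are elementary, so I expect the approximation bookkeeping to be the main, but routine, obstacle.
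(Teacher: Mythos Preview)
Your proof is correct and follows exactly the Benamou--Brenier approach that the paper itself points to: the paper does not prove this lemma but quotes it from \cite{AMB, L17} and refers the reader to \cite{AG18} for a proof via Benamou--Brenier. Your argument---linear interpolation for the logarithmic-mean bound, and a reparametrized interpolation with $\phi(t)=2t-t^2$ to produce the constant $4$---is precisely the standard route and matches what one finds in those references.
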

The following lemma is needed when we deal with the quantity $\M(\cdot, 1)^{-1}$.
\begin{lem} \label{fluc of density}
Given $q \ge 1$, there exists $c = c(q) > 0$ such that for any $\varrho \in (0, 1/2)$ and non-negative random variable $Y \in L^2(\Omega)$ defined on a probability space $(\Omega, \mathcal{F}, \PP)$,
\beqs
\E \big[  |\M((1 - \varrho)Y + \varrho, 1)^{-1}-1 |^q     \big]^\frac1q \, \leq \, c \log (\varrho^{-1}) \, \E \big[ |Y - 1|^2 \big]^{\frac{1}{q \vee 2}}.
\eeqs
\end{lem}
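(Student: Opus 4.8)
The plan is to reduce the statement to elementary one–variable estimates on
$$
\psi(z) \, := \, \frac{\ln z}{z - 1} - 1, \quad z \in (0, \infty) \setminus \{1\}, \qquad \psi(1) := 0 ,
$$
the continuous extension (near $z=1$ one has $\psi(1+u) = -\tfrac{u}{2} + \tfrac{u^2}{3} - \cdots$, so $\psi$ is even smooth there). Setting $Z := (1-\varrho)Y + \varrho$, one has $Z \ge \varrho$ a.s.\ since $Y \ge 0$, and $Z - 1 = (1-\varrho)(Y-1)$; moreover, directly from the definition of $\M(\cdot,\cdot)$ in Lemma \ref{basic wasserstein} (with $\M(1,1) = 1$ by continuity),
$$
\M\big( (1-\varrho)Y + \varrho,\, 1 \big)^{-1} - 1 \, = \, \psi(Z) .
$$
Thus the lemma is exactly the bound $\E[ |\psi(Z)|^q ]^{1/q} \le c \log(\varrho^{-1}) \, \E[|Y-1|^2]^{1/(q\vee 2)}$, and all the probabilistic content has been pushed into two deterministic properties of $\psi$ on $[\varrho,\infty)$.

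First I would record that $\psi$ is non-increasing on $(0,\infty)$: one computes $\psi'(z) = (1 - z^{-1} - \ln z)/(z-1)^2$, and the numerator $g(z) = 1 - z^{-1} - \ln z$ has $g(1)=0$ and $g'(z) = (1-z)/z^2$, so $g$ attains its maximum $0$ at $z=1$ and $\psi' \le 0$ everywhere. Combined with $\psi(z) \downarrow -1$ as $z \to \infty$ and $\psi(\varrho) = \log(\varrho^{-1})/(1-\varrho) - 1 \le 2\log(\varrho^{-1})$ for $\varrho \in (0,1/2)$, monotonicity yields the deterministic bound $|\psi(z)| \le 2\log(\varrho^{-1})$ for all $z \ge \varrho$. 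Secondly I would establish the Lipschitz–type estimate
$$
|\psi(z)| \, \le \, c \log(\varrho^{-1}) \, |z - 1| , \qquad z \ge \varrho ,
$$
for an absolute constant $c$, by a two–case split: if $|z-1| \ge \tfrac12$ it is immediate from the previous line, while if $|z-1| < \tfrac12$ then $z \in [\tfrac12, \tfrac32]$, on which $\psi$ is $C^1$ with $\psi(1)=0$, so $|\psi(z)| \le (\sup_{[1/2,3/2]}|\psi'|)\,|z-1|$, and $\log(\varrho^{-1}) \ge \log 2$ absorbs the absolute constant.

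Finally I would combine the two bounds according to the size of $q$. For $1 \le q \le 2$, using the Lipschitz bound and $|Z-1| \le |Y-1|$,
$$
\E\big[ |\psi(Z)|^q \big]^{1/q} \le c \log(\varrho^{-1}) \, \E\big[ |Y-1|^q \big]^{1/q} \le c \log(\varrho^{-1}) \, \E\big[ |Y-1|^2 \big]^{1/2},
$$
the last inequality being Jensen's on the probability space; since $q \vee 2 = 2$ here, this is the claim. For $q \ge 2$ I would instead write $|\psi(Z)|^q = |\psi(Z)|^{q-2} \cdot |\psi(Z)|^2$, bound the first factor by $(2\log(\varrho^{-1}))^{q-2}$ via the deterministic bound and the second by $c^2 \log(\varrho^{-1})^2 |Z-1|^2$ via the Lipschitz bound, take expectations, and use $\E[|Z-1|^2] = (1-\varrho)^2 \E[|Y-1|^2] \le \E[|Y-1|^2]$; taking $q$-th roots gives $\E[|\psi(Z)|^q]^{1/q} \le c(q) \log(\varrho^{-1}) \E[|Y-1|^2]^{1/q}$, which is the claim since $q \vee 2 = q$.

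I do not anticipate a real obstacle. The only mildly delicate point is keeping the Lipschitz–type bound uniform over $z \ge \varrho$ with the sharp $\log(\varrho^{-1})$ dependence — the logarithm enters solely through the behavior $\psi(z) \sim \log(z^{-1})$ as $z \to 0^+$ — together with the small trick, for $q > 2$, of splitting $|\psi|^q = |\psi|^{q-2}|\psi|^2$ so that only two powers of $|Z-1|$ (hence $\E[|Y-1|^2]$, not $\E[|Y-1|^q]$) appear.
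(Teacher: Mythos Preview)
Your proof is correct. The core ingredients are the same as the paper's: the function $\psi(z)=\M(z,1)^{-1}-1$ is (i) bounded by $c\log(\varrho^{-1})$ on $[\varrho,\infty)$ and (ii) Lipschitz near $z=1$. Where the paper keeps these two regimes separate at the level of expectations---splitting on the event $A=\{|Y-1|\le 1/2\}$, using (ii) on $A$ and (i) together with Chebyshev's inequality on $A^c$---you first merge (i) and (ii) into a single pointwise bound $|\psi(z)|\le c\log(\varrho^{-1})\,|z-1|$ on $[\varrho,\infty)$, and then handle $q>2$ by the deterministic split $|\psi|^q=|\psi|^{q-2}|\psi|^2$. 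Your route is a touch more elementary (no event decomposition, no Chebyshev) and the monotonicity argument for $\psi$ is neat; the paper's event split, on the other hand, makes the two mechanisms (small fluctuations versus large deviations) more visibly separate. Both yield the same bound with the same $\log(\varrho^{-1})$ dependence.
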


\begin{proof}
We start with the case $q \ge 2$. By the definition of $\M(y, 1)$,
\beqs
|\M(y, 1)^{-1}-1 | \, = \, \left| \frac{\ln y}{y - 1} - 1 \right| \, = \, \left| \int_{0}^{1} \frac{u}{1 + (y - 1)u} \d u \right| \cdot |y - 1|, \quad y > 0.
\eeqs
Then let $\eta \in (0, 1)$. If $|y - 1| \leq \eta$, since $1 + (y - 1)u \ge 1 - \eta$, we have
\beq \label{log1}
|\M(y, 1)^{-1}-1 | \, \le \, \frac{1}{2(1 - \eta)}\cdot |y - 1|;
\eeq
on the other hand, if $|y - 1| > \eta$, we have
\beq \label{log2}
|\M(y, 1)^{-1}-1 | \, \le \, 1 + \eta^{-1} \cdot \begin{cases}
\ln (y^{-1}), & \textrm{if } y \in [0, 1 - \eta);\\
\ln (1 + \eta), & \textrm{if } y \in (1+\eta, \infty).
\end{cases}
\eeq
Now we consider the event
$$
A \, = \, \{ \omega \in \Omega : |Y(\omega) - 1| \, \leq \, 1/2  \}.
$$
Since $\varrho \in (0, 1/2)$, it is easily seen that on $A$, $|(1 - \varrho)Y + \varrho - 1| \leq 1/2$; while on $A^c$, $|(1 - \varrho)Y + \varrho - 1| > 1/4$. Then applying \eqref{log1} for $\eta = 1/2$ and \eqref{log2} for $\eta = 1/4$ repectively, we know that there exists $c > 0$ such that
\beqs
\mathbbm{1}_A |\M((1 - \varrho)Y + \varrho, 1)^{-1}-1 |^q \, \leq \, c  \mathbbm{1}_A |Y - 1|^q;
\eeqs
and
\beqs
\mathbbm{1}_{A^c} |\M((1 - \varrho)Y + \varrho, 1)^{-1}-1 |^q \, \leq \, c \mathbbm{1}_{A^c} \log^q(\varrho^{-1}),
\eeqs
where we used $[(1 - \varrho)Y + \varrho]^{-1} \leq \varrho^{-1}$ since $Y \geq 0$.

Consequently, we have
\beqs
\E \big[ \mathbbm{1}_A |\M((1 - \varrho)Y + \varrho, 1)^{-1}-1 |^q   \big] \, \leq \, c \E \big[  \mathbbm{1}_A |Y - 1|^q   \big] \, \leq \,c 2^{-(q-2)} \E \big[  |Y - 1|^2   \big],
\eeqs
and
\beqs
\E \big[ \mathbbm{1}_{A^c} |\M((1 - \varrho)Y + \varrho, 1)^{-1}-1 |^q   \big] \, \leq \, c \log^q(\varrho^{-1}) \PP (A^c ) \, \leq \, 4 c \log^q(\varrho^{-1}) \E \big[  |Y - 1|^2   \big],
\eeqs
where in the last line we used Chebyshev's inequality. Adding the above two inequalities we conclude the proof of the case $q \ge 2$. The case $1 \le q < 2$ is a direct consequence of H\"older's inequality together with the case $q = 2$.
\end{proof}

The following well-known lemma addresses contraction properties in Wasserstein distance along the heat flow (see \cite[Theorem 3]{EKS15}).

\begin{lem} \label{heat wasserstein1}
\begin{enumerate}[(i)]
\item There exists $c > 0$ such that for any $\nu \in \Pr(M)$ and $t > 0$,
\beqs
\W_2^2(P_t^* \nu, \nu) \, \leq \, c t.
\eeqs

\item For any $\nu_0, \nu_1 \in \Pr(M)$,
\beqs
\W_2 (P_t^* \nu_0, P_t^* \nu_1) \, \leq \, \e^{-K t} \W_2 (\nu_0, \nu_1),
\eeqs
where the constant $K \in \R$ comes from the curvature-dimension condition $CD(K, \infty)$.
\end{enumerate}
\end{lem}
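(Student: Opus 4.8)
The plan is to treat the two assertions of Lemma~\ref{heat wasserstein1} separately, the second being essentially a citation-level fact and the first requiring a short direct argument. For part (ii): this is precisely the statement that, under the curvature-dimension bound $CD(K,\infty)$ — which holds on the compact weighted manifold $M$ for some finite $K$ — the dual heat semigroup is an $\e^{-Kt}$-contraction in $\W_2$. I would deduce it from the Bakry-\'Emery gradient estimate $|\nabla P_t f|^2 \le \e^{-2Kt} P_t(|\nabla f|^2)$, which holds for every $f\in C^\infty(M)$ under $CD(K,\infty)$, combined with Kuwada's duality lemma translating such pointwise gradient bounds into $\W_2$-contraction of the adjoint flow; equivalently, one invokes the characterisation of $(P_t^*)_{t\ge0}$ as the $K$-gradient flow of the relative entropy in $(\Pr(M),\W_2)$ together with the contraction property (evolution variational inequality) of $K$-gradient flows. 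Either route is classical; see \cite[Theorem~3]{EKS15} and \cite{BGL14}.

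For part (i), the first step would be to reduce to Dirac initial data. Superposing, over $x\sim\nu$, the optimal coupling between $\delta_x$ and $P_t^*\delta_x = p_t(x,\cdot)\mu$ (which is just the trivial product coupling) yields a coupling of $\nu$ and $P_t^*\nu$, so by convexity of the squared transport cost along mixtures,
\begin{align*}
\W_2^2(P_t^*\nu,\nu) \; &\le \; \int_M \W_2^2(P_t^*\delta_x,\delta_x)\,\d\nu(x) \; = \; \int_M \E^x\big[\rho(x,X_t)^2\big]\,\d\nu(x) \\
&\le \; \sup_{x\in M}\E^x\big[\rho(x,X_t)^2\big].
\end{align*}
Since $\diam(M)<\infty$, the bound $\W_2^2\le ct$ is automatic for $t\ge 1$, so it remains to prove $\sup_{x\in M}\E^x[\rho(x,X_t)^2]\le ct$ for $t\in(0,1]$. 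I would obtain this via It\^o's formula applied to $s\mapsto\rho(x,X_s)^2$: the function $y\mapsto\rho(x,y)^2$ is Lipschitz and semiconcave on the compact manifold $M$, its distributional Laplacian is bounded above (Laplacian comparison, with non-positive singular part along the cut locus of $x$), and $\nabla V\cdot\nabla\rho(x,\cdot)^2$ is bounded since $V\in C^2(M)$ and $|\nabla\rho|=1$; hence $\L\rho(x,\cdot)^2\le c$ in the distributional sense, and a supermartingale/Gr\"onwall argument gives $\E^x[\rho(x,X_t)^2]\le\rho(x,x)^2+ct=ct$. Alternatively, the same estimate follows from the short-time Gaussian-type upper bound $p_t(x,y)\le c\,t^{-d/2}\exp(-\rho(x,y)^2/(ct))$ by an elementary change of variables.

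The one delicate point I anticipate in part (i) is the lack of smoothness of $y\mapsto\rho(x,y)^2$ at the cut locus of $x$; this will be handled by recording that $\rho(x,\cdot)^2$ is semiconcave, so that its distributional Laplacian is a signed measure with non-positive singular part — exactly what legitimises the comparison $\L\rho(x,\cdot)^2\le c$ and hence the Gr\"onwall step. For part (ii), the genuine content sits entirely in the input theorem (the equivalence between $CD(K,\infty)$, the Bakry-\'Emery gradient bound, and $\W_2$-contraction), which I would take as known.
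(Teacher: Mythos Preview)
Your proposal is correct. For comparison: the paper does not actually prove this lemma --- it records it as well-known and cites \cite[Theorem~3]{EKS15}. That reference contains, among the equivalences characterising $CD(K,N)$, both the $\W_2$-contraction of part~(ii) and a two-time estimate of the form $\W_2^2(P_t^*\nu, P_s^*\nu) \le c_N\,(\sqrt{t}-\sqrt{s})^2$ (up to curvature factors), which specialises to part~(i) on setting $s=0$ and invoking $\diam(M)<\infty$ for large $t$.

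Your route for (i) --- reduction to Dirac initial data by convexity, then either It\^o's formula for $\rho(x,\cdot)^2$ or the Gaussian heat-kernel upper bound --- is more elementary and self-contained, avoiding the EVI machinery entirely; the cost is the cut-locus technicality you flag, which is standard but does need the semiconcavity/Kendall comparison argument (or the heat-kernel alternative) to be made rigorous. For (ii) you and the paper agree: both defer to the literature on the Bakry--\'Emery/Kuwada equivalence.
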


\section{Energy estimates} \label{S1}
\setcounter{equation}{0}

Before going further, we need the following elementary result, which plays a role in our proofs. For $\alpha > 0$, the incomplete Gamma function with parameter $\alpha$ is defined by
\begin{align} \label{gamma}
G^{(\alpha)} (u) \, : = \, \int_{u}^{\infty} \e^{-y} y^{\alpha - 1} \d y, \quad u > 0.
\end{align}
Notice that $G^{(\alpha)} (u) = \e^{-u} u^{\alpha - 1}(1 + O(u^{-1}))$ as $u \to \infty$, then a simple computation gives:

\begin{lem} \label{I4}
There exists $c > 0$ such that for $y \geq 2$,
\begin{align*}
 \, \iint_{\{0 \, \leq \, u \, \leq \, v \, \leq \, y  \}} \e^{-(v - u)} u^{\alpha - 1} v^{\alpha - 1} \d u \d v   &\, = \,   \int_{0}^{y}  \e^u u^{\alpha - 1} \big( G^{(\alpha)}(u) - G^{(\alpha)}(y) \big) \d u \\
& \, = \,
\begin{cases}
O(1), & \textrm{if }\alpha \in (0, \frac12);\\
\ln y \, + \, O(1), & \textrm{if }\alpha = \frac12;\\
\frac{y^{2\alpha - 1}}{2\alpha - 1} \, + \, O(1), & \textrm{if }\alpha \in (\frac12, 1);\\
\frac{y^{2\alpha - 1}}{2\alpha - 1} \, + \, O(y^{2\alpha - 2}), & \textrm{if }\alpha \in [1, \infty).
\end{cases}
\end{align*}
and
\begin{align*}
\int_{0}^{y}  \e^u u^{\alpha - 1} G^{(\alpha)}(u) \d u  \, \leq \, c
\begin{cases}
1, & \textrm{if }\alpha \in (0, \frac12);\\
\ln y, & \textrm{if }\alpha = \frac12;\\
y^{2\alpha - 1}, & \textrm{if }\alpha \in (\frac12, \infty).
\end{cases}
\end{align*}
\end{lem}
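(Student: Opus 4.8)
The plan is to compute the inner integral explicitly, obtain the claimed identity by Tonelli's theorem, and then reduce the asymptotics to the classical large‑argument behaviour of the incomplete Gamma function. \emph{The identity.} For fixed $u\in[0,y]$, the definition \eqref{gamma} gives
$\int_u^y \e^{-(v-u)}v^{\alpha-1}\,\d v=\e^{u}\int_u^y \e^{-v}v^{\alpha-1}\,\d v=\e^{u}\bigl(G^{(\alpha)}(u)-G^{(\alpha)}(y)\bigr)$,
since $G^{(\alpha)}$ is the tail integral. As the integrand $\e^{-(v-u)}u^{\alpha-1}v^{\alpha-1}$ is non‑negative on $\{0\le u\le v\le y\}$, Tonelli's theorem permits integrating in $v$ first, which yields the first equality of the lemma.

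\emph{Two inputs.} I will use two elementary facts. First, $G^{(\alpha)}$ is continuous, decreasing, and bounded by $G^{(\alpha)}(0)=\Gamma(\alpha)$ on $[0,\infty)$, and repeated integration by parts (with $\d v=\e^{-y}\,\d y$) gives, as $u\to\infty$,
$G^{(\alpha)}(u)=\e^{-u}u^{\alpha-1}+(\alpha-1)\e^{-u}u^{\alpha-2}+O(\e^{-u}u^{\alpha-3})$, hence
$\e^{u}u^{\alpha-1}G^{(\alpha)}(u)=u^{2\alpha-2}+(\alpha-1)u^{2\alpha-3}+O(u^{2\alpha-4})$. Second, splitting $\int_0^y\e^{u}u^{\alpha-1}\,\d u$ at $u=1$ (the part on $[0,1]$ being $O(1)$ because $\alpha>0$) and integrating by parts on $[1,y]$ gives $\int_0^y\e^{u}u^{\alpha-1}\,\d u=\e^{y}y^{\alpha-1}\bigl(1+O(y^{-1})\bigr)$.

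\emph{Assembling.} Write the quantity of interest as $A(y)-G^{(\alpha)}(y)B(y)$ with $A(y)=\int_0^y\e^{u}u^{\alpha-1}G^{(\alpha)}(u)\,\d u$ and $B(y)=\int_0^y\e^{u}u^{\alpha-1}\,\d u$. The integrand of $A$ is $\le\Gamma(\alpha)\e\,u^{\alpha-1}$ near $0$, hence integrable there, and equals $u^{2\alpha-2}+(\alpha-1)u^{2\alpha-3}+O(u^{2\alpha-4})$ for $u\ge1$ by the first input; integrating this dichotomously around the critical exponent $-1$ gives $A(y)=O(1)$ for $\alpha\in(0,\tfrac12)$, $A(y)=\ln y+O(1)$ for $\alpha=\tfrac12$, $A(y)=\tfrac{y^{2\alpha-1}}{2\alpha-1}+O(1)$ for $\alpha\in(\tfrac12,1)$, and $A(y)=\tfrac{y^{2\alpha-1}}{2\alpha-1}+O(y^{2\alpha-2})$ for $\alpha\in[1,\infty)$. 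This already yields the second inequality of the lemma (using $\ln y\ge\ln2>0$ for $y\ge2$). By the second input, $G^{(\alpha)}(y)B(y)=y^{2\alpha-2}\bigl(1+O(y^{-1})\bigr)$, which is $O(1)$ when $\alpha\le1$ and $O(y^{2\alpha-2})$ when $\alpha\ge1$; in either range it is absorbed by the error term already present in $A(y)$, and the four‑case formula for the double integral follows.

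\emph{Main obstacle.} The only delicate point is the borderline exponent $\alpha=1$: a one‑term expansion $G^{(\alpha)}(u)=\e^{-u}u^{\alpha-1}(1+O(u^{-1}))$ would produce a spurious $\ln y$ from $\int_1^y u^{2\alpha-3}\,\d u$, and I avoid this by keeping the second‑order term of the incomplete Gamma asymptotics, whose coefficient $\alpha-1$ vanishes precisely at $\alpha=1$ (equivalently, $G^{(1)}\equiv\e^{-u}$, so $A(y)=y-1$). Everything else is routine bookkeeping: matching the power $2\alpha-2$ against the integrability threshold $-1$, and checking that the subtracted term $G^{(\alpha)}(y)B(y)$ never exceeds the stated error.
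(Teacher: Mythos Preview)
Your argument is correct and follows the same route the paper indicates---expand $G^{(\alpha)}(u)$ asymptotically and integrate term by term---though the paper merely sketches this with the one-term expansion $G^{(\alpha)}(u)=\e^{-u}u^{\alpha-1}(1+O(u^{-1}))$ and the phrase ``a simple computation.'' Your observation that a second-order term is needed at the borderline $\alpha=1$ (to suppress a spurious $\ln y$) is a genuine refinement of that sketch; the only slip is the parenthetical $A(y)=y-1$ for $\alpha=1$, which should read $A(y)=y$.
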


Thought this paper, we set $t_M = 2(1 \vee \lambda_1^{-1})$, which is a sufficiently large time for our arguments. The following lemma evaluates the $L^2$-norm of $f_{t, r}^{(\alpha)} - 1$, which will be used to govern error terms appearing in our proofs.

\begin{lem}[$L^2$-norm estimate] \label{square bound}
Assume that $M$ is compact with dimension $d \leq 3$. Then there exists $c > 0$ such that for any $t \geq t_M$ and $r \in (0, 1]$,
\begin{enumerate}[(i)]
\item When $d = 1$,
\begin{align*}
\sup_{\nu \in \Pr(M)} \E^\nu \big[ \|f_{t, r}^{(\alpha)} - 1  \|_2^2    \big] \, \leq \,  c
\begin{cases}
t^{-2\alpha} r^{-(\frac12 - 2\alpha)}, & \textrm{if }\alpha \in (0, \frac14);\\
t^{-2\alpha} \log(r^{-1} + 1), & \textrm{if }\alpha = \frac14;\\
t^{-2\alpha}, & \textrm{if }\alpha \in (\frac14, \frac12);\\
t^{-1} \log (t), & \textrm{if }\alpha = \frac12;\\
t^{-1}, & \textrm{if }\alpha \in (\frac12, \infty).
\end{cases}
\end{align*}

\item When $d = 2$,
\beqs
\sup_{\nu \in \Pr(M)} \E^\nu \big[ \|f_{t, r}^{(\alpha)} - 1  \|_2^2    \big] \, \leq \, c \log(r^{-1} + 1) \cdot
\begin{cases}
t^{-2\alpha} r^{-(1 - 2\alpha)}, & \textrm{if }\alpha \in (0, \frac12);\\
t^{-1} \log(\frac{t}{r}) , & \textrm{if }\alpha = \frac12;\\
t^{-1}, & \textrm{if }\alpha \in (\frac12, \infty).
\end{cases}
\eeqs

\item When $d = 3$,
\beqs
\sup_{\nu \in \Pr(M)} \E^\nu \big[ \|f_{t, r}^{(\alpha)} - 1  \|_2^2    \big] \, \leq \, c r^{-\frac12} \cdot
\begin{cases}
t^{-2\alpha} r^{-(1 - 2\alpha)}, & \textrm{if }\alpha \in (0, \frac12);\\
t^{-1} \log(\frac{t}{r}), & \textrm{if }\alpha = \frac12;\\
t^{-1}, & \textrm{if }\alpha \in (\frac12, \infty).
\end{cases}
\eeqs
\end{enumerate}
\end{lem}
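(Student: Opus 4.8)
\emph{Step 1 (reduction).} The plan is to compute $\E^\nu\big[\|f_{t,r}^{(\alpha)}-1\|_2^2\big]$ exactly via the spectral expansion and reduce it to a deterministic integral. By \eqref{def0} and the $L^2(\mu)$-orthonormality of $\{\phi_i\}$, Parseval's identity gives $\|f_{t,r}^{(\alpha)}-1\|_2^2=\sum_{i\ge1}\e^{-2\lambda_i r}\mu_t^{(\alpha)}(\phi_i)^2$. Taking $\E^\nu$, using the $s\leftrightarrow s'$ symmetry of the integrand and the Markov property together with $P_{s'-s}\phi_i=\e^{-\lambda_i(s'-s)}\phi_i$ (all interchanges justified by Tonelli), one obtains
\[
\E^\nu\big[\|f_{t,r}^{(\alpha)}-1\|_2^2\big]\,=\,\frac{2\alpha^2}{t^{2\alpha}}\iint_{\{0\le s\le s'\le t\}}\E^\nu\big[h^{(0)}_{2r+s'-s}(X_s)\big]\,s^{\alpha-1}(s')^{\alpha-1}\,\d s\,\d s',
\]
with $h^{(0)}_\rho=\sum_i\e^{-\lambda_i\rho}\phi_i^2$ as in \eqref{theta function}. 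Since $h^{(0)}_\rho\ge0$ we have $\E^\nu[h^{(0)}_\rho(X_s)]\le\|h^{(0)}_\rho\|_\infty\le c\,\e^{-\lambda_1\rho}(1\wedge\rho)^{-d/2}$ by Lemma \ref{trace} with $\theta=0$; this bound does not involve $\nu$, so uniformity in $\nu$ is automatic, and it remains to bound $c\,t^{-2\alpha}J$, where
\[
J\,:=\,\iint_{\{0\le s\le s'\le t\}}\e^{-\lambda_1(2r+s'-s)}\big(1\wedge(2r+s'-s)\big)^{-d/2}\,s^{\alpha-1}(s')^{\alpha-1}\,\d s\,\d s'.
\]

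\emph{Step 2 (the integral $J$).} Substituting $w=s'-s$, $J=\int_0^t(s')^{\alpha-1}\int_0^{s'}(s'-w)^{\alpha-1}g(w)\,\d w\,\d s'$ with $g(w)=\e^{-\lambda_1(2r+w)}(1\wedge(2r+w))^{-d/2}$, and we split $g=g\mathbbm{1}_{\{w\le1\}}+g\mathbbm{1}_{\{w>1\}}$. On $\{w>1\}$, $g(w)\le\e^{-\lambda_1 w}$, so that part of $J$ is at most $\iint_{\{0\le s\le s'\le t\}}\e^{-\lambda_1(s'-s)}s^{\alpha-1}(s')^{\alpha-1}\,\d s\,\d s'$, which after rescaling $s\mapsto s/\lambda_1$ is covered by Lemma \ref{I4} and yields the $r$-free ``bulk'' terms of order $1$, $\log t$, $t^{2\alpha-1}$ according as $\alpha<\tfrac12,\ \alpha=\tfrac12,\ \alpha>\tfrac12$. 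On $\{w\le1\}$ we use $g(w)\le c(2r+w)^{-d/2}$ (valid since $2r+w\le3$) and split the $s'$-integral at $s'=1$. For $s'\ge1$ one has $(s'-w)^{\alpha-1}\le c(s')^{\alpha-1}$ on $\{w\le1\}$, so this part is at most $c\big(\int_2^t(s')^{2\alpha-2}\,\d s'+1\big)A_r$, where $A_r:=\int_0^1(2r+w)^{-d/2}\,\d w$ satisfies $A_r\sim1,\ \log(r^{-1}+1),\ r^{-1/2}$ for $d=1,2,3$ respectively. For $s'<1$ --- the genuinely small-scale region --- one decomposes further into $\{s'\le2r\}$, $\{2r<s'<4r\}$, $\{4r<s'<1\}$ and estimates each by elementary and Beta-function integrals $\int_0^a(a-w)^{\alpha-1}w^{\beta-1}\,\d w=a^{\alpha+\beta-1}B(\alpha,\beta)$: the region $\{s'\le2r\}$, on which $2r+w\sim r$, produces the singular prefactor $r^{\,2\alpha-d/2}$, while $\{4r<s'<1\}$ produces the logarithmic corrections that appear at the critical exponents.

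\emph{Step 3 (conclusion).} Adding the contributions and comparing sizes gives, in each of the listed $(d,\alpha)$-cases, the stated bound for $t^{-2\alpha}J$ (in several cases the bound one actually obtains is sharper than claimed --- e.g.\ $t^{-2\alpha}r^{2\alpha-1}$ without the extra $\log(r^{-1}+1)$ when $d=2$, $\alpha<\tfrac12$ --- which is harmless, since only an upper bound is wanted). The main difficulty lies in Steps~2--3: tracking the joint $(t,r)$-dependence through the domain decomposition, especially at the critical values $\alpha=\tfrac12$ (all $d$) and $\alpha=\tfrac14$ ($d=1$), where logarithmic factors emerge, and the coupled factor $\log(t/r)$ for $d=2,3$ at $\alpha=\tfrac12$, which reflects that the effective range of the $s'$-integral runs from scale $r$ up to $t$ rather than from an $O(1)$ scale.
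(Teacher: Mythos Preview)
Your approach is correct and reaches the same bounds, but the reduction you make in Step~1 differs from the paper's. After arriving at the common identity
\[
\E^\nu\big[\|f_{t,r}^{(\alpha)}-1\|_2^2\big]=\frac{2\alpha^2}{t^{2\alpha}}\iint_{\{0\le u\le v\le t\}}\nu\big(P_u h^{(0)}_{v-u+2r}\big)\,u^{\alpha-1}v^{\alpha-1}\,\d u\,\d v,
\]
the paper \emph{splits} $\nu(P_u h^{(0)}_\rho)=\overline{h}^{(0)}_\rho+\nu(P_u h^{(0)}_\rho-\overline{h}^{(0)}_\rho)$ into a mean part $\I_1$ and a fluctuation part $\I_2(\nu)$. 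For $\I_1$ it keeps the exact spectral sum $\overline{h}^{(0)}_\rho=\sum_i\e^{-\lambda_i\rho}$, rescales each term by $\lambda_i$, and applies Lemma~\ref{I4} (the incomplete Gamma asymptotics) termwise, yielding the tidy expression $\I_1\lesssim t^{-2\alpha}\overline{h}^{(2\alpha)}_{2r}$ (resp.\ $t^{-1}\overline{h}^{(1)}_{2r}$, etc.) whose $r$-dependence is then read off from Lemma~\ref{trace}. For $\I_2(\nu)$ it uses the $L^\infty$-Poincar\'e inequality to gain the factor $\e^{-\lambda_1 u}$ and then integrates by brute force in real variables. You instead bound $\nu(P_u h^{(0)}_\rho)\le\|h^{(0)}_\rho\|_\infty$ in one stroke and analyse the resulting integral $J$ by a real-variable decomposition in $(w,s')$. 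Your route is more elementary --- it avoids Lemma~\ref{I4} entirely --- and, as you note, sometimes gives sharper bounds (e.g.\ no spurious $\log(r^{-1}+1)$ when $d=2$, $\alpha<\tfrac12$); the price is a longer case analysis. One small slip: the inequality $(s'-w)^{\alpha-1}\le c\,(s')^{\alpha-1}$ on $\{w\le1,\ s'\ge1\}$ fails for $\alpha<1$ when $s'$ and $w$ are both close to $1$; split instead at $s'=2$ (and handle $1\le s'\le2$ together with the small-$s'$ region), which repairs the argument without affecting the final bounds.
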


\begin{proof}
Recalling that
\beqs
f_{t, r}^{(\alpha)}(y) - 1 \, = \, \sum_{i = 1}^{\infty} \e^{-\lambda_i r} \mu_t^{(\alpha)}(\phi_i) \phi_i(y)
\eeqs
and $\{ \phi_i  \}_{i \geq 1}$ is orthonormal, we have
\beqs
\|f_{t, r}^{(\alpha)} - 1  \|_2^2  \, = \, \sum_{i = 1}^{\infty} \e^{-2 \lambda_i r} |\mu_t^{(\alpha)}(\phi_i)|^2.
\eeqs

The Markov property together with $P_s \phi_i = \e^{-s\lambda_i} \phi_i$ implies that for any $v \ge u \ge 0$,
\beqs
\E^\nu [ \phi_i (X_{v}) | X_t, \, t \leq u ] \, = \, P_{v - u} \phi_i (X_u) \, =  \, \e^{-(v - u) \lambda_i} \phi_i (X_u).
\eeqs
Then using Fubini's theorem, we obtain
\begin{align} \label{L2 norm}
& \E^\nu \big[ \|f_{t, r}^{(\alpha)} - 1  \|_2^2    \big] \, = \, \E^\nu \bigg[ \sum_{i = 1}^{\infty} \e^{-2r\lambda_i} |\mu_t^{(\alpha)}(\phi_i)|^2  \bigg] \nonumber\\
& \, = \, \frac{2 \alpha^2}{t^{2 \alpha}} \sum_{i = 1}^{\infty}  \iint_{\{0 \, \leq \, u \, \leq \, v \, \leq \, t  \}}  \E^\nu \left[  \phi_i^2(X_u) \right]  \e^{- (v - u + 2r) \lambda_i} u^{\alpha - 1} v^{\alpha - 1} \d u \d v \nonumber\\
& \, = \, \frac{2 \alpha^2}{t^{2 \alpha}}  \int_{0}^{t} \int_{u}^{t} \nu \big( P_{u} h_{v - u + 2r}^{(0)}  \big)  u^{\alpha - 1} v^{\alpha - 1} \d v \d u,
\end{align}
where $h_{v - u + 2r}^{(0)}$ is defined by \eqref{theta function}. Then we can write
\beq \label{decomposition}
\E^\nu \big[ \|f_{t, r}^{(\alpha)} - 1  \|_2^2    \big] \, = \, \I_1 \, + \, \I_2(\nu),
\eeq
where
\begin{align*}
\I_1 &\, := \, \frac{2 \alpha^2}{t^{2 \alpha}}  \int_{0}^{t} \int_{u}^{t} \overline{h}_{v - u + 2r}^{(0)} \,   u^{\alpha - 1} v^{\alpha - 1} \d v\d u;\\
 \I_2 (\nu) &\, := \, \frac{2 \alpha^2}{t^{2 \alpha}}  \int_{0}^{t} \int_{u}^{t} \nu \big( P_u h_{v - u + 2r}^{(0)} -  \overline{h}_{v - u + 2r}^{(0)} \big)  u^{\alpha - 1} v^{\alpha - 1} \d v \d u.
\end{align*}

For $\I_1$, we make use of Lemma \ref{I4} to assert that for $t \ge t_M$ and $r > 0$,
\begin{align}
\I_1  & \, = \, \sum_{i = 1}^{\infty} \frac{2 \alpha^2 \e^{-2r\lambda_i}}{t^{2 \alpha} \lambda_i^{2\alpha}} \int_{0}^{t\lambda_i} \e^s  s^{\alpha - 1}  \big( G^{(\alpha)}(s) - G^{(\alpha)}(t \lambda_i) \big) \d s \nonumber\\
& \, \leq \, c \begin{cases}
t^{-2\alpha}  \overline{h}^{(2\alpha)}_{2r}, & \textrm{if }\alpha \in (0, \frac12);\\
\smallskip
t^{-1} \ln t \cdot \overline{h}^{(1)}_{2r} + t^{-1} |g^{(1)}_{2r}| , & \textrm{if }\alpha = \frac12;\\
\smallskip
t^{-1}  \overline{h}^{(1)}_{2r}, & \textrm{if } \alpha \in (\frac12, \infty),
\end{cases}
\end{align}
where $\overline{h}^{(2\alpha)}_{2r}$, $\overline{h}^{(1)}_{2r}$ and $g^{(1)}_{2r}$ are defined by \eqref{theta function'}, whose upper bounds have been estabilished in Lemma \ref{trace}.

Now we turn to the analysis of $\I_2(\nu)$. Since $h_{v - u + 2r}^{(0)} \in C_b(M)$, we have
\beqs
\sup_{\nu \in \Pr(M)} \big| \nu \big( P_{u} h_{v - u + 2r}^{(0)} -  \overline{h}_{v - u + 2r}^{(0)} \big) \big| \, = \, \big\| P_{u} \big(  h_{v - u + 2r}^{(0)} -  \overline{h}_{v - u + 2r}^{(0)}  \big)   \big\|_\infty.
\eeqs
The $L^\infty$-Poincar\'e inequality \eqref{general Poin} implies
\beqs
\big\| P_{u} \big(  h_{v - u + 2r}^{(0)} -  \overline{h}_{v - u + 2r}^{(0)}  \big)   \big\|_\infty \, \leq \, c \e^{- \lambda_1 u} \|  h_{v - u + 2r}^{(0)} \|_\infty.
\eeqs
Combined with Lemma \ref{trace}, it follows that
\beqs
\sup_{\nu \in \Pr(M)} \big| \nu \big( P_{u} h_{v - u + 2r}^{(0)} -  \overline{h}_{v - u + 2r}^{(0)} \big) \big| \, \leq \, c \big( 1 \wedge (v - u + 2r) \big)^{-\frac{d}{2}} \e^{-\lambda_1 (v + 2r)}.
\eeqs
Thus for $r \in (0, 1]$,
\begin{align*}
\sup_{\nu \in \Pr(M)} |\I_2(\nu)| \, \leq \, c t^{-2\alpha}  \int_{0}^{t} \bigg( \int_{0}^{v} \big( 1 + ( v - u + 2r )^{-\frac{d}2} \big)\, u^{\alpha - 1} \d u \bigg) \e^{-\lambda_1 v} v^{\alpha - 1}  \d v,
\end{align*}
Consequently, for $t \geq t_M$ and $r \in (0, 1]$, when $d = 1$,
\begin{align}
\sup_{\nu \in \Pr(M)} |\I_2(\nu)| \, &\leq \, c t^{-2\alpha} \bigg( 1 + r^{-\frac12}\int_{0}^{r}    v^{2\alpha - 1} \d v + \int_{r}^{t} \e^{-\lambda_1 v} v^{2\alpha - \frac32}  \d v     \bigg)\nonumber\\
\, &\leq \,  c t^{-2\alpha} \cdot
\begin{cases}
r^{-(\frac12 - 2\alpha)}, & \textrm{if }\alpha \in (0, \frac14);\\
\log(r^{-1} + 1), & \textrm{if }\alpha = \frac14;\\
1, & \textrm{if }\alpha \in (\frac14, \infty).
\end{cases}
\end{align}
Similarly, when $d = 2$,
\begin{align}
\sup_{\nu \in \Pr(M)} |\I_2(\nu)| \, &\leq \, c t^{-2\alpha} \bigg( 1 + r^{-1} \int_{0}^{r}    v^{2\alpha - 1} \d v + \int_{r}^{t} \log(r^{-1}v) \e^{-\lambda_1 v} v^{2\alpha - 2}  \d v     \bigg)\nonumber\\
\, &\leq \,  c t^{-2\alpha} \log(r^{-1} + 1) \cdot
\begin{cases}
r^{-(1 - 2\alpha)}, & \textrm{if }\alpha \in (0, \frac12);\\
\log(r^{-1} + 1), & \textrm{if }\alpha = \frac12;\\
1, & \textrm{if }\alpha \in (\frac12, \infty).
\end{cases}
\end{align}
And when $d = 3$,
\begin{align} \label{J2'}
\sup_{\nu \in \Pr(M)} |\I_2(\nu)| \, \leq \,  c t^{-2\alpha} r^{-\frac12} \cdot
\begin{cases}
r^{-(1 - 2\alpha)}, & \textrm{if }\alpha \in (0, \frac12);\\
\log(r^{-1} + 1), & \textrm{if }\alpha = \frac12;\\
1, & \textrm{if }\alpha \in (\frac12, \infty).
\end{cases}
\end{align}
Combining \eqref{decomposition}-\eqref{J2'} completes the proof.

\end{proof}

For later developments, we set
\beq \label{notation}
F_{t, r}^{(\alpha)} \, = \, \sup_{\nu \in \Pr(M)} \E^\nu \big[ \|f_{t, r}^{(\alpha)} - 1  \|_2^2    \big], \quad Q_{t, r}^{(\alpha)} \, = \, R_\alpha (t)^{-1} \int_{0}^{r} F_{t, s}^{(\alpha)} \, \d s.
\eeq
Then Lemma \ref{square bound} implies the following statement.
\begin{cor} \label{square bound'}
If we choose $r = r(t) = t^{-\beta}$ for some $\beta > 0$, then there exists $c = c(\beta) > 0$ such that for any $t \geq t_M$,
\begin{enumerate}[(i)]
\item When $d = 1$,
\begin{equation*}
F_{t, r}^{(\alpha)} \, \leq \,  c R_\alpha (t) \cdot
\begin{cases}
t^{(\frac12 - 2\alpha)\beta}, & \textrm{if }\alpha \in (0, \frac14);\\
\log(t), & \textrm{if }\alpha = \frac14;\\
1, & \textrm{if }\alpha \in (\frac14, \infty).
\end{cases}\\
Q_{t, r}^{(\alpha)} \, \leq \, c
\begin{cases}
t^{-(2\alpha + \frac12)\beta}, & \textrm{if }\alpha \in (0, \frac14);\\
t^{-\beta} \log(t), & \textrm{if }\alpha = \frac14;\\
t^{-\beta}, & \textrm{if }\alpha \in (\frac14, \infty).
\end{cases}
\end{equation*}

\item When $d = 2$,
\begin{equation*}
F_{t, r}^{(\alpha)} \, \leq \,  c R_\alpha (t)\log(t) \cdot
\begin{cases}
t^{(1 - 2\alpha)\beta}, & \textrm{if }\alpha \in (0, \frac12);\\
1, & \textrm{if }\alpha \in [\frac12, \infty).
\end{cases}\\
Q_{t, r}^{(\alpha)} \, \leq \,  c \log(t) \cdot
\begin{cases}
t^{- 2\alpha \beta}, & \textrm{if }\alpha \in (0, \frac12);\\
t^{-\beta}, & \textrm{if }\alpha \in [\frac12, \infty).
\end{cases}
\end{equation*}

\item When $d = 3$,
\beqs
F_{t, r}^{(\alpha)} \, \leq \, c R_\alpha (t) \cdot
\begin{cases}
t^{(\frac32 - 2\alpha)\beta}, & \textrm{if }\alpha \in (0, \frac12);\\
t^{\frac{\beta}2}, & \textrm{if }\alpha \in [\frac12, \infty),
\end{cases}\\
Q_{t, r}^{(\alpha)} \, \leq \, c
\begin{cases}
t^{- (2\alpha - \frac12) \beta}, & \textrm{if }\alpha \in (\frac14, \frac12);\\
t^{-\frac{\beta}{2}}, & \textrm{if }\alpha \in [\frac12, \infty).
\end{cases}
\eeqs

\end{enumerate}
\end{cor}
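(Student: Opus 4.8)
The plan is to derive both assertions from Lemma~\ref{square bound} by a routine substitution-and-integration argument; the content is purely computational, so I will organize it to make the case bookkeeping transparent. The starting observation is that for $t \ge 1$ and $\beta > 0$ one has $r = r(t) = t^{-\beta} \in (0, 1]$, so Lemma~\ref{square bound} applies not only at $r$ but at every $s \in (0, r]$; moreover every logarithmic factor occurring there obeys $\log(s^{-1}+1) \le \log(r^{-1}+1) = \log(t^{\beta}+1) \le c(\beta)\log t$ and $\log(t/s) \le \log(t^{1+\beta}) \le c(\beta)\log t$ once $t \ge t_M$.

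First I would handle $F_{t,r}^{(\alpha)}$, which is just Lemma~\ref{square bound} evaluated at $r = t^{-\beta}$. Each bound there factors as a pure power of $t$ --- namely $t^{-2\alpha}$ when $\alpha < \tfrac12$, $t^{-1}\log t$ when $\alpha = \tfrac12$, and $t^{-1}$ when $\alpha > \tfrac12$, which is precisely $R_\alpha(t)$ --- times an $r$-dependent factor of the form $r^{-\kappa}$ or $\log(r^{-1}+1)$, with $\kappa \in \{\tfrac12 - 2\alpha,\ 1 - 2\alpha,\ \tfrac12,\ \tfrac32 - 2\alpha\}$ according to $d$ and $\alpha$. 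Substituting $r = t^{-\beta}$ turns $r^{-\kappa}$ into $t^{\kappa\beta}$ and $\log(r^{-1}+1)$ into something $\le c(\beta)\log t$, which gives exactly the asserted forms for $F_{t,r}^{(\alpha)}$; in dimensions $2$ and $3$ with $\alpha = \tfrac12$ one also uses $\log(t/r) \le c\log t$ to fold that factor into the stated $\log t$.

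Next I would bound $Q_{t,r}^{(\alpha)} = R_\alpha(t)^{-1}\int_0^r F_{t,s}^{(\alpha)}\,\d s$. The only nonobvious ingredient is the elementary inequality
\[
\int_0^r s^{-\kappa}\big(\log(s^{-1}+1)\big)^{j}\,\d s \, \le \, c(\kappa,j)\, r^{\,1-\kappa}\big(\log(r^{-1}+1)\big)^{j}, \qquad r \in (0, \tfrac12],
\]
valid exactly when $\kappa < 1$ (and to be read as $c(j)\,r\,(\log(r^{-1}+1))^{j}$ when $\kappa \le 0$); it follows immediately from the substitution $s = ru$ together with $\int_0^1 u^{-\kappa}(\log(1/u))^{j}\,\d u < \infty$ and $\log(1/(ru)) \le \log(1/r) + \log(1/u)$. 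The threshold $\kappa < 1$ is precisely why the $d = 3$ statement is confined to $\alpha > \tfrac14$, since there $\kappa = \tfrac32 - 2\alpha < 1 \iff \alpha > \tfrac14$; in every other listed regime $\kappa < 1$ holds as well. Inserting $r = t^{-\beta}$ into this estimate, bounding the remaining logarithms by $c(\beta)\log t$, and dividing by $R_\alpha(t)$ then produces the claimed bounds for $Q_{t,r}^{(\alpha)}$ --- for instance when $d = 3$ and $\alpha \in (\tfrac14,\tfrac12)$ one gets $\int_0^r F_{t,s}^{(\alpha)}\,\d s \le c\, t^{-2\alpha} r^{\,2\alpha - 1/2}$, hence $Q_{t,r}^{(\alpha)} \le c\, t^{-(2\alpha - \frac12)\beta}$ after dividing by $R_\alpha(t) = t^{-2\alpha}$.

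What remains is to run through the finitely many $(d,\alpha)$ regimes and match the constants and exponents; there is no conceptual obstacle here. The only mild traps are (a) keeping track of which logarithmic factor --- $\log(r^{-1}+1)$, $\log(t/r)$, or the $\log t$ already built into $R_{1/2}(t)$ --- is active in each case, and (b) checking the integrability threshold $\kappa < 1$ so that $\int_0^r F_{t,s}^{(\alpha)}\,\d s$ is finite in each regime the corollary actually covers.
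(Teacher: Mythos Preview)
Your proposal is correct and matches the paper's approach: the paper states only that Corollary~\ref{square bound'} follows from Lemma~\ref{square bound}, and what you have written is precisely the substitution-and-integration verification that underlies that one-line claim. Your identification of the integrability threshold $\kappa < 1$ (equivalently $\alpha > \tfrac14$ when $d=3$) as the reason the $Q$-bound is stated only for $\alpha \in (\tfrac14,\tfrac12)$ there is exactly right.
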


The following propositions are devoted to asymptotic estimates of the $H^{-1, 2}$-Sobolev norm of $f_{t, r}^{(\alpha)} - 1$, which contributes the main term of our results.

\begin{propo}[$H^{-1, 2}$-Sobolev norm estimate for $\alpha \ge \frac12$] \label{energy}
Assume that $M$ is compact with dimension $d \leq 3$. For $\alpha \in [\frac12, \infty)$, there exists $c > 0$ such that for any $t \geq t_M$ and $r \in (0, 1]$,
\begin{align*}
 \sup_{\nu \in \Pr(M)} & \bigg| R_\alpha(t)^{-1} \E^\nu \left[ \int_M |\nabla (-\L)^{-1} (f_{t, r}^{(\alpha)} - 1)|^2 \d\mu \right] -  c(\alpha) \sum_{i = 1}^{\infty} \lambda_i^{-2} \bigg|\\
\quad \,& \leq \,e(r) \, + \, c
\begin{cases}
\log(t)^{-1}, & \textrm{if }\alpha = \frac12;\\
t^{-2(\alpha \wedge 1) + 1}, & \textrm{if }\alpha \in (\frac12, \infty),
\end{cases}
\end{align*}
where
\beqs
c(\alpha) \, = \,
\begin{cases}
\frac12, & \textrm{if }\alpha = \frac12;\\
\frac{2 \alpha^2}{2\alpha - 1} , & \textrm{if }\alpha \in (\frac12, \infty),
\end{cases}\\\quad
e(r) \, = \, c
\begin{cases}
r, & \textrm{when }d = 1;\\
r\log(r^{-1} + 1), & \textrm{when }d = 2;\\
r^{\frac12} , & \textrm{when }d = 3.
\end{cases}
\eeqs
\end{propo}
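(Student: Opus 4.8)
## Proof proposal for Proposition \ref{energy}

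The plan is to expand $f_{t,r}^{(\alpha)}-1 = \sum_i \e^{-\lambda_i r}\mu_t^{(\alpha)}(\phi_i)\phi_i$ in the eigenbasis, so that $(-\L)^{-1}(f_{t,r}^{(\alpha)}-1) = \sum_i \lambda_i^{-1}\e^{-\lambda_i r}\mu_t^{(\alpha)}(\phi_i)\phi_i$ and, by orthonormality of $\{\nabla(-\L)^{-1}\phi_i\}$ in the appropriate inner product (integration by parts gives $\int_M |\nabla(-\L)^{-1}\phi_i|^2\,\d\mu = \lambda_i^{-1}$ and cross terms vanish),
\beqs
\int_M |\nabla(-\L)^{-1}(f_{t,r}^{(\alpha)}-1)|^2\,\d\mu \, = \, \sum_{i=1}^\infty \lambda_i^{-1}\e^{-2\lambda_i r}\,|\mu_t^{(\alpha)}(\phi_i)|^2.
\eeqs
Taking $\E^\nu$ and using the same Markov-property computation as in the proof of Lemma \ref{square bound} (equation \eqref{L2 norm}), I would obtain
\beqs
\E^\nu\Big[\int_M |\nabla(-\L)^{-1}(f_{t,r}^{(\alpha)}-1)|^2\,\d\mu\Big] \, = \, \frac{2\alpha^2}{t^{2\alpha}}\int_0^t\!\!\int_u^t \nu\big(P_u h_{v-u+2r}^{(1)}\big)\,u^{\alpha-1}v^{\alpha-1}\,\d v\,\d u,
\eeqs
where now the extra factor $\lambda_i^{-1}$ turns the kernel $h^{(0)}$ into $h^{(1)}$ from \eqref{theta function}. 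As before I split this into a main term $\I_1$ with $h^{(1)}$ replaced by its average $\overline{h}^{(1)}_{v-u+2r}$ and an error term $\I_2(\nu)$ involving $\nu(P_u(h^{(1)}_{\cdot}-\overline{h}^{(1)}_{\cdot}))$.

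For the error term $\I_2(\nu)$: apply the $L^\infty$-Poincaré inequality \eqref{general Poin} to get $\|P_u(h^{(1)}_{v-u+2r}-\overline{h}^{(1)}_{v-u+2r})\|_\infty \le c\e^{-\lambda_1 u}\|h^{(1)}_{v-u+2r}\|_\infty$, then use the bound on $h^{(1)}_r$ from Lemma \ref{trace} (with $\theta=1$): this is $(1\wedge r)^{-(d/2-1)}$ for $d\le 3$, i.e.\ $r^{1/2}$-type for $d=3$, logarithmic for $d=2$, bounded for $d=1$. Integrating against $u^{\alpha-1}v^{\alpha-1}$ over $0\le u\le v\le t$ and absorbing the $\e^{-\lambda_1 v}$ decay, the dominant contribution comes from $v$ near $0$ (small-time singularity), which after multiplying by $R_\alpha(t)^{-1}t^{-2\alpha}\sim$ (a power of $t$ for $\alpha<1/2$, or $\sim t^{-1}$ for $\alpha\ge 1/2$) yields precisely the function $e(r)$. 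The point is that since we are in the regime $\alpha\ge\frac12$ the prefactor $R_\alpha(t)^{-1}t^{-2\alpha}$ is $O(1)$ (for $\alpha=1/2$ it is $t/(\ln t)\cdot t^{-1} = 1/\ln t$, even better), so $\I_2$ contributes only $e(r)$ plus, for $\alpha=\frac12$, a $\log(t)^{-1}$ improvement.

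For the main term $\I_1$: write $\I_1 = \sum_i \frac{2\alpha^2\e^{-2\lambda_i r}}{t^{2\alpha}\lambda_i}\cdot\lambda_i^{-2\alpha}\int_0^{t\lambda_i}\e^s s^{\alpha-1}(G^{(\alpha)}(s)-G^{(\alpha)}(t\lambda_i))\,\d s$ by the substitution used for $\I_1$ in Lemma \ref{square bound}. Now invoke Lemma \ref{I4}: for $\alpha\in(\frac12,\infty)$ the inner integral is $\frac{(t\lambda_i)^{2\alpha-1}}{2\alpha-1}+O((t\lambda_i)^{2\alpha-2}\vee 1)$, giving the leading term $\frac{2\alpha^2}{2\alpha-1}\sum_i\lambda_i^{-2}\e^{-2\lambda_i r}t^{-1}$ with error $O(t^{-2(\alpha\wedge 1)+1}\sum_i\lambda_i^{-2-2\alpha+2(\alpha\wedge1)}\e^{-2\lambda_i r})$; for $\alpha=\frac12$ it is $\ln(t\lambda_i)+O(1) = \ln t + \ln\lambda_i + O(1)$, giving leading term $\frac12\sum_i\lambda_i^{-2}\e^{-2\lambda_i r}\cdot t^{-1}\ln t$ with error controlled by $\sum_i\lambda_i^{-2}(|\ln\lambda_i|+1)\e^{-2\lambda_i r}\cdot t^{-1}$, i.e.\ an $O(\log(t)^{-1})$ relative error after dividing by $R_\alpha(t)=t^{-1}\ln t$. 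Finally replace $\sum_i\lambda_i^{-2}\e^{-2\lambda_i r}$ by $\sum_i\lambda_i^{-2} = \overline{h}^{(2)}_0$ at cost $\sum_i\lambda_i^{-2}(1-\e^{-2\lambda_i r})\le 2r\sum_i\lambda_i^{-1}\e^{-0}$... actually more carefully $\le c\sum_i\lambda_i^{-2}(\lambda_i r\wedge 1)$, which one bounds by $\overline{h}^{(1)}_{cr}$-type quantities and Lemma \ref{trace} to recover exactly the $e(r)$ orders; the series $\sum_i\lambda_i^{-2}$ converges since $\lambda_i\sim i^{2/d}$ by Weyl's law and $d\le 3<4$. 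The main obstacle is the bookkeeping of which error dominates in each $(d,\alpha)$ case and checking that the small-$v$ singularity in $\I_2$ together with the tail bound on $h^{(1)}$ produces precisely $e(r)$ and not something worse; all of this is routine given Lemmas \ref{trace} and \ref{I4}, but requires care near the endpoints $\alpha=\frac12$ and $\alpha=1$.
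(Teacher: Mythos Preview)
Your approach is essentially identical to the paper's: the same eigenfunction expansion, the same Markov-property identity producing the double integral against $h^{(1)}_{v-u+2r}$, the same split into a main term $\I_1$ (with $\overline{h}^{(1)}$) and a fluctuation term $\I_2(\nu)$, and the same use of Lemmas~\ref{trace} and~\ref{I4}.

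There is, however, a bookkeeping error in your treatment of $\I_2(\nu)$. You claim that after multiplying by $R_\alpha(t)^{-1}$ the term $\I_2(\nu)$ produces ``precisely the function $e(r)$''. This is not right: for $d\le 3$ the singularity of $\|h^{(1)}_s\|_\infty$ at $s=0$ is at worst $(1\wedge s)^{-1/2}$ (for $d=3$), which is \emph{integrable}, so the double integral $\iint_{\{0\le u\le v\le t\}}\e^{-\lambda_1 v}\|h^{(1)}_{v-u+2r}\|_\infty u^{\alpha-1}v^{\alpha-1}\,\d u\,\d v$ is bounded uniformly in $r\in(0,1]$ and in $t$. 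Hence $R_\alpha(t)^{-1}\I_2(\nu)=O(t^{1-2\alpha})$ for $\alpha>\tfrac12$ and $O(\log(t)^{-1})$ for $\alpha=\tfrac12$; this is the source of the $t$-decay term in the statement, not of $e(r)$. (Also note that $R_\alpha(t)^{-1}t^{-2\alpha}=t^{1-2\alpha}$ for $\alpha>\tfrac12$, not ``$\sim t^{-1}$'' as you wrote.) The function $e(r)$ arises \emph{only} from the final replacement $\sum_i\lambda_i^{-2}\e^{-2\lambda_i r}\to\sum_i\lambda_i^{-2}$, which you do identify correctly; the paper writes this cleanly as $\sum_i\lambda_i^{-2}(1-\e^{-2r\lambda_i})=\int_0^{2r}\overline{h}^{(1)}_s\,\d s$ and then invokes Lemma~\ref{trace} with $\theta=1$. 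Once this misattribution is corrected your argument matches the paper line for line.
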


\begin{proof} We only deal with the case $\alpha > \frac12$, the other case is similar. Recalling that $(-\L)^{-1} = \int_{0}^{\infty} P_s \d s$, then integration by parts gives
\beqs
\int_M |\nabla (-\L)^{-1} (f_{t, r}^{(\alpha)} - 1)|^2 \d \mu  \, = \, \int_{0}^{\infty} \| P_{\frac{s}{2}} f_{t, r}^{(\alpha)} - 1 \|_2^2 \d s.
\eeqs
Thus by the definition of $f_{t, r}^{(\alpha)}$ and the fact that $\{ \phi_i \}_{i \geq 1}$ is orthonormal,
\beqs
\int_M |\nabla (-\L)^{-1} (f_{t, r}^{(\alpha)} - 1)|^2 \d \mu \, = \, \sum_{i = 1}^{\infty} \frac{|\mu_t^{(\alpha)}(\phi_i)|^2}{\lambda_i \e^{2r\lambda_i}}.
\eeqs
By the same calculation as in Lemma \ref{square bound}, we obtain
\begin{align} \label{sobolev norm2}
& \E^\nu \left[ \int_M |\nabla (-\L)^{-1} (f_{t, r}^{(\alpha)} - 1)|^2 \d \mu  \right] \, = \, \E^\nu \bigg[ \sum_{i = 1}^{\infty} \frac{|\mu_t^{(\alpha)}(\phi_i)|^2}{\lambda_i \e^{2r\lambda_i}}  \bigg] \nonumber\\
& \, = \, \frac{2 \alpha^2}{t^{2\alpha }} \sum_{i = 1}^{\infty} \frac{1}{\lambda_i}  \iint_{\{0 \, \leq \, u \, \leq \, v \, \leq \, t  \}} \E^\nu \left[  \phi_i^2(X_{u}) \right] \e^{- (v - u + 2r) \lambda_i}  u^{\alpha - 1} v^{\alpha - 1} \d u \d v \nonumber\\
& \, = \, \frac{2 \alpha^2}{t^{2\alpha }} \int_{0}^{t} \int_{u}^{t} \nu \big( P_{u} h_{v - u + 2r}^{(1)}  \big)  u^{\alpha - 1} v^{\alpha - 1} \d v \d u,
\end{align}
where $h_{v - u + 2r}^{(1)}$ is defined by \eqref{theta function}. In analogy with \eqref{decomposition}, we write
\begin{align}\label{sobolev norm}
t \, \E^\nu \left[ \int_M |\nabla (-\L)^{-1} (f_{t, r}^{(\alpha)} - 1)|^2 \d \mu  \right] \, = \, \J_1 \, + \, \J_2(\nu),
\end{align}
where
\begin{align*}
\J_1 & \, := \, \frac{2 \alpha^2}{t^{2\alpha - 1 }}  \int_{0}^{t} \int_{u}^{t} \overline{h}_{v - u + 2r}^{(1)} \, u^{\alpha - 1} v^{\alpha - 1} \d v \d u;\\
\J_2(\nu) & \, := \, \frac{2 \alpha^2}{t^{2\alpha -1}}  \int_{0}^{t} \int_{u}^{t} \nu \big( P_{u} h_{v - u + 2r}^{(1)} -  \overline{h}_{v - u + 2r}^{(1)} \big) u^{\alpha - 1} v^{\alpha - 1} \d v \d u.
\end{align*}

By Lemma \ref{I4}, for $t \geq t_M$ and $r \in (0, 1]$,
\begin{align} \label{I1 bound}
\J_1 \, &= \, \sum_{i = 1}^{\infty} \frac{2\alpha^2 \e^{-2r\lambda_i}}{t^{2\alpha - 1}\lambda_i^{2\alpha + 1} } \int_{0}^{t\lambda_i} \e^u  s^{\alpha - 1}  \big( G^{(\alpha)}(s) - G^{(\alpha)}(t \lambda_i) \big) \d s \nonumber\\
 \, &= \, \frac{2\alpha^2}{2\alpha - 1} \sum_{i = 1}^{\infty} \lambda_i^{-2} \e^{-2r\lambda_i} \, + \,
\begin{cases}
t^{-(2\alpha - 1)} \cdot O \big( \overline{h}_{2r}^{(2\alpha + 1)} \big), & \textrm{if }\alpha \in (\frac12, 1);\\
t^{-1} \cdot  O \big( \overline{h}_{2r}^{(3)} \big), & \textrm{if }\alpha \in [1, \infty).
\end{cases}
\end{align}

On the other hand, by a similar argument in Lemma \ref{square bound},
\begin{align*}
\big| \nu \big( P_{u} h_{v - u + 2r}^{(1)} -  \overline{h}_{v - u + 2r}^{(1)} \big) \big|  \, \leq \, \big\| P_{u} \big( h_{v - u + 2r}^{(1)} -  \overline{h}_{v - u + 2r}^{(1)} \big) \big\|_\infty \, \leq \, c \e^{-\lambda_1 u} \| h_{v - u + 2r}^{(1)} \|_\infty.
\end{align*}
Therefore, for $t \geq t_M$ and $r \in (0, 1]$,
\begin{align} \label{I2 bound}
\sup_{\nu \in \Pr(M)} |\J_2(\nu)| \, &\leq \, c t^{-(2\alpha - 1)}  \int_{0}^{t} \int_{u}^{t} \e^{-\lambda_1 u} \| h_{v - u + 2r}^{(1)} \|_\infty \, u^{\alpha - 1} v^{\alpha - 1} \d v \d u\nonumber\\
\, &\leq \, c t^{-(2\alpha - 1)}  \int_{0}^{t} \bigg( \int_{0}^{v} \big( 1 + (v - u + 2r)^{-\frac12} \big) u^{\alpha - 1} \d u  \bigg) \e^{-\lambda_1 v} v^{\alpha - 1}  \d v \le \, c t^{-(2\alpha - 1)},
\end{align}
where in the last line we used Lemma \ref{trace} and the fact that $d \le 3$.

Notice that
\beq \label{error bound}
\sum_{i = 1}^{\infty} \lambda_i^{-2} - \sum_{i = 1}^{\infty} \lambda_i^{-2} \e^{-2r\lambda_i} \, = \, \int_{0}^{2r} \overline{h}^{(1)}_s \d s.
\eeq
Combining \eqref{sobolev norm}-\eqref{error bound}, the proof is then complete after the use of Lemma \ref{trace}.
\end{proof}

\begin{remark} \label{remark0}
The proof of Proposition \ref{energy} indeed gives the following upper bound for the $H^{-1, 2}$-Sobolev norm of $f_{t, r}^{(\alpha)} - 1$, which keeps valid for any dimension $d$ and $\alpha > 0$. By combining \eqref{sobolev norm}-\eqref{I2 bound} and Lemma \ref{I4}, we have
\begin{align*}
\E^\nu \left[ \int_M |\nabla (-\L)^{-1} (f_{t, r}^{(\alpha)} - 1)|^2 \d \mu  \right] \, &\le \, c t^{-2\alpha}  \iint_{\{0 \, \leq \, u \, \leq \, v \, \leq \, t  \}}  \e^{-\lambda_1 u} \| h_{v - u + 2r}^{(1)} \|_\infty \, u^{\alpha - 1} v^{\alpha - 1} \d u   \d v\\
\, & \qquad + \, c
\begin{cases}
t^{-2\alpha} \overline{h}^{(2\alpha + 1)}_{2r}, & \textrm{if } \alpha \in (0, \frac12);\\
t^{-1} \big( \ln t \cdot \overline{h}^{(2)}_{2r}  \, + \, |g^{(2)}_{2r}| \big), & \textrm{if } \alpha = \frac12;\\
t^{-1}\overline{h}^{(2)}_{2r}, & \textrm{if } \alpha \in (\frac12, \infty),
\end{cases}
\end{align*}
where $t \geq t_M$ and $r \in (0, 1]$.

\end{remark}

Now we turn to the case $0 < \alpha < \frac12$. We shall make use of the connection between $L^2$-norm and $H^{-1, 2}$-Sobolev norm of $f_{t, r}^{(\alpha)} - 1$. To this end, we first define the following functions: Given $\alpha \in (\frac{(d - 2)^+}{4}, \frac12)$, for $t \geq t_M$ and $r \in (0, 1]$,
\begin{align} \label{defofF}
 H_{t, r}^{(\alpha)}(x) &\,: = \, 2 \alpha^2  \iint_{\{0 \, \leq \, u \, \leq \, v \, \leq \, t  \}} \big(  P_{u} h_{v - u + 2r}^{(1)} \big) (x) \, u^{\alpha - 1} v^{\alpha - 1} \d u \d v; \nonumber\\
 H_t^{(\alpha)} (x) &\, : = \, 2 \alpha^2  \iint_{\{0 \, \leq \, u \, \leq \, v \, \leq \, t  \}} \big( P_{u} h_{v - u}^{(1)} \big) (x) \, u^{\alpha - 1} v^{\alpha - 1} \d u \d v; \nonumber\\
 H^{(\alpha)} (x) &\, : = \, 2 \alpha^2 \iint_{\{0 \, \leq \, u \, \leq \, v \, < \, \infty  \}} \big(  P_{u} h_{v - u}^{(1)} \big) (x) \, u^{\alpha - 1} v^{\alpha - 1} \d u \d v, \quad x \in M.
\end{align}
It can be observed that these functions have the following properties:
\begin{enumerate}[(i)]
\item $H_{t, r}^{(\alpha)}$, $H_t^{(\alpha)}$ and $H^{(\alpha)}$ are all bounded continuous functions on $M$;

\item $\inf_{x \in M} H^{(\alpha)}(x) > 0$.
\end{enumerate}
Indeed, by the estimates \eqref{part1} and \eqref{part2} given in Proposition \ref{energy'} below, (i) is a consequence of the continuity of $H_{t, r}^{(\alpha)}$ (note that $M$ is compact). To control $\nabla H_{t, r}^{(\alpha)}$, we need the following upper bound on the gradient of $p_t$: There exists $c > 0$ such that for all $t > 0$ and $x, y \in M$,
\beqs
\big| \nabla_x p_t(x, y) \big| \, \leq \, c \e^{-\lambda_1 t} (1 \wedge t)^{-\frac{d + 1}{2}}.
\eeqs
This bound is a consequence of the local Poincar\'e inequality and the heat kernel bound \eqref{upper pt}. Then recalling the definition of $h_{s}^{(1)}$ and using the chain rule, for $s > 0$, we have
\begin{align*}
\| \nabla h_{s}^{(1)} \|_\infty  \, \le \, 2 \int_{s}^{\infty} \sup_{x, y \in M} \big|\nabla_x p_u (x, y) \big| \d u
 \, \le \, c  \e^{-\lambda_1 s} \cdot
\begin{cases}
\log((1 \wedge s)^{-1} + 1), & \textrm{when } d = 1;\\
(1 \wedge s)^{-\frac{d-1}{2}}, & \textrm{when } d \ge 2.
\end{cases}
\end{align*}
Combining this with the gradient bound (see e.g. \cite[Theorem 3.2.4]{BGL14})
$$|\nabla P_u h_{s}^{(1)}| \leq \e^{-K u} P_u (|\nabla h_{s}^{(1)}|), \qquad u > 0,$$
it follows that $|\nabla P_u h_{s}^{(1)}|$ is bounded from above uniformly in $(u, s) \in [0, t] \times [2r, 2r+t]$,
which implies that $\| \nabla H_{t, r}^{(\alpha)} \|_\infty < \infty$. Since $M$ is connected, the continuity of $H_{t, r}^{(\alpha)}$ is obtained. As for the property (ii), by \eqref{general Poin} and Lemma \ref{trace}, there exists $C >0$ such that for any $u > 0$ and $s \in [1, 2]$,
\beqs
\big\| P_u h_s^{(1)} -  \overline{h}^{(1)}_s \big\|_\infty \, \leq \, C \e^{-\lambda_1 u}.
\eeqs
In particular, there exists $T > 0$ such that for any $u \ge T$ and $s \in [1, 2]$,
\beqs
P_u h_s^{(1)} \, \geq \, \frac{1}{2}\overline{h}^{(1)}_s,
\eeqs
so that
\beqs
\inf_{x \in M} H^{(\alpha)} (x) \, \geq \,  \alpha^2 \int_{T}^{\infty} u^{\alpha - 1} (u + 2)^{\alpha - 1} \d u \cdot \int_{1}^{2} \overline{h}^{(1)}_s   \d s  \, > \, 0.
\eeqs

In analogy with Proposition \ref{energy}, the following statement holds.
\begin{propo}[$H^{-1, 2}$-Sobolev norm estimate for $0 < \alpha < \frac12$] \label{energy'}
Assume that $M$ is compact with dimension $d \leq 3$. For $\alpha \in (\frac{(d - 2)^+}{4}, \frac12)$, there exists $c > 0$ such that for any $t \geq t_M$ and $r \in (0, 1]$,
\begin{align*}
 \sup_{\nu \in \Pr(M)} \bigg| R_\alpha(t)^{-1} \E^\nu \left[ \int_M |\nabla (-\L)^{-1} (f_{t, r}^{(\alpha)} - 1)|^2 \d\mu \right] -  \nu(H^{(\alpha)}) \bigg| \, \leq \, c \big( t^{-(1 - 2\alpha)} \,  +  \, Q_{t, r}^{(\alpha)} \big) ,
\end{align*}
where $Q_{t, r}^{(\alpha)}$ is defined by \eqref{notation} and satisfies
\begin{enumerate}[(i)]
\item when $d = 1$,
\begin{equation*}
Q_{t, r}^{(\alpha)} \, \le \, c
\begin{cases}
r^{2\alpha + \frac12}, & \textrm{if } \alpha \in (0, \frac14);\\
r \log(r^{-1} + 1), & \textrm{if } \alpha = \frac14;\\
r, & \textrm{if } \alpha \in (\frac14, \frac12);
\end{cases}
\end{equation*}

\item when $d = 2$,
\begin{equation*}
Q_{t, r}^{(\alpha)} \, \le \, c r^{2\alpha} \log(r^{-1} + 1);
\end{equation*}

\item when $d = 3$,
\begin{equation*}
Q_{t, r}^{(\alpha)} \, \le \, c r^{2\alpha - \frac12}.
\end{equation*}
\end{enumerate}

\end{propo}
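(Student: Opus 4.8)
The plan is to mimic the proof of Proposition \ref{energy}, but to exploit the smallness of $\alpha$ by keeping the full spatial profile $\nu(H^{(\alpha)})$ rather than averaging it out. Recall from \eqref{sobolev norm2} that
\beqs
\E^\nu \left[ \int_M |\nabla (-\L)^{-1} (f_{t, r}^{(\alpha)} - 1)|^2 \d \mu  \right] \, = \, \frac{2 \alpha^2}{t^{2\alpha}} \int_{0}^{t} \int_{u}^{t} \nu \big( P_{u} h_{v - u + 2r}^{(1)}  \big)  u^{\alpha - 1} v^{\alpha - 1} \d v \d u \, = \, t^{-2\alpha} \nu(H_{t, r}^{(\alpha)}),
\eeqs
so since $R_\alpha(t) = t^{-2\alpha}$ for $\alpha \in (0, \tfrac12)$, the left-hand side of the claimed inequality is exactly $\sup_{\nu} |\nu(H_{t, r}^{(\alpha)} - H^{(\alpha)})| \le \| H_{t, r}^{(\alpha)} - H^{(\alpha)} \|_\infty$. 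Thus it suffices to estimate, uniformly in $x$,
\beqs
\big| H_{t, r}^{(\alpha)}(x) - H^{(\alpha)}(x) \big| \, \le \, \big| H_{t, r}^{(\alpha)}(x) - H_{t}^{(\alpha)}(x) \big| \, + \, \big| H_{t}^{(\alpha)}(x) - H^{(\alpha)}(x) \big| \, =: \, \text{(I)} \, + \, \text{(II)},
\eeqs
and to show (I) is controlled by $Q_{t, r}^{(\alpha)}$ (up to a constant) and (II) by $t^{-(1 - 2\alpha)}$.

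For (II), the difference is the integral of $2\alpha^2 (P_u h_{v-u}^{(1)})(x) u^{\alpha - 1} v^{\alpha - 1}$ over the region $\{0 \le u \le v, v > t\}$. I would split this region according to whether $u \le v/2$ or $u > v/2$. On $\{u > v/2\}$ one uses the Poincaré-type bound $\| P_u h_{v-u}^{(1)} - \overline{h}^{(1)}_{v-u} \|_\infty \le c e^{-\lambda_1 u} \| h_{v-u}^{(1)} \|_\infty$ together with $|\overline h^{(1)}_{v-u}| \le c$, giving exponential decay in $v$ and hence a contribution far smaller than $t^{-(1-2\alpha)}$; on $\{u \le v/2\}$ one bounds $P_u h_{v-u}^{(1)} \le c e^{-\lambda_1(v-u)}$ (from Lemma \ref{trace} with $\theta = 1 > 0$, noting $v - u \ge v/2$), so that the $v$-integral over $(t, \infty)$ of $e^{-\lambda_1 v/2} v^{\alpha - 1}$ decays exponentially, and the $u$-integral of $u^{\alpha - 1}$ over $(0, v/2)$ contributes a polynomial factor; the exponential decay again dominates $t^{-(1-2\alpha)}$. (In fact a cruder estimate using $\int_0^v u^{\alpha-1} \d u = v^\alpha/\alpha$ and $h^{(1)}_{v-u} \le c (1 \wedge (v-u))^{-(\frac d2 - 1)}$ with $d \le 3$ suffices; one just needs the tail in $v$.)

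For (I), writing $H_{t,r}^{(\alpha)} - H_t^{(\alpha)} = 2\alpha^2 \iint_{\{0 \le u \le v \le t\}} P_u(h_{v-u+2r}^{(1)} - h_{v-u}^{(1)})(x) u^{\alpha - 1} v^{\alpha - 1} \d u \d v$, the key identity is $h_{s+2r}^{(1)} - h_s^{(1)} = -\int_0^{2r} \partial_a h_{s+a}^{(1)} \d a = \int_0^{2r} h_{s+a}^{(0)} \d a$ (since $-\partial_s h_s^{(\theta)} = h_s^{(\theta - 1)}$), and moreover $P_u h_{\cdot}^{(0)}$ is exactly the integrand appearing in the $L^2$-norm computation \eqref{L2 norm}. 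Interchanging the order of integration, one recognizes $\iint_{\{0 \le u \le v \le t\}} (P_u h_{v-u+a}^{(0)})(x) u^{\alpha-1} v^{\alpha-1} \d u \d v$ as $\tfrac{1}{2\alpha^2}$ times (the $x$-pointwise version of) the quantity whose $\nu$-average is $\E^\nu[\|f_{t, a/2}^{(\alpha)} - 1\|_2^2]$; taking the supremum over $x$ (equivalently, using the $L^\infty$ bounds already established in Lemma \ref{square bound}, which were proved via $\|P_u(\cdot)\|_\infty$ estimates and so hold pointwise), we get $\text{(I)} \le c \int_0^{2r} F_{t, a/2}^{(\alpha)} \d a = 2c \int_0^r F_{t, s}^{(\alpha)} \d s = 2c R_\alpha(t) Q_{t,r}^{(\alpha)}$, so $R_\alpha(t)^{-1} \text{(I)} \le c Q_{t,r}^{(\alpha)}$ as desired. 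Finally, the explicit bounds on $Q_{t,r}^{(\alpha)}$ in (i)--(iii) follow by plugging $r = s$ into Lemma \ref{square bound} and integrating $\int_0^r$; e.g. for $d = 3$, $\alpha \in (0,\tfrac12)$, $F_{t,s}^{(\alpha)} \le c\, t^{-2\alpha} s^{-\frac12} s^{-(1 - 2\alpha)} = c\, t^{-2\alpha} s^{2\alpha - \frac32}$, and $\int_0^r s^{2\alpha - \frac32} \d s \sim r^{2\alpha - \frac12}$ precisely when $2\alpha - \tfrac12 > 0$, i.e. $\alpha > \tfrac14 = \tfrac{(d-2)^+}{4}$ — which is why the hypothesis $\alpha > \tfrac{(d-2)^+}{4}$ enters here.

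I expect the main obstacle to be purely bookkeeping: making the pointwise-in-$x$ versions of the $L^2$-estimates from Lemma \ref{square bound} precise (one must check that every bound there was in fact an $L^\infty$-in-$x$ bound, which it was, since the decomposition \eqref{decomposition} bounded $\I_1$ by a constant independent of $x$ and $\I_2$ via $\|\cdot\|_\infty$), and carefully tracking which integrals converge as $t \to \infty$ under the constraint $\alpha > \tfrac{(d-2)^+}{4}$ — this is exactly the borderline case where the $u$-integral $\int_0^\infty u^{\alpha - 1} (\cdots) \d u$ defining $H^{(\alpha)}$ is finite. No genuinely new estimate beyond Lemmas \ref{trace}, \ref{I4} and \ref{square bound} should be needed.
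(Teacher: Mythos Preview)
Your overall strategy is exactly the paper's: rewrite $R_\alpha(t)^{-1}\E^\nu[\ldots] - \nu(H^{(\alpha)})$ as $\nu(H_{t,r}^{(\alpha)} - H^{(\alpha)})$, then split into (I) $= H_{t,r}^{(\alpha)} - H_t^{(\alpha)}$ and (II) $= H_t^{(\alpha)} - H^{(\alpha)}$. Your treatment of (I) is equivalent to the paper's derivative identity \eqref{derivative} (modulo a harmless sign slip: $h_{s+2r}^{(1)} - h_s^{(1)} = -\int_0^{2r} h_{s+a}^{(0)}\,\d a$, not $+$), and your derivation of the explicit $Q_{t,r}^{(\alpha)}$ bounds from Lemma~\ref{square bound} is correct.

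The gap is in (II). On the region $\{u > v/2,\ v > t\}$ you assert $|\overline h^{(1)}_{v-u}| \le c$ and conclude ``exponential decay in $v$''. Both claims fail. First, $\overline h^{(1)}_s$ is \emph{not} uniformly bounded for $d \ge 2$: by Lemma~\ref{trace} it behaves like $\log(s^{-1})$ for $d=2$ and $s^{-1/2}$ for $d=3$ as $s \downarrow 0$, and on this region $s = v-u$ ranges over $[0, v/2]$. Second, even granting a constant bound, the resulting integral $\int_t^\infty v^{\alpha-1}\int_{v/2}^v u^{\alpha-1}\,\d u\,\d v \sim \int_t^\infty v^{2\alpha-1}\,\d v$ \emph{diverges} for every $\alpha > 0$. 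The Poincar\'e piece $\e^{-\lambda_1 u}\|h_{v-u}^{(1)}\|_\infty$ does decay exponentially, but the centering $\overline h^{(1)}_{v-u}$ you subtracted off does not, and that residual is precisely where the $t^{-(1-2\alpha)}$ comes from. Your ``cruder estimate'' has the same defect: the bound $h^{(1)}_s \le c(1 \wedge s)^{-(d/2-1)}$ omits the factor $\e^{-\lambda_1 s}$ from Lemma~\ref{trace}, without which there is no tail decay in $v$ at all.

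The fix is to drop the Poincar\'e splitting entirely and argue as the paper does: bound $P_u h_{v-u}^{(1)}(x) \le \|h_{v-u}^{(1)}\|_\infty$ directly, use the full Lemma~\ref{trace} bound (including $\e^{-\lambda_1 s}$) to get $\int_0^\infty \|h_s^{(1)}\|_\infty\,\d s < \infty$ for $d \le 3$, and hence $\int_{t\vee u}^\infty \|h_{v-u}^{(1)}\|_\infty v^{\alpha-1}\,\d v \le c\,(t\vee u)^{\alpha-1}$. Then $\int_0^\infty (t\vee u)^{\alpha-1} u^{\alpha-1}\,\d u = O(t^{2\alpha-1})$ since $\alpha < \tfrac12$. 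This is a polynomial, not exponential, bound --- and it is sharp.
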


\begin{proof}
By \eqref{sobolev norm2} and Fubini's Theorem, we have
\beq \label{part0}
\nu ( H_{t, r}^{(\alpha)} ) \, = \, t^{2\alpha}  \E^\nu \left[  \int_M |\nabla (-\L)^{-1} (f_{t, r}^{(\alpha)} - 1)|^2 \d \mu  \right].
\eeq

Next we will evaluate $\big\|H_{t, r}^{(\alpha)} - H_t^{(\alpha)} \big\|_\infty$ and $\big\| H_t^{(\alpha)} - H^{(\alpha)} \big\|_\infty$. Combining \eqref{L2 norm}, \eqref{sobolev norm2} and \eqref{part0}, for fixed $t \ge t_M$ and $x \in M$,
\begin{align} \label{derivative}
\frac{\d}{\d s}\big( H_{t, s}^{(\alpha)}(x) \big)  \, = \, \frac{\d}{\d s} \bigg\{  t^{2\alpha} \, \E^x \left[ \int_M |\nabla (-\L)^{-1} (f_{t, s}^{(\alpha)} - 1)|^2 \d \mu  \right]    \bigg\}  \, = \, - 2 t^{2\alpha} \, \E^x \left[ \int_M |f_{t, s}^{(\alpha)} - 1|^2 \d \mu   \right].
\end{align}
By the dominated convergence Theorem, $\lim_{r\downarrow 0} H_{t, r}^{(\alpha)} = H_t^{(\alpha)}$. Therefore,
\begin{align} \label{part1}
\big\|H_{t, r}^{(\alpha)} - H_t^{(\alpha)} \big\|_\infty  \, \leq \, \int_{0}^{r} \sup_{x \in M} \left|  \frac{\d}{\d s} H_{t, s}^{(\alpha)}(x)  \right| \d s \, \leq \, 2 Q_{t, r}^{(\alpha)},
\end{align}
where $Q_{t, r}^{(\alpha)}$ is defined by \eqref{notation}.

On the other hand, we have
\begin{align*}
\left| H^{(\alpha)} (x) - H_t^{(\alpha)}(x) \right| \, &= \, \int_{0}^{\infty} \bigg( \int_{t \vee u}^{\infty} \big( P_{u} h_{v - u}^{(1)} \big)(x) \, v^{\alpha - 1} \d v \bigg) u^{\alpha - 1} \d u\\
 \,  &\le \, \int_{0}^{\infty} \bigg( \int_{t \vee u}^{\infty} \| h_{v - u}^{(1)} \|_\infty \, v^{\alpha - 1} \d v \bigg) u^{\alpha - 1} \d u.
\end{align*}
Since $d \le 3$ and $\alpha \in (0, \frac12)$, by Lemma \ref{trace},
\begin{align} \label{part2}
\big\| H^{(\alpha)}  - H_t^{(\alpha)}  \big\|_\infty \, \leq \, c  \int_{0}^{\infty} (t \vee u)^{\alpha - 1} u^{\alpha - 1} \d u \, \leq \, c t^{-(1 - 2\alpha)}.
\end{align}
Combining \eqref{part1} and \eqref{part2}, the proof of this proposition is complete after the use of Lemma \ref{square bound}.
\end{proof}

\begin{remark} \label{remark1}
Similar to \eqref{derivative}, it is worth making the following observation. For any $0 < r_* < r \le 1$,
\begin{align*}
& \int_M |\nabla (-\L)^{-1} (f_{t, r}^{(\alpha)} - f_{t, r_*}^{(\alpha)})|^2 \d \mu \, = \, \sum_{i = 1}^{\infty} \frac{|\mu_t^{(\alpha)}(\phi_i)|^2}{\lambda_i} \big( \e^{-r_*\lambda_i} - \e^{-r\lambda_i} \big)^2 \\
\quad \, & \le \, \sum_{i = 1}^{\infty} \frac{|\mu_t^{(\alpha)}(\phi_i)|^2}{\lambda_i} \big( \e^{-2r_*\lambda_i} - \e^{-2r\lambda_i} \big) \, = \,
2 \int_{r_*}^{r} \| f_{t, s}^{(\alpha)} - 1 \|^2_2  \d s.
\end{align*}
\end{remark}

By Proposition \ref{energy} and \ref{energy'}, we yield the following conclusion.

\begin{cor} \label{final}
Under the assumptions of Theorem \ref{main}. Then
\begin{align*}
\lim_{(r, \, t) \to (0, \, \infty)} R_\alpha(t)^{-1} \, \E^\nu \left[ \int_M |\nabla (-\L)^{-1} (f_{t, r}^{(\alpha)} - 1)|^2 \d\mu \right] \, = \,
\begin{cases}
\nu ( H^{(\alpha)} ), & \textrm{if } \alpha \in (\frac{(d - 2)^+}{4}, \frac12);\\
\frac12 \sum_{i = 1}^{\infty} \lambda_i^{-2}, & \textrm{if } \alpha = \frac12;\\
\frac{2 \alpha^2}{2\alpha - 1} \sum_{i = 1}^{\infty} \lambda_i^{-2}, & \textrm{if } \alpha \in (\frac12, \infty),
\end{cases}
\end{align*}
where $H^{(\alpha)}$ is defined by \eqref{defofF}. Moreover, above limits converge uniformly in $\nu \in \Pr(M)$.
\end{cor}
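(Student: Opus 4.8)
The plan is to combine Propositions \ref{energy} and \ref{energy'} with a suitable choice of the regularization parameter $r = r(t)$ that tends to $0$ as $t \to \infty$, and then verify that all error terms vanish uniformly in $\nu$. Concretely, I would fix $\beta > 0$ and set $r(t) = t^{-\beta}$, so that the double limit $(r, t) \to (0, \infty)$ is realized along this curve; since the statement is about the genuine double limit, I must actually argue that the estimates in those two propositions hold with error bounds that are uniform in $r \in (0,1]$ for each fixed large $t$, and then let $r \downarrow 0$ first and $t \to \infty$ afterwards (or vice versa) — both orders are covered because the error bounds split into a piece depending only on $r$ and a piece depending only on $t$.

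For the case $\alpha \in (\tfrac12, \infty)$, Proposition \ref{energy} gives
\[
\sup_{\nu} \Bigl| R_\alpha(t)^{-1} \E^\nu\!\Bigl[\int_M |\nabla(-\L)^{-1}(f_{t,r}^{(\alpha)}-1)|^2 \d\mu\Bigr] - \tfrac{2\alpha^2}{2\alpha-1}\sum_i \lambda_i^{-2}\Bigr| \;\le\; e(r) + c\, t^{-2(\alpha\wedge 1)+1},
\]
with $e(r) \to 0$ as $r \downarrow 0$ (for all $d \le 3$) and $t^{-2(\alpha\wedge1)+1} \to 0$ as $t \to \infty$ since $\alpha > \tfrac12$. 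The case $\alpha = \tfrac12$ is identical with the constant $\tfrac12\sum_i\lambda_i^{-2}$ and the extra error $c\log(t)^{-1} \to 0$. For $\alpha \in (\tfrac{(d-2)^+}{4}, \tfrac12)$, Proposition \ref{energy'} gives the bound $c\bigl(t^{-(1-2\alpha)} + Q_{t,r}^{(\alpha)}\bigr)$ around $\nu(H^{(\alpha)})$; here $t^{-(1-2\alpha)} \to 0$ as $t\to\infty$ because $\alpha < \tfrac12$, and the listed bounds on $Q_{t,r}^{(\alpha)}$ (namely $O(r^{2\alpha+1/2})$, $O(r\log(r^{-1}+1))$, $O(r^{2\alpha-1/2})$ according to dimension, all with positive exponent of $r$ once the constraint $\alpha > \tfrac{(d-2)^+}{4}$ is imposed) show $Q_{t,r}^{(\alpha)} \to 0$ as $r \downarrow 0$. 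In each regime the error bound is a sum of a function of $r$ alone tending to $0$ and a function of $t$ alone tending to $0$, so the iterated limits in either order agree and equal the displayed constant; since every bound is taken after $\sup_{\nu\in\Pr(M)}$, the convergence is uniform in $\nu$, which is the final assertion.

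The only genuine subtlety — and the step I would be most careful with — is bookkeeping the constraint $\alpha \in (\tfrac{(d-2)^+}{4}, \tfrac12)$ in the small-$\alpha$ case: this is exactly what guarantees that the exponent of $r$ in $Q_{t,r}^{(\alpha)}$ is strictly positive in every dimension $d \le 3$ (for $d=3$ one needs $2\alpha - \tfrac12 > 0$, i.e. $\alpha > \tfrac14 = \tfrac{(3-2)^+}{4}$; for $d=1,2$ the threshold is $0$), and it is also what makes $\nu(H^{(\alpha)})$ finite and $H^{(\alpha)}$ a well-defined bounded continuous function via the integrability discussed before the statement of Proposition \ref{energy'}. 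Once this matching of exponents to the dimensional threshold is checked, the corollary follows by simply reading off the limits. I would state this as a short paragraph invoking Propositions \ref{energy} and \ref{energy'} and the limits $\lim_{r\downarrow 0} e(r) = \lim_{r \downarrow 0} Q_{t,r}^{(\alpha)} = 0$ and $\lim_{t\to\infty} t^{-(1-2\alpha)} = \lim_{t\to\infty}\log(t)^{-1} = \lim_{t\to\infty}t^{-2(\alpha\wedge1)+1} = 0$, with no further computation needed. \cqfd
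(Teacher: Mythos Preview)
Your proposal is correct and follows exactly the paper's approach: the paper simply states that Corollary \ref{final} is a direct consequence of Propositions \ref{energy} and \ref{energy'}, and your argument spells out precisely why --- the error bounds in those propositions are sums of a term depending only on $r$ (namely $e(r)$ or the upper bound on $Q_{t,r}^{(\alpha)}$) and a term depending only on $t$, each tending to zero, which gives the double limit uniformly in $\nu$. Your check that the constraint $\alpha > \tfrac{(d-2)^+}{4}$ is exactly what forces the exponent of $r$ in the $Q_{t,r}^{(\alpha)}$ bound to be positive is the only point worth making explicit, and you have done so.
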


\section{Concentration and density fluctuation bounds} \label{s3}
\setcounter{equation}{0}

In this section, we prove a weak form of Bernstein-type inequality for $\mu_t^{(\alpha)}$ and apply it to establish the fluctuation bounds. We begin with the following lemma, which refines \cite[Lemma 2.5]{WZ19}.

\begin{lem} \label{fluctuation1}
For $\alpha > 0$, there exists $c > 0$ such that for any $1 \le p < \infty$, $t \geq t_M$ and bounded measurable function $f : M \to \R$ satisfying $\mu(f) = 0$,
\beqs
\sup_{\nu \in \Pr(M)} \E^\nu \big[ |\mu_t^{(\alpha)}(f) |^p \big]^{\frac{1}{p}} \, \leq c \sqrt{p} \cdot  R_\alpha(t)^{\frac12} \| f \|_\infty.
\eeqs

\end{lem}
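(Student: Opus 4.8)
The plan is to reduce everything to the case $p \geq 2$ via Hölder's inequality, and then prove the bound for even integers $p = 2m$ (which gives all $p \geq 2$ up to constants by monotonicity of $L^p$-norms, inflating $p$ to the next even integer). For $p = 2m$, I would expand
\[
\E^\nu\big[ |\mu_t^{(\alpha)}(f)|^{2m} \big] \, = \, \frac{\alpha^{2m}}{t^{2m\alpha}} \int_{[0,t]^{2m}} \E^\nu\Big[ \prod_{k=1}^{2m} f(X_{s_k}) \Big] \prod_{k=1}^{2m} s_k^{\alpha - 1} \, \d s_1 \cdots \d s_{2m},
\]
and symmetrize so as to integrate over the ordered simplex $0 \le s_1 \le \cdots \le s_{2m} \le t$, picking up a factor $(2m)!$. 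On the ordered simplex the Markov property lets me write the expectation as an alternating product of semigroup applications: $\E^\nu[\prod_k f(X_{s_k})] = \nu\big( P_{s_1}(f \cdot P_{s_2 - s_1}(f \cdots P_{s_{2m} - s_{2m-1}} f)) \big)$. Since $\mu(f) = 0$, the $L^\infty$-Poincaré inequality \eqref{general Poin} gives $\|P_{u} (f \cdot g)\|_\infty \le c\, \e^{-\lambda_1 u} \|f\|_\infty \|g\|_\infty$ whenever $\mu(fg)$ need not vanish but we instead bound crudely; more carefully, I want to peel off the gaps $u_k = s_{k+1} - s_k$ one at a time, using $\|f \cdot P_{u} h\|_\infty \le \|f\|_\infty \|P_u h\|_\infty$ and then $\|P_u h\|_\infty \le c \e^{-\lambda_1 u}\|h\|_\infty$ when $\mu(h) = 0$. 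The issue is that $f \cdot (\text{stuff})$ no longer has zero mean, so I cannot iterate the Poincaré contraction at every step — only at alternate steps. This forces the standard trick: group the $2m$ times into $m$ consecutive pairs $(s_{2j-1}, s_{2j})$, bound $f(X_{s_{2j-1}}) f(X_{s_{2j}})$ using a conditional-expectation estimate that produces a factor $\e^{-\lambda_1(s_{2j} - s_{2j-1})}$, and control the "bridging" over the pair boundaries by plain contractivity ($\|P_u\|_{\infty\to\infty} \le 1$).

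Concretely, the key combinatorial estimate I would isolate as the workhorse is: on the ordered simplex,
\[
\Big| \E^\nu\Big[ \prod_{k=1}^{2m} f(X_{s_k}) \Big] \Big| \, \le \, c^{m} \|f\|_\infty^{2m} \prod_{j=1}^{m} \e^{-\lambda_1 (s_{2j} - s_{2j-1})},
\]
obtained by conditioning successively and applying \eqref{general Poin} to the inner-most mean-zero factor in each pair while using $\|P_\cdot\|_{\infty \to \infty} \le 1$ elsewhere. Plugging this in,
\[
\E^\nu\big[ |\mu_t^{(\alpha)}(f)|^{2m} \big] \, \le \, \frac{(2m)!\, c^m \alpha^{2m} \|f\|_\infty^{2m}}{t^{2m\alpha}} \int_{0 \le s_1 \le \cdots \le s_{2m} \le t} \prod_{j=1}^{m} \e^{-\lambda_1(s_{2j} - s_{2j-1})} \prod_{k=1}^{2m} s_k^{\alpha-1} \, \d s.
\]
Now I would bound this iterated integral by a product of $m$ single "gap integrals" times $m$ trivial integrals of $s^{\alpha - 1}$. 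The crucial point, and where the three regimes in $R_\alpha$ enter, is the estimate of a typical factor
\[
\int_0^t \Big( \int_{s_{2j-1}}^{t} \e^{-\lambda_1(v - s_{2j-1})} v^{\alpha - 1} \, \d v \Big) s_{2j-1}^{\alpha - 1}\, \d s_{2j-1},
\]
which by Lemma \ref{I4} (with $y = t\lambda_1$, after rescaling) is $O(1)$ if $\alpha < \tfrac12$, $O(\ln t)$ if $\alpha = \tfrac12$, and $O(t^{2\alpha - 1})$ if $\alpha > \tfrac12$ — i.e. exactly $O(t^{2\alpha} R_\alpha(t))$ in all cases. Each of the $m$ pairs contributes one such factor (the "free" endpoint of each pair integrated against $s^{\alpha-1}\d s$ over $[0,t]$ without exponential decay gives a factor $t^{2\alpha}/(2\alpha)$-type contribution that I need to track so the powers of $t$ balance). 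Carefully bookkeeping, one gets
\[
\E^\nu\big[ |\mu_t^{(\alpha)}(f)|^{2m} \big] \, \le \, (2m)! \, (c\, R_\alpha(t))^{m} \|f\|_\infty^{2m},
\]
and then $\E^\nu[|\mu_t^{(\alpha)}(f)|^{2m}]^{1/(2m)} \le ((2m)!)^{1/(2m)} \sqrt{c R_\alpha(t)} \|f\|_\infty \le c' \sqrt{m}\, \sqrt{R_\alpha(t)}\,\|f\|_\infty$ by Stirling, which is the claimed bound for $p = 2m$; general $p$ follows by taking the smallest even integer $\ge p$.

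The main obstacle I anticipate is the bookkeeping of the nested simplex integral: ensuring that the pairing scheme genuinely produces $m$ decaying factors each of "size" $R_\alpha(t)$ and that the remaining non-decaying integrations in $s^{\alpha-1}\,\d s$ contribute precisely the $t^{2m\alpha}$ needed to cancel the prefactor — without accidentally losing or gaining a power of $t$ or a factor growing faster than $c^m$ in $m$ (which would ruin the $\sqrt p$ dependence). A clean way to organize this is to prove the bound $\int_{0 \le s_1 \le \cdots \le s_{2m}\le t} \prod_{j} \e^{-\lambda_1 (s_{2j}-s_{2j-1})} \prod_k s_k^{\alpha-1}\,\d s \le (c\, t^{2\alpha} R_\alpha(t))^m / (\text{something} \ge 1)$ by induction on $m$, integrating out the last pair $(s_{2m-1}, s_{2m})$ first: the inner integral $\int_{s_{2m-1}}^t \e^{-\lambda_1(v - s_{2m-1})} v^{\alpha-1}\d v$ is monotone, and integrating it against $s_{2m-1}^{\alpha-1}$ over $[0, s_{2m-2} \vee \cdot]$—actually over $[s_{2m-2}, t]$—reduces to Lemma \ref{I4} after extending the lower limit down to $0$ (which only increases the integral). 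The induction hypothesis then handles the remaining $2(m-1)$ variables. I would also need the trivial observation $R_\alpha(t) \le c$ for $t \ge t_M$ so that no positive power of $R_\alpha$ causes trouble, and $R_\alpha(t)^m$ combines correctly.
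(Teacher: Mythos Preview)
Your ``key combinatorial estimate''
\[
\Big|\E^\nu\Big[\prod_{k=1}^{2m} f(X_{s_k})\Big]\Big| \,\le\, c^m \|f\|_\infty^{2m} \prod_{j=1}^m \e^{-\lambda_1(s_{2j}-s_{2j-1})}
\]
is not just unproven by the pairing scheme you describe --- it is \emph{false} for general $\nu$. Take $m=2$, $\nu=\delta_x$, and times $s_1=0$, $s_2=N$, $s_3=s_4=N$ (or $s_3,s_4$ within $O(1)$ of $N$). Your bound predicts decay like $\e^{-\lambda_1 N}$, but by the Markov property
\[
\E^{\delta_x}\big[f(X_0)f(X_N)^3\big] \,=\, f(x)\cdot P_N(f^3)(x) \,\longrightarrow\, f(x)\,\mu(f^3) \qquad (N\to\infty),
\]
which need not vanish. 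The reason the pairing fails is exactly the one you flagged: after the innermost application of \eqref{general Poin}, the function $f\cdot P_{u}f$ is no longer mean-zero, so the next $P$ across a pair boundary does not contract exponentially, and the ``free'' factor $f$ in the next pair cannot resurrect the decay on its own. Splitting off the constant $\mu(g)$ produces extra cross-terms (with decay in the wrong gaps, e.g.\ $s_1$ or $s_3-s_2$) which individually integrate to the right size but do not recombine into your product formula.

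The paper sidesteps any pointwise control of the $2k$-point correlator. Setting $I_{2k}^{(\alpha)}(t)=t^{2\alpha k}\E^\nu[|\mu_t^{(\alpha)}(f)|^{2k}]$, it peels off only the \emph{last} pair $(s_{2k-1},s_{2k})$ via the Markov property and Fubini to obtain
\[
I_{2k}^{(\alpha)}(t) \,=\, 2k(2k-1)\int_{\Delta_1(t)} \E^\nu\Big[g(u,v)\Big|\!\int_0^u f(X_s)s^{\alpha-1}\d s\Big|^{2k-2}\Big] u^{\alpha-1}v^{\alpha-1}\d u\,\d v,
\]
with $g(u,v)=(fP_{v-u}f)(X_u)$, and then applies H\"older (exponents $k$ and $\tfrac{k}{k-1}$) to get the nonlinear integral inequality
\[
I_{2k}^{(\alpha)}(t) \,\le\, c\,k^2\|f\|_\infty^2 \int_0^t I_{2k}^{(\alpha)}(u)^{\frac{k-1}{k}}\,\e^{\lambda_1 u}u^{\alpha-1}G^{(\alpha)}(\lambda_1 u)\,\d u.
\]
This is closed by the generalized Gronwall (Bihari) inequality, yielding $I_{2k}^{(\alpha)}(t)\le\big(ck\|f\|_\infty^2\int_0^{t\lambda_1}\e^s s^{\alpha-1}G^{(\alpha)}(s)\d s\big)^k$, after which Lemma~\ref{I4} gives exactly the factor $t^{2\alpha}R_\alpha(t)$ per pair. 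The crucial difference from your plan is that H\"older packages all the earlier times back into $I_{2k}^{(\alpha)}(u)$ itself, so one never needs the iterated Poincar\'e that your pointwise bound would require.
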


\begin{proof} By H\"older's inequality, we only need to consider the case $p = 2k$, $k \in \N^+$. For any $k \in \N^+$, we set
$$
I_{2k}^{(\alpha)}(t) \, := \, t^{2\alpha k} \, \E^\nu \big[ |\mu_t^{(\alpha)}(f) |^{2k} \big] \, = \, \E^\nu  \left[ \left| \int_{0}^{t} f(X_s) s^{\alpha - 1} \d s  \right|^{2k} \right].
$$
Then it is easily seen that
\beq \label{I1}
I_{2k}^{(\alpha)}(t) \, = \,  (2k)! \, \E^\nu \left[ \int_{\Delta_k(t)} f(X_{s_1})\cdots f(X_{s_{2k}})\,  s_1^{\alpha -1} \d s_1 \cdots  \, s_{2k}^{\alpha -1}\d s_{2k} \right],
\eeq
where $\Delta_k (t)$ denotes the simplex $\big\{(s_1, \ldots, s_{2k}) \in [0, t]^{2k}; 0 \le s_1 \le \cdots \le s_{2k} \le t \big\}$. By the Markov property,
$$
\E^\nu \big[f(X_{s_{2k}}) | \, X_t, \, t \le s_{2k-1}    \big]  \, = \, (P_{s_{2k} - s_{2k-1}} f)(X_{s_{2k-1}}).
$$
For any $v \ge u \ge 0$, we set $g(u, v) = (fP_{v - u}f)(X_{u})$. We therefore obtain
\begin{align*}
I_{2k}^{(\alpha)}(t) & \, = \, (2k)! \, \E^\nu \bigg[ \int_{0}^{t} f(X_{s_1}) s_1^{\alpha - 1} \d s_1 \int_{s_1}^{t} f(X_{s_2}) s_2^{\alpha - 1} \d s_2 \cdots \int_{s_{2k-3}}^{t} \! f(X_{s_{2k-2}}) s_{2k-2}^{\alpha - 1} \d s_{2k-2}\\
 & \qquad  \quad \, \cdot \, \int_{s_{2k-2}}^{t} \!  s_{2k-1}^{\alpha - 1} \d s_{2k-1} \! \int_{s_{2k-1}}^t \! g(s_{2k-1}, s_{2k}) s_{2k}^{\alpha - 1} \d s_{2k}   \bigg].
\end{align*}
By Fubini's theorem, $I_{2k}^{(\alpha)}(t)$ may be rewritten as
\begin{align*}
I_{2k}^{(\alpha)}(t) & \,= \,  (2k)! \, \E^\nu \bigg[ \int_{\Delta_1(t)}  \bigg( \int_{\Delta_{k-1}(u)} f(X_{s_1})\cdots f(X_{s_{2k-2}}) s_1^{\alpha -1 } \d s_1 \cdot \cdots s_{2k-2}^{\alpha -1 }\d s_{2k-2}  \bigg) \\
& \qquad \quad \, \cdot \, g(u, v)  u^{\alpha-1} v^{\alpha-1} \d u \d v   \bigg] \\
& \, = \,  \frac{(2k)!}{(2k-2)!} \int_{\Delta_1(t)} \E^\nu \bigg[ g(u, v) \left|\int_0^{u} f(X_s) s^{\alpha - 1}\d s \right|^{2k-2}   \bigg] u^{\alpha-1} v^{\alpha-1}  \d u \d v.
\end{align*}
By H\"older's inequality,
\begin{align} \label{I2}
I_{2k}^{(\alpha)}(t)  &\, \le \,  2k (2k - 1) \int_{\Delta_1(t)}  \E^\nu \big[ |g(u, v)|^k \big]^{\frac{1}{k}} \cdot \E^\nu \bigg[ \left|\int_0^{u} f(X_s)\d s \right|^{2k} \bigg]^{\frac{k-1}{k}} u^{\alpha-1} v^{\alpha-1}\d u \d v \nonumber\\
 & \, = \,  2k (2k - 1) \int_{\Delta_1(t)} \E^\nu \big[ |g(u, v)|^k \big]^{\frac{1}{k}} I_{2k}^{(\alpha)}(u)^{\frac{k-1}{k}} u^{\alpha-1} v^{\alpha-1} \d u \d v.
\end{align}
Notice that $\mu(f) = 0$, by the $L^\infty$-Poincar\'e inequality \eqref{general Poin}, we have
\beqs
|g(u, v)| \, \leq \, \| f \|_\infty \cdot \| P_{v - u} f \|_\infty \, \leq \,  c \e^{-\lambda_1(v - u)} \| f \|_\infty^2.
\eeqs
Combined with \eqref{I2}, it follows that
\begin{align*}
I_{2k}^{(\alpha)}(t)  \leq   c k^2 \lambda_1^{-\alpha} \| f \|_\infty^2 \int_{0}^{t} I_{2k}^{(\alpha)}(u)^{\frac{k-1}{k}} \cdot \e^{\lambda_1 u} u^{\alpha - 1}  G^{(\alpha)}(\lambda_1 u) \d u,
\end{align*}
where $G^{(\alpha)}$ is defined by \eqref{gamma}. Then by the generalized Gronwall's inequality (see \cite{B56}),
\begin{align*}
I_{2k}^{(\alpha)}(t) & \, \leq \, \bigg(  c k  \lambda_1^{-\alpha} \| f \|_\infty^2 \int_{0}^{t} \e^{\lambda_1 u} u^{\alpha - 1}  G^{(\alpha)}( \lambda_1 u)   \d u \bigg)^k\nonumber\\
& \, \le \, \bigg(  c k  \lambda_1^{-2\alpha} \| f \|_\infty^2 \int_{0}^{t \lambda_1} \e^s s^{\alpha - 1}  G^{(\alpha)}(s)  \d s \bigg)^k.
\end{align*}
Using Lemma \ref{I4}, we conclude the proof of Lemma \ref{fluctuation1}.
\end{proof}

\begin{remark}
\begin{enumerate}[(1)]
\item In the case $\alpha = 1$, the central limit theorem for Markov processes tells us that (see e.g. \cite{KV84})
\beqs
\sqrt{t} \mu_t(f) \, \to \, \mathrm{N}(0, \sigma^2(f)) \ \text{in\ law\ as\ }t\to\infty,
\eeqs
where
\beqs
\sigma^2(f) \, : = \, 4 \int_{0}^{\infty} \| P_s f \|_2^2 \d s \, \le \, 2 \lambda_1^{-1} \| f \|_2^2 \, \le \, 2 \lambda_1^{-1} \| f \|_\infty^2,
\eeqs
and $\mathrm{N}(0, \sigma^2(f))$ is the centered Gaussian distribution with variance $\sigma^2(f)$. By the feature of Gaussian distribution, this lemma is sharp in respect to the powers of both $p$ and $t$.

\item For more on Bernstein-type inequalities for symmetric Markov processes, we refer the reader to \cite{L98, GGW14}.
\end{enumerate}
\end{remark}

It is well-known that the moment estimates in Lemma \ref{fluctuation1} imply a probabilistic tail estimate of exponential decay (see e.g. \cite[Exercise 2.3.8]{T21}). Then the following concentration inequality is achieved.
\begin{cor}[Concentration inequality for $\mu_t^{(\alpha)}$] \label{tail}
For $\alpha > 0$, there exists $c > 0$ such that for any $\eta > 0$, $t\geq t_M$ and bounded measurable function $f : M \to \R$ satisfying $\mu(f) = 0$,
\beqs
\sup_{\nu \in \Pr(M)} \PP^\nu \big( |\mu_t^{(\alpha)}(f) | > \eta \big) \, \leq \, c \exp \left( - \frac{ \eta^2}{c R_\alpha(t) \| f \|_\infty^2}   \right)
\eeqs
\end{cor}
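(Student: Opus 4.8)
The goal is to upgrade the polynomial moment bounds of Lemma \ref{fluctuation1} into an exponential tail estimate. The plan is to run the standard moment-to-tail passage, namely bound $\PP^\nu(|\mu_t^{(\alpha)}(f)| > \eta)$ by $\eta^{-p}\,\E^\nu[|\mu_t^{(\alpha)}(f)|^p]$ for an even integer $p$, insert the estimate $\E^\nu[|\mu_t^{(\alpha)}(f)|^p]^{1/p} \le c\sqrt{p}\,R_\alpha(t)^{1/2}\|f\|_\infty$ from Lemma \ref{fluctuation1}, and then optimize over $p$. Writing $\sigma := c R_\alpha(t)^{1/2}\|f\|_\infty$, Markov's inequality for the $p$-th moment gives $\PP^\nu(|\mu_t^{(\alpha)}(f)| > \eta) \le (\sigma\sqrt{p}/\eta)^p$, and choosing $p$ comparable to $\eta^2/(e\sigma^2)$ makes the right-hand side of order $\exp(-\eta^2/(c'\sigma^2))$, which is exactly the claimed form after absorbing constants. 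Since Lemma \ref{fluctuation1} is stated for all real $1 \le p < \infty$ (via H\"older from the even-integer case), one need not worry about rounding $p$ to an integer, though one could alternatively restrict to $p = 2k$ and round, at the cost of only a harmless constant.

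Concretely, first I would set $p_0 = \eta^2/(e c^2 R_\alpha(t)\|f\|_\infty^2)$. If $p_0 \ge 1$, apply Lemma \ref{fluctuation1} with $p = p_0$ to get $\E^\nu[|\mu_t^{(\alpha)}(f)|^{p_0}] \le (c\sqrt{p_0}\,R_\alpha(t)^{1/2}\|f\|_\infty)^{p_0}$, so that Markov's inequality yields
\beqs
\PP^\nu\big(|\mu_t^{(\alpha)}(f)| > \eta\big) \, \le \, \eta^{-p_0}\,\E^\nu\big[|\mu_t^{(\alpha)}(f)|^{p_0}\big] \, \le \, \left(\frac{c\sqrt{p_0}\,R_\alpha(t)^{1/2}\|f\|_\infty}{\eta}\right)^{p_0} \, = \, e^{-p_0/2} \, = \, \exp\!\left(-\frac{\eta^2}{2ec^2 R_\alpha(t)\|f\|_\infty^2}\right),
\eeqs
using that $c\sqrt{p_0}\,R_\alpha(t)^{1/2}\|f\|_\infty/\eta = p_0^{-1/2}$ by the choice of $p_0$. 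If instead $p_0 < 1$, i.e. $\eta^2 < e c^2 R_\alpha(t)\|f\|_\infty^2$, then the exponential $\exp(-\eta^2/(cR_\alpha(t)\|f\|_\infty^2))$ is bounded below by a positive absolute constant, so the inequality holds trivially after enlarging the leading constant $c$. Collecting the two cases and renaming constants gives the stated bound with a single absolute $c$ (depending only on $M$ and $\alpha$, through the constant in Lemma \ref{fluctuation1}).

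There is essentially no obstacle here; the content is entirely in Lemma \ref{fluctuation1}, and the present corollary is a routine extraction. The only mild points to be careful about are: (a) handling the small-$\eta$ regime where the naive choice of $p$ would be less than $1$, which is dispatched by the triviality remark above; and (b) keeping track of the fact that the constant $c$ in the exponent and the constant $c$ in front need not be the same, which is why the statement writes $c\exp(-\eta^2/(cR_\alpha(t)\|f\|_\infty^2))$ with the understanding that $c$ changes from line to line. This is precisely the standard argument referenced as \cite[Exercise 2.3.8]{T21}, so I would simply present the optimization cleanly and invoke that the moment bound of Lemma \ref{fluctuation1} is what feeds it.
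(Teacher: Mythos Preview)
Your proposal is correct and is exactly the approach the paper takes: the paper does not spell out a proof but simply remarks that the moment bounds of Lemma~\ref{fluctuation1} imply the sub-Gaussian tail by the standard optimization over $p$ (citing \cite[Exercise~2.3.8]{T21}), which is precisely what you carry out. One small slip: the ratio $c\sqrt{p_0}\,R_\alpha(t)^{1/2}\|f\|_\infty/\eta$ equals $e^{-1/2}$, not $p_0^{-1/2}$, but this still gives $(e^{-1/2})^{p_0}=e^{-p_0/2}$ so your conclusion is unaffected.
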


If we take $f(x) = p_r(x, y) - 1$, recalling \eqref{def0}, Corollary \ref{tail} quantifies the size of the event $\{ |f_{t, r}^{(\alpha)}(y) - 1| > \eta  \}$. Since $M$ is connected, for $y \ne z$, the events $\{ |f_{t, r}^{(\alpha)}(y) - 1| > \eta  \}$ and $\{ |f_{t, r}^{(\alpha)}(z) - 1| > \eta  \}$ are not independent. More precisely, if we take $f(x) = p_r(x, y) - p_r(x, z)$ in Corollary \ref{tail}, then for $\eta > 0$ and $t\geq t_M$,
\beq \label{bound0}
\sup_{\nu \in \Pr(M)} \PP^\nu \big( |f_{t, r}^{(\alpha)}(y) - f_{t, r}^{(\alpha)}(z)| > \eta \big) \, \leq \, c \exp \left( - \frac{\eta^2}{c R_\alpha(t) \|  p_r(\cdot, y) - p_r(\cdot, z)  \|_\infty^2}   \right).
\eeq
By the local Poincar\'e inequality, there exists $c > 0$ such that for any $x \in M$ and $r \in (0, 1]$,
\beqs
\Lip(p_r(x, \cdot)) \, \leq \, c r^{-\frac{d + 1}{2}}.
\eeqs
which implies that
\beq \label{Bound1}
\|  p_r(\cdot, y) - p_r(\cdot, z)  \|_\infty \, \le \, c r^{-\frac{d + 1}{2}} \rho(y, z).
\eeq
On the other hand, by the heat kernel bound \eqref{upper pt},
\beq \label{Bound2}
\|  p_r(\cdot, y) - p_r(\cdot, z)  \|_\infty \, \le \,  c r^{-\frac{d}{2}}.
\eeq
Now, for each $u > 0$, we define the truncated geodesic distance on $M$ at the level $u$, denoted by $\rho_u$, as follows:
\beq \label{rhou}
\rho_u(x, y) \, = \, \rho(x, y) \wedge u, \quad x, y\in M.
\eeq
Then \eqref{Bound1} together with \eqref{Bound2} yields that, for $r \in (0, 1]$,
\beqs
\|  p_r(\cdot, y) - p_r(\cdot, z)  \|_\infty \, \le \,  c r^{-\frac{d + 1}{2}}  \rho_{r^{\frac12}}(y, z).
\eeqs
Combining this bound with \eqref{bound0}, we obtain
\beq \label{Lip}
\sup_{\nu \in \Pr(M)} \PP^\nu \big( |f_{t, r}^{(\alpha)}(y) - f_{t, r}^{(\alpha)}(z)| > \eta \big) \, \leq \, c \exp \left( - \frac{r^{d + 1} \eta^2}{c R_\alpha(t) \rho_{r^{\frac12}}(y, z)^2}   \right).
\eeq

Therefore, for fixed $t \ge t_M$ and $r \in (0, 1]$, we can view $(f_{t, r}^{(\alpha)}(y))_{y \in M}$ as a sub-Gaussian process indexed by $M$ and then apply the Dudley's entropy integral bound to control its size. The proof of the following lemma could be found in \cite[Chapter 11]{LT91} or \cite[Chapter 2]{T21}.
\begin{lem}[Dudley's entropy integral bound] \label{generic chaining}
Let $(T, \delta)$ be a metric space. For each $\epsilon > 0$, let $N(T, \delta, \epsilon)$ be the smallest number of $\epsilon$-closed balls needed to cover $T$. Let $(Z_t)_{t \in T}$ be a stochastic process indexed by $T$. Assume that $(Z_t)_{t \in T}$ satisfies the sub-Gaussian inequality: There exist $A, B > 0$ such that for any $s, t \in T$ and $\eta > 0$,
\begin{align} \label{increment}
\PP(|Z_s - Z_t| > \eta) \, \leq \,  A \exp \left( - \frac{\eta^2}{B \delta(s, t)^2} \right).
\end{align}
Then there exists $C = C(A) > 0$ such that for any $\eta > 0$,
\begin{align*}
\PP(\sup_{s, t \in T} |Z_s - Z_{t}| > \eta) \, \leq \, C \exp\left( - \frac{\eta^2}{C B \EE(T, \delta)^2} \right),
\end{align*}
and for any $q \geq 1$, there exists $C_q = C_q(A) > 0$ such that
\begin{align*}
\E \big[ \sup_{s, t \in T} |Z_s - Z_t|^q \big]^{\frac1q} \, \leq \, C_q \sqrt{B} \EE (T, \delta),
\end{align*}
where $\EE (T, \delta)$ is defined by
\beqs
\EE(T, \delta) \, = \, \int_{0}^{\infty} \sqrt{\log N(T, \delta, \epsilon) } \, \d \epsilon.
\eeqs
It is worth noting that if $T$ is not countable, the following condition is required:
\begin{align} \label{uncountable}
\E \big[ \sup_{s, t \in T} |Z_s - Z_t| \big] \, = \, \sup \bigg\{ \E \big[ \sup_{s, t \in F} |Z_s - Z_t| \big] : F \subset T, \,\, \textrm{$F$ is finite}  \bigg\}.
\end{align}
\end{lem}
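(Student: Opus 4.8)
The plan is to run the classical Dudley chaining argument in three stages: reduce to a finite index set, build a dyadic chain and bound each of its links, and then sum the links against the entropy integral. For the reduction, hypothesis (\ref{uncountable})---applied not only to $\E[\sup_{s,t\in T}|Z_s-Z_t|]$ but, by the same monotone-limit reasoning, to its tail and to its $L^q$-norms---allows every estimate proved uniformly over finite $F\subset T$ to be transferred to $T$ itself; and since a minimal cover of $T$ by closed $\epsilon$-balls induces a cover of $F$ by no more closed $2\epsilon$-balls centered in $F$, one has $N(F,\delta,2\epsilon)\le N(T,\delta,\epsilon)$ and hence $\EE(F,\delta)\le2\,\EE(T,\delta)$. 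So I may assume $T$ finite, and also $\EE(T,\delta)<\infty$, which forces $\diam(T)<\infty$; as $N(T,\delta,\epsilon)\ge2$ for $\epsilon<\diam(T)/2$, this yields $\diam(T)\le c\,\EE(T,\delta)$, which is what permits stating the first display with $\EE$ in place of $\diam$.

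For the chain, put $\epsilon_k=2^{-k}\diam(T)$, pick $T_k\subset T$ with $|T_k|=N(T,\delta,\epsilon_k)=:N_k$ such that every point of $T$ lies within $\epsilon_k$ of $T_k$ (so $N_0=1$, $T_0=\{t_0\}$), and let $\pi_k:T\to T_k$ choose a nearest point. Since $T$ is finite, $\pi_k$ is the identity for large $k$, so $Z_t-Z_{t_0}=\sum_{k\ge1}(Z_{\pi_k(t)}-Z_{\pi_{k-1}(t)})$ has finitely many nonzero terms; the level-$k$ link obeys $\delta(\pi_k(t),\pi_{k-1}(t))\le3\epsilon_k$ and, as $t$ ranges over $T$, takes at most $N_k^2$ values, so (\ref{increment}) plus a union bound give
\[
\PP\Big(\max_{t\in T}\big|Z_{\pi_k(t)}-Z_{\pi_{k-1}(t)}\big|>u_k\Big)\ \le\ A\,N_k^2\exp\!\Big(-\tfrac{u_k^2}{9B\epsilon_k^2}\Big),\qquad u_k>0 .
\]
For the second display, bound the $L^q$-norm of $\max_{t}|Z_{\pi_k(t)}-Z_{\pi_{k-1}(t)}|$---a maximum of at most $N_k^2$ sub-Gaussian increments of parameter $\lesssim\sqrt{B}\,\epsilon_k$---by $\lesssim_q\sqrt{B}\,\epsilon_k(\sqrt{\log N_k}+1)$, sum over $k$ with the triangle inequality in $L^q$, and compare $\sum_k\epsilon_k\sqrt{\log N_k}$ with the entropy integral via the monotonicity of $\epsilon\mapsto\log N(T,\delta,\epsilon)$ (so that $\epsilon_{k+1}\sqrt{\log N_k}\le\int_{\epsilon_{k+1}}^{\epsilon_k}\sqrt{\log N(T,\delta,\epsilon)}\,d\epsilon$); together with $\diam(T)\lesssim\EE(T,\delta)$ this gives $\|\sup_t|Z_t-Z_{t_0}|\|_q\lesssim_q\sqrt{B}\,\EE(T,\delta)$, whence the claim via $\sup_{s,t}|Z_s-Z_t|\le2\sup_t|Z_t-Z_{t_0}|$. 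For the first display, take instead $u_k=c\sqrt{B}\,\epsilon_k(\sqrt{\log N_k}+\sqrt{k})+\epsilon_k\tau$ with $c$ a large absolute constant and $\tau>0$ free; then $\sum_k u_k\le C\sqrt{B}\,\EE(T,\delta)+C\tau\diam(T)$, while summing the right-hand side above over $k$ leaves probability $\le C\exp(-\tau^2/(CB))$; choosing $\tau\sim\eta/\diam(T)$ when $\eta\gtrsim\sqrt{B}\,\EE(T,\delta)$ and using $\diam(T)\lesssim\EE(T,\delta)$ produces $\PP(\sup_{s,t}|Z_s-Z_t|>\eta)\le C\exp(-\eta^2/(CB\,\EE(T,\delta)^2))$, the complementary range of $\eta$ being trivial after enlarging $C$.

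I expect the two genuinely delicate points, as opposed to routine bookkeeping, to be the following. First, the passage from an arbitrary---possibly non-separable---index set to its finite subsets: hypothesis (\ref{uncountable}) is precisely the device that transfers the finite-set tail and moment bounds to the limit, and one must also verify that the covering numbers and the entropy integral of a finite subset are dominated by those of $T$. Second, faithfully matching the discrete sum $\sum_k\epsilon_k\sqrt{\log N_k}$ to the entropy integral $\EE(T,\delta)$---with the correct direction of monotonicity---and carrying along the normalization $\diam(T)\lesssim\EE(T,\delta)$, so that the Gaussian scale in the first display can be stated as $\EE(T,\delta)$ rather than merely $\diam(T)$. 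The choice of chaining radii, the union bounds over links, and the summation of sub-Gaussian tails are all standard.
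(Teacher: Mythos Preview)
The paper does not give its own proof of this lemma; it simply states that the proof ``could be found in \cite[Chapter 11]{LT91} or \cite[Chapter 2]{T21}.'' Your chaining argument is exactly the standard Dudley proof that those references contain: reduce to finite index sets via the separability hypothesis, build a dyadic chain through successive $\epsilon_k$-nets, control each link by a union bound over at most $N_k^2$ sub-Gaussian increments, and compare the resulting sum $\sum_k\epsilon_k\sqrt{\log N_k}$ to the entropy integral. Your handling of the two points you flag as delicate---the passage from $T$ to finite $F$ with $\EE(F,\delta)\le 2\,\EE(T,\delta)$, and the use of $\diam(T)\lesssim\EE(T,\delta)$ to state the Gaussian scale in terms of $\EE(T,\delta)$---is correct and is precisely how the cited sources proceed. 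So your proposal is correct and matches the approach the paper defers to.
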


Let $\|f_{t, r}^{(\alpha)} - 1 \|_\infty = \sup_{y \in M}|f_{t, r}^{(\alpha)}(y) - 1|$. Then the uniform fluctuation bound is achieved.

\begin{propo}[Uniform fluctuation bound] \label{uniform bound'}
There exists $c > 0$ such that for any $t\geq t_M$, $r \in (0, 1]$ and $\eta > 0$,
\beq \label{prob tail}
\sup_{\nu \in \Pr(M)} \PP^\nu \big( \|f_{t, r}^{(\alpha)} - 1 \|_\infty > \eta  \big) \, \leq \, c \exp \left( - \frac{ r^d \eta^2}{c R_\alpha(t) \log(r^{-1} + 1)}   \right),
\eeq
and for any $q \geq 1$, there exists $c_q > 0$ such that
\end{propo}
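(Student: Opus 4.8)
The plan is to regard $\big(f_{t,r}^{(\alpha)}(y)\big)_{y\in M}$ as a sub-Gaussian process indexed by $M$ and to invoke Dudley's entropy bound, Lemma~\ref{generic chaining}. The increment estimate \eqref{Lip} already shows that, with respect to the truncated distance $\delta:=\rho_{r^{1/2}}$ defined in \eqref{rhou}, the process satisfies the sub-Gaussian condition \eqref{increment} with $A=c$ and $B=cR_\alpha(t)r^{-(d+1)}$, and with the \emph{same} constants for every $\nu\in\Pr(M)$. Since $r>0$ the heat kernel $p_r$ is smooth, so for each realization of $(X_s)_{0\le s\le t}$ the map $y\mapsto f_{t,r}^{(\alpha)}(y)=\tfrac{\alpha}{t^\alpha}\int_0^t p_r(X_s,y)\,s^{\alpha-1}\d s$ is continuous; together with the separability of $M$ this yields the compatibility condition \eqref{uncountable}, so Lemma~\ref{generic chaining} applies to the uncountable index set $M$.

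First I would replace $\|f_{t,r}^{(\alpha)}-1\|_\infty$ by the oscillation $\mathrm{osc}:=\sup_{y,z\in M}|f_{t,r}^{(\alpha)}(y)-f_{t,r}^{(\alpha)}(z)|=\sup_M f_{t,r}^{(\alpha)}-\inf_M f_{t,r}^{(\alpha)}$. Because $f_{t,r}^{(\alpha)}$ is a probability density with respect to $\mu$, the integral constraint forces $\inf_M f_{t,r}^{(\alpha)}\le 1\le\sup_M f_{t,r}^{(\alpha)}$, whence for every $y\in M$ one has $|f_{t,r}^{(\alpha)}(y)-1|\le\mathrm{osc}$; that is, $\|f_{t,r}^{(\alpha)}-1\|_\infty\le\mathrm{osc}$ pathwise.

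Next I would estimate the entropy integral $\EE(M,\rho_{r^{1/2}})=\int_0^\infty\sqrt{\log N(M,\rho_{r^{1/2}},\epsilon)}\,\d\epsilon$. Since $\rho_{r^{1/2}}\le r^{1/2}$ identically, a single $\rho_{r^{1/2}}$-ball of radius $r^{1/2}$ covers $M$, so $N(M,\rho_{r^{1/2}},\epsilon)=1$ for $\epsilon\ge r^{1/2}$; for $\epsilon<r^{1/2}$ the $\rho_{r^{1/2}}$- and $\rho$-balls of radius $\epsilon$ coincide, and since $M$ is a compact $d$-dimensional manifold, $N(M,\rho,\epsilon)\le c\,\epsilon^{-d}$. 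Hence $\EE(M,\rho_{r^{1/2}})\le\int_0^{r^{1/2}}\sqrt{d\log(\epsilon^{-1})+\log c}\,\d\epsilon$, and the elementary bound $\int_0^a\sqrt{\log(\epsilon^{-1})}\,\d\epsilon\le c\,a\sqrt{\log(a^{-1}+1)}$ for $a\in(0,1]$ gives $\EE(M,\rho_{r^{1/2}})\le c\,r^{1/2}\sqrt{\log(r^{-1}+1)}$, so that $B\,\EE(M,\rho_{r^{1/2}})^2\le c\,R_\alpha(t)\,r^{-d}\log(r^{-1}+1)$.

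Finally, inserting $A=c$, $B=cR_\alpha(t)r^{-(d+1)}$ and the last bound into Lemma~\ref{generic chaining} gives, uniformly in $\nu\in\Pr(M)$,
$$
\PP^\nu\big(\mathrm{osc}>\eta\big)\le c\exp\!\Big(-\tfrac{r^d\eta^2}{cR_\alpha(t)\log(r^{-1}+1)}\Big),\qquad \E^\nu\big[\mathrm{osc}^q\big]^{1/q}\le c_q\sqrt{B}\,\EE(M,\rho_{r^{1/2}})\le c_q\,R_\alpha(t)^{1/2}r^{-d/2}\sqrt{\log(r^{-1}+1)},
$$
and combining this with $\|f_{t,r}^{(\alpha)}-1\|_\infty\le\mathrm{osc}$ yields \eqref{prob tail} together with the asserted $L^q$ moment bound. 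I expect the only genuinely delicate step to be the entropy-integral estimate — recovering the exact power $r^{-d}$ and the single logarithm $\log(r^{-1}+1)$ — along with the sample-path continuity needed to justify applying Dudley's bound over the uncountable index set; everything else is bookkeeping, with the constant $c$ absorbing the usual changes from line to line.
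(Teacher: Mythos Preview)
Your proposal is correct and follows essentially the same route as the paper: view $(f_{t,r}^{(\alpha)}(y))_{y\in M}$ as a sub-Gaussian process with respect to the truncated distance $\rho_{r^{1/2}}$ via \eqref{Lip}, bound the Dudley entropy integral by $c\,r^{1/2}\sqrt{\log(r^{-1}+1)}$ using the standard $d$-dimensional covering estimate, and then apply Lemma~\ref{generic chaining}. The only cosmetic difference is in reducing $\|f_{t,r}^{(\alpha)}-1\|_\infty$ to the oscillation: the paper invokes continuity and connectedness of $M$ to find a point where $f_{t,r}^{(\alpha)}=1$, whereas you argue directly from $\inf f\le 1\le\sup f$; both give the same inequality \eqref{pointwise}.
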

\beq \label{lq bound}
\sup_{\nu \in \Pr(M)} \E^\nu \big[ \|f_{t, r}^{(\alpha)} - 1 \|_\infty^q \big]^{\frac{1}{q}} \, \leq \, c_q R_\alpha(t)^{\frac12} r^{-\frac{d}{2}} \log(r^{-1} + 1)^{\frac12}.
\eeq

\begin{proof}
For any sample path $(X_u)_{u \in [0, t]}$, $f_{t, r}^{(\alpha)}$ is Lipschitz (therefore continuous) and satisfies $\mu(f_{t, r}^{(\alpha)}) = 1$. Since $M$ is connected, by the mean value theorem we can always find $w \in M$ such that $f_{t, r}^{(\alpha)}(w) = 1$. Consequently,
\beq \label{pointwise}
\|f_{t, r}^{(\alpha)} - 1 \|_\infty \, \le \, \sup_{y, z \in M} |f_{t, r}^{(\alpha)}(y) - f_{t, r}^{(\alpha)}(z)| \, \le \, 2\|f_{t, r}^{(\alpha)} - 1 \|_\infty.
\eeq
Notice that $M$ is separable, the condition \eqref{uncountable} is clearly valid for $(f_{t, r}^{(\alpha)}(y))_{y \in M}$. Concerning \eqref{Lip}, this proposition is an adaptation of Lemma \ref{generic chaining} after we use the fact that for each $u > 0$,
\beq \label{gammau}
\EE(M, \rho_u) \, \le \, c u \sqrt{\log(u^{-1} + 1)}.
\eeq
Indeed, in the setting of the compact Riemannian manifold $(M, \rho)$, by a standard volume argument, it is known that
\beqs
N(M, \rho, \epsilon) \, \le \, \max \{ c \epsilon^{-d}, 1\},
\eeqs
where $d$ is the dimension of $M$. Using the definition of $\rho_u$,
\beqs
N(M, \rho_u, \epsilon ) \, = \,
\begin{cases}
N(M, \rho, \epsilon), & \textrm{if } \epsilon \in (0, u);\\
1, & \textrm{if } \epsilon \in [u, \infty).
\end{cases}
\eeqs
Then for $0 < u < \diam M$,
\beqs
\EE(M, \rho_u) \, = \, \int_{0}^{u} \sqrt{\log N(M, \rho, \epsilon) } \, \d \epsilon \, \le \, c u \sqrt{\log(u^{-1} + 1)},
\eeqs
and for $u \ge \diam M$, $\EE(M, \rho_u)  =  \EE (M, \rho)$, which proves \eqref{gammau}.

\end{proof}
\begin{remark}
An alternative approach to the bound \eqref{lq bound} is based on the hypercontractivity property of $(P_t)_{t > 0}$. Indeed, by \eqref{general sobolev}, for $p \ge 1$, there exists $c > 0$ such that for any $r \in (0, 1]$,
\beqs
\|f_{t, r}^{(\alpha)} - 1 \|_\infty \, = \, \|P_{\frac{r}{2}} (f_{t, \frac{r}{2}}^{(\alpha)} - 1 ) \|_\infty \, \leq \, c  ( r/2)^{-\frac{d}{2p}} \|f_{t, \frac{r}{2}}^{(\alpha)} - 1 \|_p.
\eeqs
Then given $q \ge 1$, for $q \le p < \infty$, by H\"older's inequality,
\beqs
\E^\nu \big[ \|f_{t, r}^{(\alpha)} - 1 \|_\infty^q \big]^{\frac{1}{q}} \, \le \, c  ( r/2)^{-\frac{d}{2p}} \E^\nu \big[ \|f_{t, \frac{r}{2}}^{(\alpha)} - 1 \|_p^p \big]^{\frac{1}{p}}.
\eeqs
Using Fubini's theorem,
\beqs
\E^\nu \big[ \|f_{t, \frac{r}{2}}^{(\alpha)} - 1 \|_p^p \big] \, = \, \int_{M} \E^\nu \big[ |f_{t, \frac{r}{2}}^{(\alpha)}(y) - 1|^p \big] \d \mu(y) \, = \, \int_{M} \E^\nu \big[ |\mu_t^{(\alpha)}(p_{\frac{r}{2}}(\cdot, y) - 1)|^p \big] \d \mu(y).
\eeqs
Combined with the heat kernel bound \eqref{upper pt} and Lemma \ref{fluctuation1}, for any $t\geq t_M$ and $r \in (0, 1]$,
\beqs
\sup_{\nu \in \Pr(M)} \E^\nu \big[ \|f_{t, r}^{(\alpha)} - 1 \|_\infty^q \big]^{\frac{1}{q}} \, \leq \, c R_\alpha(t)^{\frac12} r^{-\frac{d}{2}} \cdot \inf_{p \ge q}\big\{ \sqrt{p} \cdot ( r/2)^{-\frac{d}{2p}}  \big\} .
\eeqs
Optimizing in $p \ge q$ depending on $r \in (0, 1]$ yields that
\beqs
\inf_{p \ge q}\big\{ \sqrt{p} \cdot ( r/2)^{-\frac{d}{2p}}  \big\} \, \le \, c \big(q \vee \log(r^{-1} + 1) \big)^{\frac12},
\eeqs
which establishes \eqref{lq bound}.
\end{remark}

We conclude this section with another application of Lemma \ref{fluctuation1}, which will be used in the next section.
\begin{cor} \label{general p}
For any $q \ge 1$, there exists $c_q > 0$ such that for any $t\geq t_M$ and $r \in (0, 1]$,
\beqs
\sup_{\nu \in \Pr(M)} \E^\nu \big[ \| f_{t, r}^{(\alpha)} - 1 \|_2^{2q} \big]^{\frac 1q}  \, \leq \, c_q   R_\alpha(t) r^{-d}.
\eeqs
\end{cor}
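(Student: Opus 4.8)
The plan is to bound the $L^{2q}(\PP^\nu)$-norm of $\|f_{t,r}^{(\alpha)}-1\|_2^2$ by integrating over $M$ a pointwise-in-$y$ moment estimate for the density, the latter being a direct instance of the Bernstein-type bound in Lemma \ref{fluctuation1}.

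First I would write $\|f_{t,r}^{(\alpha)}-1\|_2^2 = \int_M |f_{t,r}^{(\alpha)}(y)-1|^2\,\d\mu(y)$ and invoke the generalized Minkowski inequality, which is available since $q\ge 1$ makes $x\mapsto x^q$ convex, to exchange the $L^q$-norm in $\omega$ with the $L^2(\mu)$-integral in $y$:
\beqs
\E^\nu\big[\|f_{t,r}^{(\alpha)}-1\|_2^{2q}\big]^{\frac1q} \,\le\, \int_M \E^\nu\big[|f_{t,r}^{(\alpha)}(y)-1|^{2q}\big]^{\frac1q}\,\d\mu(y).
\eeqs
Next, recalling from \eqref{def0} that $f_{t,r}^{(\alpha)}(y)-1 = \mu_t^{(\alpha)}(p_r(\cdot,y)-1)$, and noting that by symmetry and conservativeness of $(P_t)$ one has $\mu(p_r(\cdot,y)) = P_r 1(y) = 1$, so that $\mu(p_r(\cdot,y)-1)=0$, the function $p_r(\cdot,y)-1$ is an admissible test function in Lemma \ref{fluctuation1}. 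Applying that lemma with $p = 2q$ gives, uniformly in $\nu\in\Pr(M)$ and $y\in M$,
\beqs
\E^\nu\big[|f_{t,r}^{(\alpha)}(y)-1|^{2q}\big]^{\frac{1}{2q}} \,\le\, c\sqrt{2q}\,R_\alpha(t)^{\frac12}\,\|p_r(\cdot,y)-1\|_\infty.
\eeqs

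Finally, for $r\in(0,1]$ the heat kernel bound \eqref{upper pt} yields $\|p_r(\cdot,y)\|_\infty\le c r^{-d/2}$, hence $\|p_r(\cdot,y)-1\|_\infty\le c r^{-d/2}$ (using $r^{-d/2}\ge 1$), uniformly in $y$. Substituting this into the last display, raising to the power $2$, inserting into the Minkowski inequality, and using $\mu(M)=1$ produces the claimed bound $c_q R_\alpha(t)\, r^{-d}$. I do not expect a genuine obstacle here: the argument is a two-line composition of Lemma \ref{fluctuation1} with a Minkowski interchange, and the only point deserving a word of justification is the exchange of the two norms, which is exactly where $q\ge 1$ is used. (One could instead route through hypercontractivity as in the remark following Proposition \ref{uniform bound'}, but for the $L^2$-norm no such trick is needed and it would only introduce a spurious $\log(r^{-1}+1)$ factor.)
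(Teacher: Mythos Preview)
Your proposal is correct and follows essentially the same route as the paper: Minkowski's integral inequality to swap the $L^q(\PP^\nu)$-norm with the $\mu$-integral, then Lemma \ref{fluctuation1} applied to $p_r(\cdot,y)-1$ together with the heat kernel bound \eqref{upper pt}. You even spell out the mean-zero condition $\mu(p_r(\cdot,y)-1)=0$ that the paper leaves implicit.
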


\begin{proof}
Notice that $f_{t, r}^{(\alpha)}(y) - 1 = \mu_t^{(\alpha)}(p_r(\cdot, y) - 1)$. By Minkowski's integral inequality, for any $\nu \in \Pr(M)$,
\begin{align*}
\E^\nu \big[ \| f_{t, r}^{(\alpha)} - 1 \|_2^{2q} \big]^{\frac 1q} & \, = \, \E^\nu \left[ \bigg( \int_M |\mu_t^{(\alpha)}(p_r(\cdot, y) - 1)|^2 \d \mu(y)   \bigg)^q \right]^{\frac 1q}\\
& \, \leq \, \int_M \E^\nu \big[ |\mu_t^{(\alpha)}(p_r(\cdot, y) - 1)|^{2q} \big]^{\frac 1q} \d \mu(y).
\end{align*}
The proof is complete after applying Lemma \ref{fluctuation1} and \eqref{upper pt}.
\end{proof}

\section{Proofs of Theorems \ref{main} and \ref{main1'}} \label{s4}
\setcounter{equation}{0}

\subsection{More about Sobolev norm estimates}

In this sub-section, we shall develop upper bound estimates of $H^{-1, 2p}$-Sobolev norm of $f_{t, r}^{(\alpha)} - 1$, which can be viewed as a generalization of of Corollary \ref{final}. To begin with, we can establish the following lemma, which is an interpolation between Lemma \ref{square bound} and Corollary \ref{general p}.

\begin{lem} \label{J0}
Assume that $M$ is compact with $d \le 3$. Given $\epsilon > 0$. Then for any $1 \le p < \infty$, there exists $c > 0$ such that for any $1 \le p < \infty$, $t\geq t_M$ and $s > 0$,
\beqs
\sup_{\nu \in \Pr(M)}  R_\alpha(t)^{-1} \E^\nu \big[ \| f_{t, s}^{(\alpha)} - 1 \|_2^{2p} \big]^{\frac 1p}  \, \leq \, c \e^{-2\lambda_1 s} \cdot
\begin{cases}
(1 \wedge s)^{-d + \frac{d + 4\alpha}{2p} - \epsilon}, & \textrm{if } \alpha \in (0, \frac{d \wedge 2}{4});\\
(1 \wedge s)^{-d + \frac{d + d \wedge 2}{2p} - \epsilon}, & \textrm{if } \alpha \in [\frac{d \wedge 2}{4}, \infty).
\end{cases}
\eeqs
\end{lem}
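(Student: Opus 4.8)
The plan is to interpolate between the two extreme cases already available: the $L^2$-norm bound of Lemma~\ref{square bound}, which controls $\E^\nu\big[\|f_{t,s}^{(\alpha)}-1\|_2^2\big]$ with a rate carrying one power of $R_\alpha(t)$ and the sharp $s$-dependence, and Corollary~\ref{general p}, which controls $\E^\nu\big[\|f_{t,s}^{(\alpha)}-1\|_2^{2q}\big]^{1/q}$ for every $q\ge1$ at the (crude) cost $R_\alpha(t)\, s^{-d}$. More precisely, I would fix $p\in[1,\infty)$ and, for a large auxiliary integer $q\gg p$, write $2p$ as an interpolation exponent between $2$ and $2q$: with $\theta\in(0,1)$ determined by $\frac{1}{2p}=\frac{1-\theta}{2}+\frac{\theta}{2q}$, Hölder's inequality on $(\Omega,\mathcal F,\PP^\nu)$ (applied to the random variable $\|f_{t,s}^{(\alpha)}-1\|_2^2$) yields
\beqs
\E^\nu\big[\|f_{t,s}^{(\alpha)}-1\|_2^{2p}\big]^{\frac1p}
\,\le\,
\E^\nu\big[\|f_{t,s}^{(\alpha)}-1\|_2^{2}\big]^{\frac{1-\theta}{p}\cdot p/?}\cdots
\eeqs
— more cleanly, $\big(\E[Y^{p}]\big)^{1/p}\le \big(\E[Y]\big)^{1-\vartheta}\big(\E[Y^{q}]\big)^{\vartheta/q}$ for a suitable $\vartheta$, with $Y=\|f_{t,s}^{(\alpha)}-1\|_2^2\ge0$. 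Taking $q\to\infty$ makes $\vartheta\to 1-1/p$, so the two factors contribute $R_\alpha(t)^{1/p}\cdot R_\alpha(t)^{1-1/p}=R_\alpha(t)$ in total, which is exactly the prefactor demanded, while the $s$-exponents combine as $\frac1p\cdot(\text{Lemma~\ref{square bound} exponent})+(1-\frac1p)\cdot(-d)$, up to the arbitrarily small loss $\epsilon$ coming from the fact that we must stop at some finite $q$ rather than letting $q=\infty$.

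The bookkeeping of the $s$-exponents is where the case split in the statement comes from, so I would organise it dimension by dimension. For $d\le3$ and $s\in(0,1]$ (the regime $s\ge1$ being trivial since everything is bounded and $\e^{-2\lambda_1 s}$ absorbs it), Lemma~\ref{square bound} gives $\E^\nu\big[\|f_{t,s}^{(\alpha)}-1\|_2^2\big]\le c\,R_\alpha(t)\,s^{-\gamma(\alpha,d)}$ where $\gamma$ is, e.g. for $d=1$: $\tfrac12-2\alpha$ when $\alpha<\tfrac14$, $0^+$ (a log) at $\alpha=\tfrac14$, and $0$ beyond; similarly for $d=2,3$ reading off the lemma. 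Then the interpolated exponent is
\beqs
-d+\frac{d-\gamma(\alpha,d)+?}{2p},
\eeqs
and one checks that $\gamma(\alpha,d)$ matches $d-4\alpha$ in the small-$\alpha$ range and $d-(d\wedge2)$ in the large-$\alpha$ range, which reproduces $-d+\frac{d+4\alpha}{2p}$ and $-d+\frac{d+d\wedge2}{2p}$ respectively; the logarithmic boundary cases ($\alpha=\tfrac14$ for $d\le2$, etc.) are exactly what force the extra $-\epsilon$ and the use of a finite-but-large $q$ rather than the endpoint. The factor $\e^{-2\lambda_1 s}$ is produced by noting that $\|f_{t,s}^{(\alpha)}-1\|_2^2=\sum_i\e^{-2\lambda_i s}|\mu_t^{(\alpha)}(\phi_i)|^2$ already carries $\e^{-2\lambda_1 s}$ out front (equivalently, one can run the argument with $s$ replaced by $s/2$ in the semigroup and keep the leftover exponential), so it rides along through Hölder's inequality unchanged.

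The main obstacle, I expect, is not the interpolation itself but getting the $s$-exponent \emph{accounting} exactly right at and near the threshold values of $\alpha$ — namely verifying that the finite-$q$ interpolation, which necessarily loses an $\epsilon$ whenever the $L^2$-bound has a logarithmic (rather than purely polynomial) behaviour in $s$, produces precisely the exponents $-d+\frac{d+4\alpha}{2p}-\epsilon$ and $-d+\frac{d+d\wedge2}{2p}-\epsilon$ claimed, uniformly over $p\in[1,\infty)$; in particular one must check that the constant $c$ can be taken independent of $p$ (it can, because the $q$-dependence in Corollary~\ref{general p} only enters through $c_q$ and we fix $q$ once $\epsilon$ is fixed, while the Hölder step itself is constant-free). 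A secondary technical point is the uniformity in $\nu\in\Pr(M)$: both input estimates are already stated with $\sup_{\nu}$, and Hölder's inequality is applied pathwise in $\nu$, so the supremum passes through without difficulty. Once these exponents are confirmed, assembling the two factors and absorbing the $1\wedge s$ truncation finishes the proof.
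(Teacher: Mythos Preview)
Your approach is correct and essentially identical to the paper's: H\"older interpolation between Lemma~\ref{square bound} (the $p=1$ endpoint) and Corollary~\ref{general p} (the large-$q$ endpoint), with the finite-$q$ correction producing the $\epsilon$-loss that absorbs the logarithms; the paper parametrises the same inequality as $N_p \le N_1^{\theta/p} N_{(p-\theta)/(1-\theta)}^{1-\theta/p}$ with $\theta\uparrow 1$ and then reads off the exponents exactly as you describe. One minor slip: the constant $c$ does depend on $p$, since the auxiliary exponent $q$ (your notation) or $(p-\theta)/(1-\theta)$ (the paper's) must exceed $p$ for the interpolation to apply, and the constant $c_q$ in Corollary~\ref{general p} grows with $q$ --- the double occurrence of ``for any $1\le p<\infty$'' in the statement is a typo, not a claim of uniformity in $p$.
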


\begin{proof}
By Poincar\'e inequality \eqref{Poin}, for $s \ge 1$,
\beqs
\E^\nu \big[ \| f_{t, s}^{(\alpha)} - 1 \|_2^{2p} \big]^{\frac 1p} \, \le \, \e^{-2\lambda_1 (s-1)} \E^\nu \big[ \| f_{t, 1}^{(\alpha)} - 1 \|_2^{2p} \big]^{\frac 1p}.
\eeqs
It thus suffices to prove the case $0 < s < 1$. For simplicity, we set $N_q  =  \E^\nu \big[ \| f_{t, s}^{(\alpha)} - 1 \|_2^{2q} \big]^{\frac 1q}$ where $1 \le q < \infty$. By H\"older's inequality, for any $\theta \in [0, 1)$,
\beqs
N_p \, \le \, N_1^{\frac{\theta}{p}} \cdot N_{\frac{p-\theta}{1 - \theta}}^{1 - \frac{\theta}{p}} \, = \, \big( N_1^{\frac{1}{p}} \cdot N_{\frac{p-\theta}{1 - \theta}}^{1 - \frac{1}{p}} \big) \cdot \big( N_1^{-1} N_{\frac{p-\theta}{1 - \theta}}  \big)^{\frac{1-\theta}{p}}.
\eeqs
Notice that the upper bound on $N_1$ (respectively $N_{\frac{p-\theta}{1 - \theta}}$) has been established in Lemma \ref{square bound} (respectively Corollary \ref{general p}). This lemma follows after choosing a $\theta$ sufficiently close to 1. Note that in some cases we also use the fact that for any $\epsilon > 0$, there exists $c_{\epsilon} > 0$ such that
\beqs
\log (s^{-1} + 1) \, \le \, c_\epsilon s^{-\epsilon}, \quad s \in (0, 1].
\eeqs
\end{proof}

\begin{lem}[$H^{-1, 2p}$-Sobolev norm estimate] \label{J1}
Assume that $M$ is compact with $d \le 3$. If $d \le 2$ and $\alpha \in (0, \infty)$, or $d = 3$ and $\alpha \in (\frac14, \infty)$, there exists a critical exponent $p_0 = p_0(\alpha, d) \in (1, \infty)$ such that for any $1 \leq p < p_0$,
\begin{align} \label{norm}
\sup_{\substack{\nu \in \Pr(M) \\ t \ge t_M, \, r > 0}}  R_\alpha(t)^{-1} \E^\nu\left[ \int_M |\nabla (-\L)^{-1} (f_{t, r}^{(\alpha)} - 1)|^{2p} \d\mu \right]^{\frac1p}  \, < \, \infty.
\end{align}
\end{lem}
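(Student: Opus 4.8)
The plan is to exploit that $(-\L)^{-1}$ carries two degrees of smoothing of which $\nabla$ removes only one, and to reduce everything to the $L^2$-bounds of Lemma~\ref{J0}. Fix $p \ge 1$ (to be restricted below) and write $g = f_{t,r}^{(\alpha)} - 1$, so that $\mu(g) = 0$ and, along each sample path, $g$ is bounded. Since $(-\L)^{-1} = \int_0^\infty P_s\,\d s$, differentiating under the integral (legitimate because $g$ is bounded and mean-zero) gives $\nabla(-\L)^{-1}g = \int_0^\infty \nabla P_s g\,\d s$. Viewing $\nabla(-\L)^{-1}g$ as an element of $L^{2p}(\PP^\nu \otimes \mu)$, jointly in the sample path and the space variable, Minkowski's integral inequality yields
\beqs
\E^\nu \big[ \| \nabla(-\L)^{-1} g \|_{2p}^{2p} \big]^{\frac{1}{2p}} \, \le \, \int_0^\infty \E^\nu \big[ \| \nabla P_s g \|_{2p}^{2p} \big]^{\frac{1}{2p}} \, \d s .
\eeqs
It thus suffices to bound the integrand, for each $s>0$, by a quantity that is integrable in $s$ and, after division by $R_\alpha(t)^{1/2}$, uniformly bounded in $r>0$, $t \ge t_M$ and $\nu \in \Pr(M)$.

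For the pathwise estimate of $\|\nabla P_s g\|_{2p}$ I would first use the semigroup identity $P_s f_{t,r}^{(\alpha)} = f_{t,r+s}^{(\alpha)}$ (immediate from $f_{t,r}^{(\alpha)}(y) = \mu_t^{(\alpha)}(p_r(\cdot,y))$ and Chapman--Kolmogorov), so that $\nabla P_s g = \nabla P_{s/2}\big(f_{t,r+s/2}^{(\alpha)} - 1\big)$. Next, interpolating (Riesz--Thorin, with Hilbert-bundle valued target) the heat kernel gradient bound $\|\nabla P_v\|_{1 \to \infty} \le c(1 \wedge v)^{-(d+1)/2}$ — which follows from the pointwise bound on $\nabla_x p_v$ recalled before Proposition~\ref{energy'} — with the spectral estimate $\|\nabla P_v\|_{2 \to 2} \le c(1 \wedge v)^{-1/2}$ (valid since $\lambda \e^{-2\lambda v} \le (2\e v)^{-1}$ for all $\lambda>0$), one obtains
\beqs
\| \nabla P_v \|_{q \to 2p} \, \le \, c\, (1 \wedge v)^{-\beta}, \qquad q = \frac{2p}{2p-1} \in (1,2], \quad \beta = \beta(p,d) = \frac{d+1}{2}\Big( 1 - \frac1p \Big) + \frac1{2p} .
\eeqs
Because $q \le 2$ and $\mu$ is a probability measure, $\|f_{t,r+s/2}^{(\alpha)}-1\|_q \le \|f_{t,r+s/2}^{(\alpha)}-1\|_2$, and hence pathwise
\beqs
\| \nabla P_s g \|_{2p} \, \le \, c\, (1 \wedge s)^{-\beta}\, \| f_{t,r+s/2}^{(\alpha)} - 1 \|_2 .
\eeqs

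Taking $L^{2p}(\PP^\nu)$ norms and invoking Lemma~\ref{J0} with parameter $r+s/2$ — writing $\gamma = \gamma(p,d,\alpha,\epsilon)$ for the blow-up exponent there, so $\gamma = d - \frac{d+4\alpha}{2p} + \epsilon$ when $\alpha \in (0, \frac{d\wedge2}{4})$ and $\gamma = d - \frac{d+d\wedge2}{2p} + \epsilon$ when $\alpha \ge \frac{d\wedge2}{4}$ — combines with the first display, using $(1\wedge(r+s/2))^{-\gamma/2} \le c(1\wedge s)^{-(\gamma\vee0)/2}$ (uniformly in $r>0$) and $\e^{-\lambda_1(r+s/2)} \le \e^{-\lambda_1 s/2}$, to give
\beqs
R_\alpha(t)^{-\frac12}\, \E^\nu \big[ \| \nabla(-\L)^{-1} g \|_{2p}^{2p} \big]^{\frac{1}{2p}} \, \le \, c \int_0^\infty (1 \wedge s)^{-\beta - \frac{\gamma \vee 0}{2}}\, \e^{-\lambda_1 s/2}\, \d s .
\eeqs
The exponential factor makes the tail $s\to\infty$ harmless, while near $s=0$ the integral converges exactly when $\beta + (\gamma\vee0)/2 < 1$; squaring and recalling $\|\nabla(-\L)^{-1}g\|_{2p}^{2p} = \int_M |\nabla(-\L)^{-1}(f_{t,r}^{(\alpha)}-1)|^{2p}\,\d\mu$ then yields \eqref{norm}, with $p_0 = p_0(\alpha,d)$ defined as the supremum of the $p \ge 1$ for which $\beta(p) + (\gamma(p)\vee0)/2 < 1$. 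The crux — the step I would carry out with most care — is to verify $p_0 > 1$, i.e.\ the strict inequality at $p=1$: there $\beta(1) = \frac12$; in the first case of Lemma~\ref{J0} one has $\gamma(1) = \frac{d-4\alpha}{2} + \epsilon$ (positive in the relevant range), so the condition reads $\alpha > \frac{d-2}{4} + \frac\epsilon2$, which holds for small $\epsilon$ because $\alpha > \frac{(d-2)^+}{4}$ by hypothesis; in the second case $\gamma(1) = d - \frac{d+d\wedge2}{2} + \epsilon \le \frac12 + \epsilon$ (here $d \le 3$ is used), so $\beta(1) + \gamma(1)/2 \le \frac34 + \frac\epsilon2 < 1$ automatically. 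The same slack also absorbs the auxiliary $\epsilon$ coming from $\log(s^{-1}+1) \le c_\epsilon s^{-\epsilon}$ in Lemma~\ref{J0}. (Alternatively, one may write $\nabla(-\L)^{-1} = [\nabla(-\L)^{-1/2}]\circ(-\L)^{-1/2}$, invoke $L^{2p}(\mu)$-boundedness of the Riesz transform on the compact manifold $M$, and run the same estimate on $\|(-\L)^{-1/2}g\|_{2p} \le c\int_0^\infty s^{-1/2}\|P_s g\|_{2p}\,\d s$ via hypercontractivity \eqref{general sobolev} and Lemma~\ref{J0}; the bookkeeping is the same and produces the same critical exponent.)
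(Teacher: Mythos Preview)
Your argument is correct and proves the lemma as stated. The route differs from the paper's in one key place: to pass from $L^2$ to $L^{2p}$ you interpolate the operator $\nabla P_v$ between the endpoints $\|\nabla P_v\|_{1\to\infty}\le c(1\wedge v)^{-(d+1)/2}$ and $\|\nabla P_v\|_{2\to 2}\le c(1\wedge v)^{-1/2}$, obtaining $\|\nabla P_v\|_{q\to 2p}$ with $q=\tfrac{2p}{2p-1}$, and then discard the $q$-gain via $\|\cdot\|_q\le\|\cdot\|_2$. The paper instead uses the pointwise Bakry--\'Emery bound $|\nabla P_s f|^2\le c(1\wedge s)^{-1}P_s(f^2)$, followed by hypercontractivity $\|P_{s/2}\|_{1\to p}$; this gives directly $\|\nabla P_v\|_{2\to 2p}\le c(1\wedge v)^{-1/2-d(p-1)/(4p)}$, a strictly smaller exponent than your $\beta(p)$ for $p>1$ (the difference is $d(p-1)/(4p)$). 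Consequently the paper's critical exponent $p_0=\frac{2d+4\alpha}{3d-2}$ (resp.\ $\frac{2d+d\wedge 2}{3d-2}$) in \eqref{defofp0} is larger than the one your convergence condition $\beta+(\gamma\vee 0)/2<1$ produces --- e.g.\ for $d=3$, $\alpha\ge\tfrac12$, the paper gets $p_0=\tfrac87$ while your bound gives $p_0\approx\tfrac{11}{10}$.

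For the lemma itself this is harmless: you only need $p_0>1$, and you verify that carefully. Where it matters is downstream, in Sections~\ref{up}--\ref{low}: the explicit rates $K(t)$ and $\widetilde K(t)$ in Theorem~\ref{main1'} depend on $p_0$ through the H\"older conjugate constraint $q>\tfrac{p_0}{p_0-1}$, so a smaller $p_0$ would worsen those quantitative bounds. If you want to match the paper's rates, replace your Riesz--Thorin step by the pointwise local Poincar\'e inequality (which also sidesteps the vector-bundle interpolation technicality you flag).
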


\begin{proof}
By the local Poincar\'e inequality, for any $s > 0$ and $f \in C_b(M)$,
\begin{align*}
|\nabla P_s f| &\, \leq \, \sqrt{\frac{K}{\e^{2Ks} - 1}} \, P_s(f^2)^{\frac12} \, \leq \, c  (1 \wedge s)^{-\frac12} P_s(f^2)^{\frac12}.
\end{align*}
Therefore, for $1 \le p < \infty$,
\begin{align} \label{I10}
& \int_M |\nabla (-\L)^{-1} (f_{t, r}^{(\alpha)} - 1)|^{2p} \d\mu  \, \le \,  \int_M \bigg( \int_{0}^{\infty} |\nabla P_s (f_{t, r}^{(\alpha)} - 1)| \d s \bigg)^{2p}   \d \mu  \nonumber\\
& \, \leq \, c  \int_M  \bigg(  \int_{0}^{\infty} (1 \wedge s)^{-\frac12}   P_{\frac{s}{2}} \big( |P_{\frac{s}{2}} (f_{t, r}^{(\alpha)} - 1) |^2 \big)^{\frac 12}  \d s  \bigg)^{2p} \d \mu \nonumber\\
& \, \leq \, c   \bigg( \int_{0}^{\infty} (1 \wedge s)^{-\frac12} \big\| P_{\frac{s}{2}} \big( |P_{\frac{s}{2}} (f_{t, r}^{(\alpha)} - 1) |^2 \big)  \big\|_p^{\frac 12}   \d s \bigg)^{2p},
\end{align}
where in the last line we used Minkowski's integral inequality. Combining \eqref{general sobolev}, \eqref{Poin} and the semigroup property, we obtain
\begin{align} \label{I11}
 \big\| P_{\frac{s}{2}} \big( |P_{\frac{s}{2}} (f_{t, r}^{(\alpha)} - 1) |^2 \big)  \big\|_p^{\frac 12} \, &\le \, \| P_{\frac{s}{2}} \|_{1 \to p}^{\frac 12}  \cdot \big\| P_{\frac{s}{2}} (f_{t, r}^{(\alpha)} - 1) \big\|_2 \, \le \, c  (1 \wedge s)^{-\frac{d}{4p}(p - 1)} \e^{-\frac{\lambda_1 s}{4}} \big\| f_{t, r + \frac{s}{4}}^{(\alpha)} - 1 \big\|_2 \nonumber\\
\, &\le \, c  (1 \wedge s)^{-\frac{d}{4p}(p - 1)} \e^{-\frac{\lambda_1 s}{4}} \cdot \e^{-\lambda_1 r} \big\| f_{t, \frac{s}{4}}^{(\alpha)} - 1 \big\|_2.
\end{align}
Combining \eqref{I10} and \eqref{I11}, by Cauchy-Schwarz inequality,
\begin{align*}
& \int_M |\nabla (-\L)^{-1} (f_{t, r}^{(\alpha)} - 1)|^{2p} \d\mu \\
& \, \le \, c   \bigg( \int_{0}^{\infty} (1 \wedge s)^{-\frac{d}{4p}(p - 1) - \frac12} \e^{-\frac{\lambda_1 s}{4}} \cdot \e^{-\lambda_1 r}  \big\| f_{t, \frac{s}{4}}^{(\alpha)} - 1 \big\|_2   \d s \bigg)^{2p}\\
& \, \le \, c  \bigg(  \int_{0}^{\infty} (1 \wedge s)^{- \kappa} \e^{-\frac{\lambda_1 s}{2}} \d s \bigg)^p \cdot \bigg(  \int_{0}^{\infty} (1 \wedge s)^{-\frac{d}{2p}(p - 1) + \kappa - 1 } \big\| f_{t, \frac{s}{4}}^{(\alpha)} - 1 \big\|_2^2 \, \d s \bigg)^p,
\end{align*}
where $\kappa \in (0, 1)$ will be determined later. Then Minkowski's integral inequality leads to
\beq \label{converge}
\E^\nu\left[ \int_M |\nabla (-\L)^{-1} (f_{t, r}^{(\alpha)} - 1)|^{2p} \d\mu \right]^{\frac1p} \, \leq \, c_\kappa  \int_{0}^{\infty} (1 \wedge s)^{-\frac{d}{2p}(p - 1) + \kappa - 1}  \E^\nu \big[ \big\| f_{t, \frac{s}{4}}^{(\alpha)} - 1 \big\|_2^{2p} \big]^{\frac 1p}  \d s.
\eeq
Using Lemma \ref{J0}, if we set
\begin{align} \label{defofp0}
p_0 \, = \, \begin{cases}
\frac{2d + 4\alpha}{3d - 2}, & \textrm{if }\alpha \in (\frac{(d - 2)^+}{4}, \frac{d \wedge 2}{4});\\
\frac{2d + d \wedge 2}{3d - 2}, & \textrm{if } \alpha \in [\frac{d \wedge 2}{4}, \infty),
\end{cases}
\end{align}
then for $1 \leq p < p_0$ we can always find $\kappa \in (0, 1)$ and sufficiently small $\epsilon > 0$ to guarantee the right-hand side of \eqref{converge} converges, which ends the proof.

\end{proof}

\subsection{Regularization estimates}

In this sub-section, we develop a stronger error bounds on the Wasserstein distance along the heat flow in the context of diffusion processes. The same idea has been used in \cite{AG18, Z21} to deal with the optimal matching problem.

\begin{theo} \label{regularization}
Under the assumptions of Theorem \ref{main}. For $\alpha > 0$, if we choose $r = r(t) = t^{-\beta}$, where $\beta \in (0, \frac{(2\alpha)\wedge 1}{d})$, then for $t \ge t_M$,
\beqs
\sup_{\nu \in \Pr(M)} R_\alpha(t)^{-1} \E^\nu \big[ \W_2^2 \big( \mu_{t, r}^{(\alpha)}, \mu_t^{(\alpha)} \big) \big] \, \le \, 16 R_\alpha(t)^{-1} \int_{0}^{r} \sup_{\nu \in \Pr(M)} \E^\nu \big[ \| f_{t, r}^{(\alpha)} - 1 \|_2^2 \big] \d s \, + \, \Phi(t),
\eeqs
where the function $\Phi : [t_M, \infty) \to [0, \infty)$ decays rapidly to $0$ as $t \to \infty$ in the sense that for any $c > 0$,
$\lim_{t \to \infty} t^c \, \Phi(t) \, = \, 0$.
\end{theo}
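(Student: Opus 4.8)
### Proof proposal for Theorem~\ref{regularization}

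The plan is to decompose the transport cost $\W_2^2\big(\mu_{t,r}^{(\alpha)},\mu_t^{(\alpha)}\big)$ along the heat flow into a ``good'' piece that is controlled by the Benamou--Brenier-type estimate of Lemma~\ref{basic wasserstein}, and an ``exceptional'' piece supported on the low-probability event where the regularized density $f_{t,r}^{(\alpha)}$ is far from $1$, which we bound crudely by the diameter of $M$ and the concentration estimate of Proposition~\ref{uniform bound'}. Concretely, fix $\varrho \in (0,1/2)$ (to be chosen as a suitable negative power of $t$) and split $\mu_t^{(\alpha)}$ via its regularization at scale $r$: write $\mu_{t,s}^{(\alpha)} = f_{t,s}^{(\alpha)}\mu$ and interpolate. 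On the event $A = \{\,\|f_{t,r}^{(\alpha)}-1\|_\infty \le 1/2\,\}$ (or the analogous event at the relevant intermediate scales), the densities $f_{t,s}^{(\alpha)}$ stay bounded away from $0$ for all $s \le r$, so Lemma~\ref{basic wasserstein} applies with a denominator bounded below, and the triangle inequality for $\W_2$ together with Remark~\ref{remark1} gives
\beqs
\W_2^2\big(\mu_{t,r}^{(\alpha)},\mu_t^{(\alpha)}\big) \,\le\, c \int_0^r \| f_{t,s}^{(\alpha)} - 1 \|_2^2 \,\d s
\eeqs
on $A$, with a constant we can arrange to be $16$ after being careful with the factor $4$ in Lemma~\ref{basic wasserstein} and the limiting procedure $r_*\downarrow 0$. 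On $A^c$ we use $\W_2^2 \le \diam(M)^2$ and pay $\PP^\nu(A^c)$, which by Proposition~\ref{uniform bound'} is bounded by $c\exp\big(-c r^d R_\alpha(t)^{-1}/\log(r^{-1}+1)\big)$.

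The key technical point is then to check that with the choice $r = t^{-\beta}$, $\beta \in \big(0, \tfrac{(2\alpha)\wedge 1}{d}\big)$, the error term $\Phi(t)$ arising from the event $A^c$ decays faster than any polynomial. Indeed $r^d R_\alpha(t)^{-1} = t^{-\beta d} R_\alpha(t)^{-1}$, and $R_\alpha(t)^{-1}$ is $t^{2\alpha}$, $t/\log t$, or $t$ according to the value of $\alpha$; the constraint $\beta d < (2\alpha)\wedge 1$ guarantees that $r^d R_\alpha(t)^{-1}$ is a \emph{positive} power of $t$ (up to logarithmic factors), so the exponential $\exp\big(-c\,t^{\text{positive}}/\log t\big)$ beats $t^c$ for every $c>0$. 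One must also absorb the factor $R_\alpha(t)^{-1}$ in front and the $\diam(M)^2$ factor, and include the contribution of $\E^\nu[\W_2^2\,\mathbbm{1}_{A^c}]$ estimated via Cauchy--Schwarz together with the $L^q$ moment bound \eqref{lq bound} if one prefers not to use the crude diameter bound --- either route works, and the resulting $\Phi(t)$ is of the form $c\,R_\alpha(t)^{-1}\exp\big(-c\,t^{\gamma}\big)$ for some $\gamma>0$, which manifestly satisfies $\lim_{t\to\infty} t^c\Phi(t)=0$.

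The main obstacle I anticipate is making the ``good event'' argument fully rigorous when passing from the pointwise-in-$s$ comparison to the integrated bound with the clean constant $16$: one needs the density $f_{t,s}^{(\alpha)}$ to remain in, say, $[1/2,\infty)$ \emph{simultaneously for all $s\in(0,r]$}, and the natural event guaranteeing this is $\{\|f_{t,r'}^{(\alpha)}-1\|_\infty \le 1/2\ \forall r'\le r\}$ rather than just the single-scale event; here one should exploit that $f_{t,s}^{(\alpha)} = P_{s-s'}^* f_{t,s'}^{(\alpha)}$ propagates forward, so controlling the density at the \emph{smallest} relevant scale controls it at all larger scales (the heat semigroup contracts $L^\infty$-oscillation), reducing everything to a single application of Proposition~\ref{uniform bound'} at scale comparable to $r$. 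The remaining steps --- the triangle inequality for $\W_2$, the identity of Remark~\ref{remark1} converting $H^{-1,2}$-norm increments into integrals of $L^2$-norms, and the bookkeeping of constants --- are routine. Finally, taking $\sup_{\nu\in\Pr(M)}$ commutes with all these bounds since every estimate invoked (Proposition~\ref{uniform bound'}, Lemma~\ref{square bound}) is already uniform in $\nu$, which yields the stated inequality with
\beqs
\Phi(t) \,=\, c\,R_\alpha(t)^{-1}\,\diam(M)^2\,\exp\!\Big(-\tfrac{c\,t^{\beta d}}{R_\alpha(t)\log(t^\beta+1)}\Big).
\eeqs
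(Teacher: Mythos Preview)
Your approach is essentially the paper's own: split on the event $A_t=\{\|f_{t,r}^{(\alpha)}-1\|_\infty\le 1/2\}$, use Lemma~\ref{basic wasserstein} plus Remark~\ref{remark1} on $A_t$, and bound $A_t^c$ by the diameter times the tail from Proposition~\ref{uniform bound'}; the verification that $\Phi(t)$ decays super-polynomially under $\beta d<(2\alpha)\wedge1$ is exactly right.

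The ``main obstacle'' you flag, however, is a phantom, and your proposed fix for it would not work. You do \emph{not} need $f_{t,s}^{(\alpha)}$ to be bounded below for all $s\in(0,r]$. The point is to apply the \emph{first} inequality of Lemma~\ref{basic wasserstein} in a single shot with $f_0=f_{t,r}^{(\alpha)}$ and $f_1=f_{t,r_*}^{(\alpha)}$: the denominator is $f_0$ alone, which on $A_t$ is $\ge 1/2$, so
\[
\W_2^2\big(\mu_{t,r}^{(\alpha)},\mu_{t,r_*}^{(\alpha)}\big)\ \le\ 8\int_M\big|\nabla(-\L)^{-1}(f_{t,r}^{(\alpha)}-f_{t,r_*}^{(\alpha)})\big|^2\,\d\mu\ \le\ 16\int_{r_*}^{r}\|f_{t,s}^{(\alpha)}-1\|_2^2\,\d s,
\]
the last step being Remark~\ref{remark1}. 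No triangle inequality over intermediate scales is needed, and no lower bound on $f_{t,s}^{(\alpha)}$ for $s<r$ is ever used. Your heat-propagation idea goes the wrong way in any case: $f_{t,s}^{(\alpha)}=P_{s-s'}f_{t,s'}^{(\alpha)}$ only lets you push control from smaller $s'$ to larger $s$, but here $r$ is the \emph{largest} scale and $r_*\downarrow 0$, where Proposition~\ref{uniform bound'} gives nothing. Finally, to pass from $r_*>0$ to $\mu_t^{(\alpha)}$ itself, use Lemma~\ref{heat wasserstein1}(i) to see $\W_2^2(\mu_{t,r_*}^{(\alpha)},\mu_t^{(\alpha)})\le c r_*\to 0$ and invoke dominated convergence on $A_t$.
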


\begin{proof}
For $r = r(t) = t^{-\beta}$ with $\beta \in (0, \frac{(2\alpha)\wedge 1}{d})$, we consider the event
\beqs
A_t \, = \, \bigg\{ \| f_{t, r}^{(\alpha)} - 1 \|_\infty \le \frac12   \bigg\}.
\eeqs
On this event, using Lemma \ref{basic wasserstein}, for any $0 < r_* <r$,
\beqs
\W_2^2(\mu_{t, r}^{(\alpha)}, \mu_{t, r_*}^{(\alpha)}) \, \le \, 4 \int_M \frac{|\nabla (-\L)^{-1} (f_{t, r}^{(\alpha)} - f_{t, r_*}^{(\alpha)})|^2}{f_{t, r}^{(\alpha)}} \d \mu \, \le \, 8 \int_M |\nabla (-\L)^{-1} (f_{t, r}^{(\alpha)} - f_{t, r_*}^{(\alpha)})|^2 \d \mu.
\eeqs
Combining with Remark \ref{remark1}, we obtain
\beqs
\W_2^2 \big( \mu_{t, r}^{(\alpha)}, \mu_{t, r_*}^{(\alpha)} \big) \, \le \, 16 \int_{r_*}^{r} \| f_{t, s}^{(\alpha)} - 1 \|^2   \d s.
\eeqs
By Lemma \ref{heat wasserstein1} (i) and the dominated convergence Theorem,
\beqs
\E^\nu \big[ \W_2^2 \big( \mu_{t, r}^{(\alpha)}, \mu_t^{(\alpha)}) \mathbbm{1}_{A_t}  \big] \, = \, \lim_{r_* \downarrow 0} \E^\nu \big[ \W_2^2 \big( \mu_{t, r}^{(\alpha)}, \mu_{t, r_*}^{(\alpha)} ) \mathbbm{1}_{A_t}  \big].
\eeqs
Therefore
\beqs
\sup_{\nu \in \Pr(M)} R_\alpha(t)^{-1} \E^\nu \big[ \W_2^2 \big( \mu_{t, r}^{(\alpha)}, \mu_t^{(\alpha)} \big) \mathbbm{1}_{A_t} \big] \, \le \, 16 R_\alpha(t)^{-1} \int_{0}^{r} \sup_{\nu \in \Pr(M)} \E^\nu \big[ \| f_{t, r}^{(\alpha)} - 1 \|_2^2 \big] \d s.
\eeqs

On the other hand, since $\W_2^2 \big( \mu_{t, r}^{(\alpha)}, \mu_t^{(\alpha)}) \le (\diam M)^2$,
\beqs
\E^\nu \big[\W_2^2 \big( \mu_{t, r}^{(\alpha)}, \mu_t^{(\alpha)} \big) \mathbbm{1}_{A_t^c} \big] \, \le \, c \PP^\nu(A_t^c).
\eeqs
Then we set
$$\Phi(t): = \sup_{\nu \in \Pr(M)} R_\alpha(t)^{-1}\PP^\nu(A_t^c). $$
Let $\upsilon = (2\alpha) \wedge 1 - d \beta > 0$. By Proposition \ref{uniform bound'}, there exists $c > 0$ such that for any $t \ge t_M$,
\beqs
\Phi(t) \, \le \, c R_\alpha(t)^{-1} \exp \left( - \frac{ t^\upsilon}{c \log(t)^2} \right),
\eeqs
which completes the proof.
\end{proof}

\subsection{Upper bound} \label{up}

Throughout this sub-section, for initial distribution $\nu \in \Pr(M)$, we set
\beqs
L_\alpha (\nu) \, = \,
\begin{cases}
\nu ( H^{(\alpha)} ), & \textrm{if } \alpha \in (\frac{(d - 2)^+}{4}, \frac12);\\
\frac12 \sum_{i = 1}^{\infty} \lambda_i^{-2}, & \textrm{if } \alpha = \frac12;\\
\frac{2 \alpha^2}{2\alpha - 1} \sum_{i = 1}^{\infty} \lambda_i^{-2}, & \textrm{if } \alpha \in (\frac12, \infty).
\end{cases}
\eeqs
Then  $\inf_{\nu \in \Pr(M)} L_\alpha (\nu) > 0$. And for $t \ge t_M$ and $r \in (0, 1]$, we abbreviate
\beqs
E_{t, r} \, = \, R_\alpha(t)^{-1} \E^\nu \left[ \int_M |\nabla (-\L)^{-1} (f_{t, r}^{(\alpha)} - 1)|^2 \d\mu \right] - L_\alpha (\nu),
\eeqs
\beqs
F_{t, r} \, = \, \sup_{\nu \in \Pr(M)} \E^\nu \big[ \| f_{t, r}^{(\alpha)} - 1 \|_2^2 \big], \qquad Q_{t, r} \, = \, \int_{0}^{r} F_{t, s} \d s.
\eeqs

For $\varrho \in (0, 1)$, we define $\mu_{t, r, \varrho}^{(\alpha)}$ as follows
\beqs
\mu_{t, r, \varrho}^{(\alpha)} \, := \, (1 - \varrho)\mu_{t, r}^{(\alpha)} + \varrho \mu.
\eeqs
The density function of $\mu_{t, r, \varrho}^{(\alpha)}$ with respect to $\mu$ is $(1 - \varrho) f_{t, r}^{(\alpha)} + \varrho$. By the convexity of $\W_2^2$ (see \cite{V09}) and the fact $\diam M < \infty$, we get
\beq \label{normalize2}
\W_2^2 \big( \mu_{t, r, \varrho}^{(\alpha)}, \mu_{t, r}^{(\alpha)} \big) \, \leq \, \varrho \W_2^2 \big( \mu, \mu_{t, r}^{(\alpha)} \big) \, \leq \, c \varrho.
\eeq

By the triangle inequality of $\W_2$, for $\varepsilon, \varrho \in (0, 1)$ and $r \in (0, 1]$,
\begin{align} \label{triangle}
\W_2^2(\mu_t^{(\alpha)}, \mu)  & \, \leq \, (1 + \varepsilon) \W_2^2(\mu_{t, r, \varrho}^{(\alpha)}, \mu) \, + \, (1 + \varepsilon^{-1}) \W_2^2 \big(\mu_{t, r, \varrho}^{(\alpha)}, \mu_t^{(\alpha)} \big) \nonumber\\
& \, \leq \,  (1 + \varepsilon) \W_2^2(\mu_{t, r, \varrho}^{(\alpha)}, \mu) \, + \, 4 \varepsilon^{-1} \big( \W_2^2 \big(\mu_{t, r, \varrho}^{(\alpha)}, \mu_{t, r}^{(\alpha)} \big) + \W_2^2 \big( \mu_{t, r}^{(\alpha)}, \mu_t^{(\alpha)} \big) \big).
\end{align}

We start with the analysis of $\W_2^2(\mu_{t, r, \varrho}^{(\alpha)}, \mu)$. By Lemma \ref{basic wasserstein}, we yield
\begin{align*}
\W_2^2(\mu_{t, r, \varrho}^{(\alpha)}, \mu)  \, &\leq \, (1 - \varrho)^2 \int_M \frac{|\nabla (-\L)^{-1} (f_{t, r}^{(\alpha)} - 1)|^2}{\M((1 - \varrho)f_{t, r}^{(\alpha)} + \varrho, 1)} \d \mu \, \leq \, \int_M \frac{|\nabla (-\L)^{-1} (f_{t, r}^{(\alpha)} - 1)|^2}{\M((1 - \varrho)f_{t, r}^{(\alpha)} + \varrho, 1)} \d \mu\\
\, &\le \,  \int_M |\nabla (-\L)^{-1} (f_{t, r}^{(\alpha)} - 1)|^2 \cdot \big( 1 + |\M((1 - \varrho)f_{t, r}^{(\alpha)} + \varrho, 1)^{-1} - 1| \big) \d \mu.
\end{align*}
Then using H\"older's inequality and Lemma \ref{J1}, for $\frac{p_0}{p_0 - 1} < q < \infty$ and $t \ge t_M$,
\begin{align*}
R_\alpha(t)^{-1} \E^\nu \big[ \W_2^2(\mu_{t, r, \varrho}^{(\alpha)}, \mu) \big]  &\, \leq \,R_\alpha(t)^{-1} \E^\nu \left[ \int_M |\nabla (-\L)^{-1} (f_{t, r}^{(\alpha)} - 1)|^2 \d \mu \right]\\
& \quad \, + \, c_q \E^\nu \left[ \int_M |\M((1 - \varrho)f_{t, r}^{(\alpha)} + \varrho, 1)^{-1}-1 |^{q} \d \mu \right]^{\frac1q},
\end{align*}
where
\beqs
c_q \, = \, \sup_{\substack{\nu \in \Pr(M) \\ t \ge t_M, \, r > 0}}  R_\alpha(t)^{-1} \E^\nu\left[ \int_M |\nabla (-\L)^{-1} (f_{t, r}^{(\alpha)} - 1)|^{\frac{2q}{q - 1}} \d\mu \right]^{1 - \frac1q}  \, < \, \infty.
\eeqs
If we choose $\varrho = \varrho(t) = t^{-\gamma}$ for some $\gamma > 0$, then using Lemma \ref{fluc of density} for $Y = f_{t, r}^{(\alpha)} \in L^2(\PP^\nu \otimes \mu)$, we obtain
\beq
R_\alpha(t)^{-1} \E^\nu \big[ \W_2^2(\mu_{t, r, \varrho}^{(\alpha)}, \mu) \big] \, \leq \, L_\alpha (\nu) \, + \, E_{t, r} \, + \, c_{q, \gamma} \log(t)  F_{t, r}^{\frac{1}{q\vee2}}.
\eeq

On the other hand, if we choose $r = r(t) = t^{-\beta}$ for some $\beta \in (0, \frac{(2\alpha)\wedge 1}{d})$, then Theorem \ref{regularization} tells us that
\beq \label{normalize3}
R_\alpha(t)^{-1} \E^\nu \big[ \W_2^2 \big( \mu_{t, r}^{(\alpha)}, \mu_t^{(\alpha)} \big) \big] \, \le \, 16 Q_{t, r} \, + \, \Phi(t).
\eeq
Combining \eqref{normalize2}-\eqref{normalize3},
\begin{align*}
R_\alpha(t)^{-1} \E^\nu \big[ \W_2^2(\mu_{t}^{(\alpha)}, \mu) \big] \, &\le \, L_\alpha (\nu) \, + \, c \big( E_{t, r} + c_{q, \gamma} \log(t) F_{t, r}^{\frac{1}{q\vee2}} \big) \, + \,  c (\varepsilon + \varepsilon^{-1} Q_{t, r})\\
  \, &\qquad + \, c \varepsilon^{-1} (R_\alpha(t)^{-1} t^{-\gamma} + \Phi(t)).
\end{align*}
By Corollary \ref{square bound'}, for the preceding choice of $r = r(t)$, one has $\lim_{t \to \infty} Q_{t, r} = 0$. We thus choose $\varepsilon = Q_{t, r}^{1/2}$ and yield
\begin{align*}
R_\alpha(t)^{-1} \E^\nu \big[ \W_2^2(\mu_{t}^{(\alpha)}, \mu) \big] \, - \, L_\alpha (\nu) \, &\le  \, c \big( E_{t, r} \, + \,c_{q, \gamma} \log(t) F_{t, r}^{\frac{1}{q\vee2}} \, + \, Q_{t, r}^{\frac12} \big)\\
\, & \qquad + \, c Q_{t, r}^{-\frac12} (R_\alpha(t)^{-1} t^{-\gamma} \, + \,  \Phi(t)).
\end{align*}
With $\alpha$ and $\beta$ fixed, if $\gamma > 0$ is chosen large enough, the term $Q_{t, r}^{-\frac12} (R_\alpha(t)^{-1} t^{-\gamma} + \Phi(t))$ makes no contribution to the magnitude of the above inequality's right-hand side as $t \to \infty$. It therefore suffices to control
\beqs
K(t) \, := \, \sup_{\nu \in \Pr (M)}| E_{t, r} | \, + \, \log(t) F_{t, r}^{\frac{1}{q\vee2}} \, + \, Q_{t, r}^{\frac12},
\eeqs
where $q > \frac{p_0}{p_0 - 1}$ and $r = r(t) = t^{-\beta}$ for $\beta \in (0, \frac{(2\alpha)\wedge 1}{d})$.

Next we take the case $d = 1$, $\alpha > \frac12$ as an example. The upper bounds of $\sup_{\nu \in \Pr (M)}| E_{t, r} |$ (respectively, $F_{t, r}$ and $Q_{t, r}$) have been obtained in Proposition \ref{energy} and \ref{energy'} (respectively, Corollary \ref{square bound'}). And by \eqref{defofp0},  in this case, the critical exponent $p_0 = 3 > 2$. Then it is immediate that for $t \ge t_M$ and $\frac32 < q \le 2$,
\beqs
\sup_{\nu \in \Pr (M)}| E_{t, r} | \, \le \,c (t^{-\beta} + t^{-2(\alpha \wedge 1)+1}), \quad \log(t) F_{t, r}^{\frac{1}{q\vee2}} \, \le \, c \log(t) t^{-\frac12}, \quad Q_{t, r}^{\frac12} \, \le \, c t^{-\frac{\beta}{2}}, \quad \beta \in (0, 1).
\eeqs
Optimizing in $\beta$ implies that
\beqs
K(t) \, = \,
\begin{cases}
O(t^{-2\alpha + 1}), & \textrm{if } \alpha \in (\frac12, \frac34);\\
O_\epsilon(t^{-\frac12 + \epsilon}), & \textrm{if } \alpha \in [\frac34, \infty).
\end{cases}
\eeqs
Dealing the other cases in a similar way, it finally follows that
\begin{enumerate}[(i)]
\item when $d = 1$,
\beqs
K(t) \, = \,
\begin{cases}
O_\epsilon(t^{-2\alpha^2 - \frac{\alpha}{2}+\epsilon}), & \textrm{if } \alpha \in (0, \frac14];\\
O_\epsilon(t^{-\alpha + \epsilon}), & \textrm{if } \alpha  \in (\frac14, \frac13];\\
O(t^{-|2\alpha - 1|}), & \textrm{if } \alpha \in (\frac13, \frac34) \backslash \{ \frac12 \};\\
O(\log(t)^{-1}), & \textrm{if } \alpha  = \frac12;\\
O_\epsilon(t^{-\frac12 + \epsilon}), & \textrm{if } \alpha \in [\frac34, \infty);
\end{cases}
\eeqs

\item when $d = 2$,
\beqs
K(t) \, = \,
\begin{cases}
O_\epsilon(t^{- \alpha^2 + \epsilon}), & \textrm{if } \alpha \in (0, \sqrt{2}-1];\\
O(t^{-|2\alpha - 1|}), & \textrm{if } \alpha \in (\sqrt{2}-1, \frac58) \backslash \{ \frac12 \};\\
O(\log(t)^{-1}), & \textrm{if } \alpha  = \frac12;\\
O_\epsilon(t^{-\frac14 + \epsilon}), & \textrm{if } \alpha \in [\frac58, \infty);
\end{cases}
\eeqs

\item when $d = 3$,
\beqs
K(t) \, = \,
\begin{cases}
O_\epsilon(t^{-\frac{2\alpha^2}{3} + \frac{\alpha}{6} + \epsilon}), & \textrm{if } \alpha \in (\frac14, \frac{\sqrt{217}-11}{8}];\\
O(t^{-|2\alpha - 1|}), & \textrm{if } \alpha \in (\frac{\sqrt{217}-11}{8}, \frac{13}{24}) \backslash \{ \frac12 \};\\
O(\log(t)^{-1}), & \textrm{if } \alpha  = \frac12;\\
O_\epsilon(t^{-\frac1{12} + \epsilon}), & \textrm{if } \alpha \in [\frac{13}{24}, \infty),
\end{cases}
\eeqs
\end{enumerate}
which completes the proof for the upper bound part.

\subsection{Lower bound} \label{low}
By Lemma \ref{heat wasserstein1} (ii), we have
\beq \label{heat contract}
\W_2^2(\mu_t^{(\alpha)}, \mu) \ge \e^{2Kr} \W_2^2(\mu_{t, r}^{(\alpha)}, \mu).
\eeq
It is therefore sufficient to evaluate $\W_2^2(\mu_{t, r}^{(\alpha)}, \mu)$.

By the Kantorovich duality (cf. \cite{V09}),
\beqs
\frac 12 \W_2^2(\mu_{t, r}^{(\alpha)}, \mu) \, \ge \, \sup_{(\varphi_0, \varphi_1) \in \mathscr{C}} \big\{ \mu_{t, r}^{(\alpha)}(\varphi_0) \, + \, \mu(\varphi_1) \big\},
\eeqs
where
\beqs
\mathscr{C} \, = \, \bigg\{ (\varphi_0, \varphi_1)  :  \varphi_0, \varphi_1 \in C_b(M), \, \varphi_0(x) + \varphi_1(y) \le \frac{\rho(x, y)^2}{2} \textrm{ for any } x, y \in M   \bigg\}.
\eeqs
Since $\d \mu_{t, r}^{(\alpha)} = f_{t, r}^{(\alpha)} \d \mu$, we obtain
\beqs
\frac 12 \W_2^2(\mu_{t, r}^{(\alpha)}, \mu) \, \ge \, \sup_{(\varphi_0, \varphi_1) \in \mathscr{C}} \bigg\{ \int_M \varphi_0 (f_{t, r}^{(\alpha)} - 1) \d \mu  + \int_M (\varphi_0 + \varphi_1) \d \mu  \bigg\}.
\eeqs
For $\varphi_0 = (-\L)^{-1} (f_{t, r}^{(\alpha)} - 1)$, we take
\beqs
\varphi_1 (x) \, : = \, \inf_{y \in M}\bigg\{-\varphi_0(y) + \frac{\rho(x, y)^2}{2}  \bigg\}, \quad x \in M.
\eeqs
Then applying Corollary 3.3 together with (3.3) of \cite{AMB}, there exists $c_* > 0$ such that
\beq \label{Dual}
\frac 12 \W_2^2(\mu_{t, r}^{(\alpha)}, \mu) \, \geq \, \bigg( 1 - \frac{\exp \big(c_* \big( \| f_{t, r}^{(\alpha)} - 1 \|_\infty + \| f_{t, r}^{(\alpha)} - 1 \|_\infty^2 \big) \big)}{2}  \bigg) \int_M |\nabla (-\L)^{-1} (f_{t, r}^{(\alpha)} - 1)|^2 \d \mu.
\eeq
Without loss of generality, we may assume $c_* \ge 1$. Now we reformulate \eqref{Dual} in the form of the expectation. Consider the following event
\beqs
F \, = \, \big\{ \| f_{t, r}^{(\alpha)} - 1 \|_\infty \, \leq \, c_*^{-1} 2^{-1} \big\}.
\eeqs
On $F$, by \eqref{Dual} and a Taylor expansion, there exists $c > 0$ such that
\begin{align} \label{Part1}
\W_2^2(\mu_{t, r}^{(\alpha)}, \mu)
\,  \geq \, \int_M |\nabla (-\L)^{-1} (f_{t, r}^{(\alpha)} - 1)|^2 \d \mu \, - \, c \| f_{t, r}^{(\alpha)} - 1 \|_\infty \int_M |\nabla (-\L)^{-1} (f_{t, r}^{(\alpha)} - 1)|^2 \d \mu.
\end{align}
We abbreviate
\beqs
U \, = \, \| f_{t, r}^{(\alpha)} - 1 \|_\infty, \qquad V \, = \, \int_M |\nabla (-\L)^{-1} (f_{t, r}^{(\alpha)} - 1)|^2 \d \mu.
\eeqs
Then
\begin{align} \label{lower bound}
& \E^\nu \big[ \W_2^2(\mu_{t, r}^{(\alpha)}, \mu) \big] \, \ge \, \E^\nu \big[ \W_2^2(\mu_{t, r}^{(\alpha)}, \mu) \mathbbm{1}_F \big] \, \ge \, \E^\nu[V  \mathbbm{1}_F] - c \E^\nu[ U V \mathbbm{1}_{F}] \nonumber\\
\, &= \, \E^\nu[V] -  \E^\nu[V \mathbbm{1}_{F^c}] - c \E^\nu \big[ U V  \mathbbm{1}_{F}\big] \, \ge \, \E^\nu[V] - c \E^\nu \big[ ( U \wedge 1 ) V  \big]\nonumber\\
\, & \ge \, \E^\nu[V]  - c \E^\nu \big[ U V \big].
\end{align}
Combined with H\"older's inequality and Lemma \ref{J1}, it follows that for $1 < p < p_0$,
\begin{align*}
R_\alpha(t)^{-1} \E^\nu \big[ \W_2^2(\mu_{t, r}^{(\alpha)}, \mu) \big] \, &\ge \, R_\alpha(t)^{-1} \E^\nu[V] \, - \, c_p \E^\nu  \big[ U^{\frac{p}{p-1}} \big]^{1 - \frac1p}\\
\, & \ge \, L_\alpha(\nu) \, + \, E_{t, r} \, - \, c_p \E^\nu  \big[ U^{\frac{p}{p-1}} \big]^{1 - \frac1p},
\end{align*}
where
\beqs
c_p \, = \, \sup_{\substack{\nu \in \Pr(M) \\ t \ge t_M, \, r > 0}}  R_\alpha(t)^{-1} \E^\nu\left[ \int_M |\nabla (-\L)^{-1} (f_{t, r}^{(\alpha)} - 1)|^{2p} \d\mu \right]^{\frac1p}  \, < \, \infty.
\eeqs
Combining this estimate with \eqref{heat contract}, since $\e^{Kr} = 1 + Kr + o(r)$ as $r \to 0$, we obtain
\beq
R_\alpha(t)^{-1} \E^\nu \big[ \W_2^2(\mu_t^{(\alpha)}, \mu) \big] \, \ge \, L_\alpha(\nu) \, - \, c \big( r + \sup_{\nu \in \Pr (M)} | E_{t, r} | + \sup_{\nu \in \Pr (M)} \E^\nu  \big[ \| f_{t, r}^{(\alpha)} - 1 \|_\infty^q \big]^{\frac1q} \big),
\eeq
where $\frac{p_0}{p_0 - 1} < q < \infty$ with $p_0$ given by \eqref{defofp0}. If we choose $r = r(t) = t^{-\zeta}$ for some $\zeta > 0$, it is therefore sufficient to control
\beq
\widetilde{K}(t) \, := \, r + \sup_{\nu \in \Pr (M)} | E_{t, r} | + \sup_{\nu \in \Pr (M)} \E^\nu  \big[ \| f_{t, r}^{(\alpha)} - 1 \|_\infty^q \big]^{\frac1q},
\eeq
where $q > \frac{p_0}{p_0 - 1}$. Combining with Proposition \ref{energy}, \ref{energy'}, \ref{uniform bound'} and then optimizing in $\zeta$, we conclude that
\begin{enumerate}[(i)]
\item when $d = 1$,
\beqs
\widetilde{K}(t) \, = \,
\begin{cases}
O(t^{-\frac{4\alpha^2 + \alpha}{4\alpha + 2}}\log(t)^{\frac12}), & \textrm{if } \alpha \in (0, \frac14);\\
O(t^{-\frac16}\log(t)), & \textrm{if } \alpha = \frac14;\\
O(t^{-\frac{2\alpha}{3}}\log(t)^{\frac12}), & \textrm{if } \alpha \in (\frac14, \frac38];\\
O(t^{-|2\alpha - 1|}), & \textrm{if } \alpha \in (\frac38, \frac23)\backslash \{ \frac12 \};\\
O(\log(t)^{-1}), & \textrm{if } \alpha  = \frac12;\\
O(t^{-\frac13} \log(t)^{\frac12}), & \textrm{if } \alpha \in [\frac23, \infty);
\end{cases}
\eeqs

\item when $d = 2$,
\beqs
\widetilde{K}(t) \, = \,
\begin{cases}
O(t^{-\frac{2\alpha^2}{2\alpha + 1}} \log(t)), & \textrm{if } \alpha \in (0, \frac{\sqrt{6}}{6}];\\
O(t^{-|2\alpha - 1|}), & \textrm{if } \alpha \in (\frac{\sqrt{6}}{6}, \frac58)\backslash \{ \frac12 \};\\
O(\log(t)^{-1}), & \textrm{if } \alpha  = \frac12;\\
O(t^{-\frac14} \log(t)), & \textrm{if } \alpha \in [\frac58, \infty);
\end{cases}
\eeqs

\item when $d = 3$,
\beqs
\widetilde{K}(t) \, = \,
\begin{cases}
O(t^{-\frac{4\alpha^2 - \alpha}{4\alpha + 2}}\log(t)^{\frac12}), & \textrm{if } \alpha \in (\frac14, \frac{1+\sqrt{97}}{24}];\\
O(t^{-|2\alpha - 1|}), \quad & \textrm{if } \alpha \in (\frac{1+\sqrt{97}}{24}, \frac{9}{16})\backslash \{ \frac12 \};\\
O(\log(t)^{-1}), & \textrm{if } \alpha  = \frac12;\\
O(t^{-\frac18}\log(t)^{\frac12}), & \textrm{if } \alpha \in [\frac{9}{16}, \infty).
\end{cases}
\eeqs

\end{enumerate}

\section{Proofs of Theorems \ref{main2'} and \ref{main2}} \label{s5}
\setcounter{equation}{0}

\subsection{Proof of Theorem \ref{main2'}}

By the triangle inequality, Lemma \ref{heat wasserstein1} (i) and Lemma \ref{basic wasserstein}, for any $r \in (0, 1]$,
\begin{align*}
\E^\nu \big[ \W_2^2 ( \mu_t^{(\alpha)}, \mu )  \big] \, &\leq \, 2 \E^\nu \big[ \W_2^2 ( \mu_t^{(\alpha)}, \mu_{t, r}^{(\alpha)} )  \big] \, + \, 2 \E^\nu \big[ \W_2^2 ( \mu_{t, r}^{(\alpha)}, \mu  )  \big]\\
\, & \leq \, c r \, + \, 4 \E^\nu \left[ \int_M |\nabla (-\L)^{-1} (f_{t, r}^{(\alpha)} - 1)|^2 \d \mu  \right].
\end{align*}
Combined this inequality with Remark \ref{remark0}, it is known that
\begin{align*}
\E^\nu \big[ \W_2^2 ( \mu_t^{(\alpha)}, \mu )  \big] \, & \leq \, c r \,  + \, c t^{-2\alpha}  \iint_{\{0 \, \leq \, u \, \leq \, v \, \leq \, t  \}}  \e^{-\lambda_1 u} \| h_{v - u + 2r}^{(1)} \|_\infty \, u^{\alpha - 1} v^{\alpha - 1} \d u   \d v\\
\, & \qquad + \, c
\begin{cases}
t^{-2\alpha} \overline{h}^{(2\alpha + 1)}_{2r}, & \textrm{if } \alpha \in (0, \frac12);\\
t^{-1} \big( \ln t \cdot \overline{h}^{(2)}_{2r}  \, + \, |g^{(2)}_{2r}| \big), & \textrm{if } \alpha = \frac12;\\
t^{-1}\overline{h}^{(2)}_{2r}, & \textrm{if } \alpha \in (\frac12, \infty).
\end{cases}
\end{align*}
Then applying Lemma \ref{trace} and optimizing in $r \in (0, 1]$, we conclude the proof.

\subsection{Proof of Theorem \ref{main2}}

Before we give the proof of Theorem \ref{main2}, we state some useful lemmas, whose proofs are almost the same as those presented in the proceeding sections. To avoid repetition, we only sketch them. In the following up, we always assume $\alpha \in (\frac12, \infty)$.

By the method in Lemma \ref{square bound} and Proposition \ref{energy}, the estimates of $L^2$-norm and $H^{-1, 2}$-Sobolev norm of $f_{t, r}^{(\alpha)} - 1$ are achieved as follows.

\begin{lem} \label{l2}
Assume that $M$ is compact with dimension $d  = 4$. Then there exists $c > 0$ such that for any $t \ge t_M$ and $r \in (0, 1]$,
\begin{enumerate}[(i)]
\item
\beqs
\sup_{\nu \in \Pr(M)} \E^\nu \big[ \| f_{t, r}^{(\alpha)} - 1 \|_2^2 \big]  \, \leq \, c t^{-1} r^{-1}.
\eeqs

\item
\begin{align*}
 \sup_{\nu \in \Pr(M)} & \bigg|  \E^\nu \left[ \int_M |\nabla (-\L)^{-1} (f_{t, r}^{(\alpha)} -  1)|^2 \d\mu \right] \, - \,  \frac{2\alpha^2}{2\alpha - 1} \cdot t^{-1} \sum_{i = 1}^{\infty} \lambda_i^{-2} \e^{-2r\lambda_i} \bigg|\\
\quad \,& \leq \, c \big( t^{-2\alpha} \log(r^{-1} + 1) \, + \,  t^{-2(\alpha \wedge 1)} \big).
\end{align*}
\end{enumerate}
\end{lem}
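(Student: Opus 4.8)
Both estimates are obtained by repeating, now for $d=4$ and $\alpha\in(\tfrac12,\infty)$, the arguments of Lemma~\ref{square bound} and Proposition~\ref{energy}, and then reading off the relevant trace bounds from Lemma~\ref{trace}. I will only point out where the four-dimensional case leads to a different outcome.

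\textbf{Part (i).} I would start from the identity \eqref{L2 norm},
$$
\E^\nu\big[\|f_{t,r}^{(\alpha)}-1\|_2^2\big]=\frac{2\alpha^2}{t^{2\alpha}}\int_0^t\int_u^t\nu\big(P_uh^{(0)}_{v-u+2r}\big)\,u^{\alpha-1}v^{\alpha-1}\,\d v\,\d u,
$$
and split it as $\I_1+\I_2(\nu)$ exactly as in \eqref{decomposition}. For the stationary part, Lemma~\ref{I4} (with $\alpha>\tfrac12$) gives $\I_1\le c\,t^{-1}\overline h^{(1)}_{2r}$; since the exponent $\theta=1$ now lies strictly below the critical value $d/2=2$, Lemma~\ref{trace} only yields $\overline h^{(1)}_{2r}\le c\,r^{-1}$ for $r\in(0,1]$, which is the first place $d=4$ genuinely differs from $d\le3$. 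For the remainder, the $L^\infty$-Poincar\'e inequality \eqref{general Poin} together with the heat kernel bound gives $\sup_{\nu}|\nu(P_uh^{(0)}_{v-u+2r}-\overline h^{(0)}_{v-u+2r})|\le c\,e^{-\lambda_1(v+2r)}(1\wedge(v-u+2r))^{-2}$, and carrying out the $\d v\,\d u$ integration as in the proof of Lemma~\ref{square bound}—the inner $u$-integral producing a factor $r^{-1}$ near $u=v$, and the outer integral $\int_0^\infty v^{2\alpha-2}e^{-\lambda_1 v}\,\d v$ converging because $\alpha>\tfrac12$—leads to $\sup_{\nu}|\I_2(\nu)|\le c\,t^{-2\alpha}r^{-1}$. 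Since $t^{-2\alpha}\le t^{-1}$, adding the two bounds gives (i).

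\textbf{Part (ii).} I would start from \eqref{sobolev norm2},
$$
\E^\nu\Big[\int_M|\nabla(-\L)^{-1}(f_{t,r}^{(\alpha)}-1)|^2\,\d\mu\Big]=\frac{2\alpha^2}{t^{2\alpha}}\int_0^t\int_u^t\nu\big(P_uh^{(1)}_{v-u+2r}\big)\,u^{\alpha-1}v^{\alpha-1}\,\d v\,\d u,
$$
and split it into a stationary part, obtained by replacing $P_uh^{(1)}$ by $\overline h^{(1)}$, and a remainder; note that, unlike in Proposition~\ref{energy}, the claimed main term still carries the factors $e^{-2r\lambda_i}$, so no further comparison is needed there. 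For the stationary part, the computation \eqref{I1 bound} in the proof of Proposition~\ref{energy} extracts $\frac{2\alpha^2}{2\alpha-1}t^{-1}\sum_i\lambda_i^{-2}e^{-2r\lambda_i}$, leaving an error of order $t^{-2\alpha}\,\overline h^{(2\alpha+1)}_{2r}$ when $\alpha\in(\tfrac12,1)$ and $t^{-2}\,\overline h^{(3)}_{2r}$ when $\alpha\ge1$; since both $2\alpha+1$ and $3$ exceed $d/2=2$, Lemma~\ref{trace} bounds these traces by a constant, so the error is $O(t^{-2(\alpha\wedge1)})$. For the remainder, $\|h^{(1)}_s\|_\infty\le c\,e^{-\lambda_1 s}(1\wedge s)^{-1}$ by Lemma~\ref{trace} (as $1<d/2$), and the resulting integral $\int_0^t\big(\int_0^v(1\wedge(v-u+2r))^{-1}u^{\alpha-1}\,\d u\big)e^{-\lambda_1 v}v^{\alpha-1}\,\d v$ has inner integral of order $v^{\alpha-1}\log(r^{-1}+1)$ up to terms that do not affect the final order—here the $(v-u+2r)^{-1}$ singularity produces a logarithm instead of the integrable power appearing for $d\le3$—and the outer integral again converges because $\alpha>\tfrac12$, so the remainder is $O(t^{-2\alpha}\log(r^{-1}+1))$. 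Adding the two contributions and noting that all the bounds are uniform in $\nu$ yields (ii).

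The proof involves no genuine difficulty, being essentially a line-by-line transcription of the two earlier arguments; the only point needing attention is the bookkeeping of the trace exponents at $d=4$. The trace $\overline h^{(1)}_{2r}$ is now sub-critical and blows up like $r^{-1}$ as $r\downarrow0$, which is exactly what forces the factor $r^{-1}$ in (i) and the factor $\log(r^{-1}+1)$ in the error term of (ii); one also relies throughout on the convergence of the time integrals $\int_0^\infty v^{2\alpha-2}e^{-\lambda_1 v}\,\d v$, which holds precisely because of the standing assumption $\alpha\in(\tfrac12,\infty)$.
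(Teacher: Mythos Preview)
Your proposal is correct and follows essentially the same approach as the paper's proof, which is itself a terse sketch that writes down the two intermediate bounds (the $\I_1+\I_2$ and $\J_1+\J_2$ decompositions from Lemma~\ref{square bound} and Proposition~\ref{energy}) and then invokes Lemma~\ref{trace}. You have correctly identified the two places where $d=4$ changes the outcome: $\overline h^{(1)}_{2r}\sim r^{-1}$ forces the $r^{-1}$ in~(i), and $\|h^{(1)}_s\|_\infty\sim s^{-1}$ produces the logarithm in the remainder of~(ii); your observation that the claimed main term retains the factors $e^{-2r\lambda_i}$, so that no analogue of \eqref{error bound} is needed, is also on point.
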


\begin{proof}
Using the notations of Lemma \ref{square bound} and Proposition \ref{energy}, we have
\beqs
\sup_{\nu \in \Pr(M)} \E^\nu \big[ \| f_{t, r}^{(\alpha)} - 1 \|_2^2 \big]  \, \leq \, c \left( t^{-1}  \overline{h}^{(1)}_{2r} \, + \, t^{-2 \alpha}  \iint_{\{0 \, \leq \, u \, \leq \, v \, \leq \, t  \}}  \e^{- \lambda_1 u} \|  h_{v - u + 2r}^{(0)} \|_\infty \,  \d u   \d v \right),
\eeqs
and
\begin{align*}
& \sup_{\nu \in \Pr(M)} \bigg| \E^\nu \left[ \int_M |\nabla (-\L)^{-1} (f_{t, r}^{(\alpha)} - 1)|^2 \d\mu \right] \, - \, \frac{2\alpha^2}{2\alpha - 1} \cdot t^{-1} \sum_{i = 1}^{\infty} \lambda_i^{-2} \e^{-2r\lambda_i} \bigg|\\
& \, \leq \, c t^{-2\alpha} \iint_{\{0 \, \leq \, u \, \leq \, v \, \leq \, t  \}}  \e^{-\lambda_1 u} \| h_{v - u + 2r}^{(1)} \|_\infty  \d u   \d v \, + \, c
\begin{cases}
t^{-2\alpha} \overline{h}_{2r}^{(2\alpha + 1)} , & \textrm{if }\alpha \in (\frac12, 1);\\
t^{-2} \overline{h}_{2r}^{(3)}, & \textrm{if }\alpha \in [1, \infty).
\end{cases}.
\end{align*}
Combining this with Lemma \ref{trace}, we complete the proof.
\end{proof}

Another ingredient we shall use is the fluctuation bounds proved in Section \ref{s3}. Note that Proposition \ref{uniform bound'} and Corollary \ref{general p} work for any dimension $d$. As a result we have the following lemma.
\begin{lem} \label{con}
Assume that $M$ is compact with dimension $d  = 4$. Then for any $q \ge 1$, there exists $c_q > 0$ such that for any $t \ge t_M$ and $r \in (0, 1]$,
\begin{enumerate}[(i)]
\item
\beqs
\sup_{\nu \in \Pr(M)} \E^\nu \big[ \| f_{t, r}^{(\alpha)} - 1 \|_\infty^q \big]^{\frac1q}  \, \leq \, c_q t^{-\frac12} r^{-2} \log(r^{-1} + 1)^{\frac12}.
\eeqs

\item
\beqs
\sup_{\nu \in \Pr(M)} \E^\nu \big[ \| f_{t, r}^{(\alpha)} - 1 \|_2^{2q} \big]^{\frac1q}  \, \leq \, c_q t^{-1} r^{-4}.
\eeqs

\end{enumerate}

\end{lem}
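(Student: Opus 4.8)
The plan is to obtain both estimates as direct specializations of the dimension-free fluctuation bounds already established in Section~\ref{s3}, taking $d = 4$ throughout and using that $R_\alpha(t) = t^{-1}$ for $\alpha \in (\tfrac12, \infty)$, which is the standing assumption here.

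For part (i), I would invoke Proposition~\ref{uniform bound'}: its conclusion \eqref{lq bound} states that for every $q \ge 1$ there is $c_q > 0$ with
\beqs
\sup_{\nu \in \Pr(M)} \E^\nu\big[\|f_{t,r}^{(\alpha)} - 1\|_\infty^q\big]^{\frac1q} \, \le \, c_q \, R_\alpha(t)^{\frac12} \, r^{-\frac{d}{2}} \, \log(r^{-1}+1)^{\frac12},
\eeqs
and substituting $d = 4$ together with $R_\alpha(t)^{1/2} = t^{-1/2}$ gives exactly the bound $c_q \, t^{-1/2} r^{-2} \log(r^{-1}+1)^{1/2}$. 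The only remark needed is that the proof of Proposition~\ref{uniform bound'} carries no dimension restriction: the sub-Gaussian increment bound \eqref{Lip} rests on the local Poincar\'e inequality and the heat-kernel bound \eqref{upper pt}, both valid in every dimension, and the Dudley entropy estimate \eqref{gammau} uses only the volume-counting bound $N(M,\rho,\epsilon) \le \max\{c\epsilon^{-d},1\}$; hence the argument transfers verbatim with $d = 4$.

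For part (ii), I would invoke Corollary~\ref{general p}, whose statement $\sup_{\nu} \E^\nu[\|f_{t,r}^{(\alpha)} - 1\|_2^{2q}]^{1/q} \le c_q R_\alpha(t) r^{-d}$ again holds in all dimensions, its proof combining Minkowski's integral inequality, Lemma~\ref{fluctuation1} (valid for all $\alpha > 0$), and \eqref{upper pt}. Setting $d = 4$ and $R_\alpha(t) = t^{-1}$ yields $c_q \, t^{-1} r^{-4}$, as claimed. Since both parts reduce to inserting $d = 4$ and the value of $R_\alpha$ into results already in hand, there is no genuine obstacle; the only point deserving a sentence of justification is precisely the observation---made in the text just before the lemma---that Proposition~\ref{uniform bound'} and Corollary~\ref{general p} were proved without any dimension constraint, so the case $d = 4$ (excluded from Theorems~\ref{main}--\ref{main1'} but needed for Theorem~\ref{main2}) is covered.
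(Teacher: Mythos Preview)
Your proposal is correct and follows exactly the paper's approach: the paper does not give a separate proof of this lemma but merely remarks (immediately before the statement) that Proposition~\ref{uniform bound'} and Corollary~\ref{general p} are dimension-free, so specializing them to $d=4$ with $R_\alpha(t)=t^{-1}$ yields both items. Your write-up simply makes this explicit.
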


Interpolating between Lemma \ref{l2} (ii) and Lemma \ref{con} (ii), by the argument in Lemma \ref{J0}, the following lemma is established.
\begin{lem} \label{inter}
Assume that $M$ is compact with $d  = 4$. Given $\epsilon > 0$. Then for any $1 \le p < \infty$, there exists $c > 0$ such that for any $1 \le p < \infty$, $t\geq t_M$ and $s > 0$,
\beqs
\sup_{\nu \in \Pr(M)}  \E^\nu \big[ \| f_{t, s}^{(\alpha)} - 1 \|_2^{2p} \big]^{\frac1p}  \, \leq \, c \e^{-2\lambda_1 s} t^{-1} (1 \wedge s)^{-4 + \frac3p - \epsilon}.
\eeqs
\end{lem}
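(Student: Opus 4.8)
The strategy is to follow the same interpolation scheme as in the proof of Lemma~\ref{J0}, now feeding in the four‑dimensional inputs: the plain $L^2(\mu)$-estimate with one expectation power (Lemma~\ref{l2}(i)) and the high‑moment $L^2(\mu)$-estimate (Lemma~\ref{con}(ii)). First I would reduce to $s\in(0,1]$. Since $f_{t,s}^{(\alpha)}-1=\sum_{i\ge1}\e^{-\lambda_i s}\mu_t^{(\alpha)}(\phi_i)\phi_i$, using $\lambda_i\ge\lambda_1$ (equivalently the Poincar\'e inequality \eqref{Poin}) one has the pathwise bound $\|f_{t,s}^{(\alpha)}-1\|_2\le\e^{-\lambda_1(s-1)}\|f_{t,1}^{(\alpha)}-1\|_2$ for every $s\ge1$; raising to the power $2p$ and taking $\E^\nu$ reduces the claim for $s\ge1$ to its value at $s=1$, the extra factor $\e^{-2\lambda_1(s-1)}$ becoming $\e^{-2\lambda_1 s}$ up to a constant (and $(1\wedge s)^{-4+3/p-\epsilon}=1$ there). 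It thus remains to treat $s\in(0,1]$, on which $\e^{-2\lambda_1 s}$ is a bounded constant and $1\wedge s=s$.

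For $s\in(0,1]$, put $Y=\|f_{t,s}^{(\alpha)}-1\|_2^2\ge0$, so the quantity to be bounded is the $L^p(\PP^\nu)$-norm of $Y$. Lemma~\ref{l2}(i) yields $\|Y\|_{L^1}\le c\,t^{-1}s^{-1}$ and Lemma~\ref{con}(ii) yields $\|Y\|_{L^q}\le c_q\,t^{-1}s^{-4}$ for every $q\ge1$, both uniformly in $\nu\in\Pr(M)$. By the Lyapunov interpolation inequality $\|Y\|_{L^p}\le\|Y\|_{L^1}^{1-\lambda}\|Y\|_{L^q}^{\lambda}$, valid for $1\le p\le q$ with $\tfrac1p=(1-\lambda)+\tfrac{\lambda}{q}$, and since the factor $t^{-1}$ is common to both endpoints, we get $\|Y\|_{L^p}\le C_q\,t^{-1}\,s^{-(1-\lambda)-4\lambda}=C_q\,t^{-1}\,s^{-1-3\lambda}$. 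Here $\lambda=(1-\tfrac1p)\bigl(1+\tfrac1{q-1}\bigr)$ decreases to $1-\tfrac1p$ as $q\to\infty$, so for a given $\epsilon>0$ and $p$ one fixes $q=q(p,\epsilon)<\infty$ large enough that $1+3\lambda\le4-\tfrac3p+\epsilon$; since $s\le1$, this gives $s^{-1-3\lambda}\le s^{-4+3/p-\epsilon}$, the asserted bound. (As in Lemma~\ref{J0}, one could absorb any stray logarithmic factor via $\log(s^{-1}+1)\le c_\epsilon s^{-\epsilon}$, but here both inputs are already pure powers.)

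I do not anticipate a genuine obstacle; the argument is purely a bookkeeping of exponents, parallel to Lemma~\ref{J0}. The one point worth noting is that the arbitrarily small loss $\epsilon$ is intrinsic to the method: the endpoint exponent $q=\infty$ is not available because the constants $c_q$ in Lemma~\ref{con}(ii) blow up with $q$, so $q$ must be kept finite, which keeps $1+3\lambda$ strictly above $4-\tfrac3p$. Since the claimed inequality only weakens as $\epsilon$ increases, it suffices to prove it for small $\epsilon$, and for such $\epsilon$ a finite admissible $q$ (hence $\lambda\in[0,1)$, so that the interpolation is legitimate) always exists.
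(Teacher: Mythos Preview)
Your proposal is correct and follows essentially the same route as the paper: reduce to $s\in(0,1]$ via the Poincar\'e inequality, then interpolate \`a la Lemma~\ref{J0} between the $p=1$ bound and the high-moment bound, letting the auxiliary exponent go to infinity to squeeze the loss down to an arbitrary $\epsilon$. The paper's one-line sketch cites ``Lemma~\ref{l2}(ii)'' as an endpoint, but this is evidently a slip for Lemma~\ref{l2}(i) (the $L^2$-norm estimate, which is the analogue of Lemma~\ref{square bound}); you correctly identified and used~(i).
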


The following proposition is obtained by a simple modification of the proof of Lemma \ref{J1}.
\begin{propo} \label{H1p bound}
Assume that $M$ is compact with $d  = 4$. Given $\epsilon > 0$. Then for any $1 \le p < \infty$, there exists $c > 0$ such that for any $1 \le p < \infty$, $t\geq t_M$ and $r \in (0, 1]$,
\beqs
\sup_{\nu \in \Pr(M)} \E^\nu\left[ \int_M |\nabla (-\L)^{-1} (f_{t, r}^{(\alpha)} - 1)|^{2p} \d\mu \right]^{\frac1p}  \, \leq \, c t^{-1} r^{-5 + \frac5p - \epsilon}.
\eeqs
\end{propo}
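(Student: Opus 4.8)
The plan is to rerun the argument of Lemma~\ref{J1}, the only change being that the regularization time $r$ is carried through every heat-semigroup estimate instead of being discarded (as in the passage from $\|f_{t,r+s/4}^{(\alpha)}-1\|_2$ to $\|f_{t,s/4}^{(\alpha)}-1\|_2$ there). The input that makes this possible is the elementary identity $P_s(f_{t,r}^{(\alpha)}-1)=f_{t,s+r}^{(\alpha)}-1$ for all $s\ge0$, immediate from \eqref{def0} and the orthonormality of $\{\phi_i\}$. Writing $(-\L)^{-1}=\int_0^\infty P_s\,\d s$ and $f_{t,s+r}^{(\alpha)}-1=P_{(s+r)/2}\bigl(f_{t,(s+r)/2}^{(\alpha)}-1\bigr)$, the local Poincar\'e inequality $|\nabla P_u h|\le c(1\wedge u)^{-1/2}\bigl(P_u(h^2)\bigr)^{1/2}$ (with $u=(s+r)/2$ and $h=f_{t,(s+r)/2}^{(\alpha)}-1$) gives
\beqs
\bigl|\nabla P_s(f_{t,r}^{(\alpha)}-1)\bigr|\,\le\,c\,(1\wedge(s+r))^{-\frac12}\,\Bigl(P_{(s+r)/2}\bigl(|f_{t,(s+r)/2}^{(\alpha)}-1|^2\bigr)\Bigr)^{\frac12}.
\eeqs
The decisive feature, absent from Lemma~\ref{J1}, is that the singular prefactor is evaluated at $s+r$, hence remains bounded as $s\downarrow0$.

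Next I would integrate over $s$, move the $L^{2p}(\mu)$-norm inside by Minkowski's integral inequality, and estimate $\bigl\|P_{(s+r)/2}(|h|^2)\bigr\|_p^{1/2}\le\|P_{(s+r)/2}\|_{1\to p}^{1/2}\|h\|_2\le c(1\wedge(s+r))^{-\frac d4(1-\frac1p)}\|h\|_2$ via the ultracontractive bound \eqref{general sobolev}; for $d=4$ the exponent is $-(1-\frac1p)$. This yields, for every sample path,
\beqs
\Bigl(\int_M|\nabla(-\L)^{-1}(f_{t,r}^{(\alpha)}-1)|^{2p}\,\d\mu\Bigr)^{\frac1{2p}}\,\le\,c\int_0^\infty(1\wedge(s+r))^{-\frac32+\frac1p}\,\bigl\|f_{t,(s+r)/2}^{(\alpha)}-1\bigr\|_2\,\d s.
\eeqs
Squaring, taking $\E^\nu$, applying Minkowski's integral inequality once more (now in $L^{2p}(\PP^\nu)$), and inserting Lemma~\ref{inter}, which I would use in the form $\E^\nu[\|f_{t,w}^{(\alpha)}-1\|_2^{2p}]^{1/(2p)}\le c\,t^{-1/2}\e^{-\lambda_1 w}(1\wedge w)^{-2+\frac3{2p}-\frac\epsilon2}$ with $w=(s+r)/2$, and using $(1\wedge(s+r))\sim 1\wedge\max(s,r)$ to collect the (all negative) exponents, I would obtain
\beqs
\sup_{\nu\in\Pr(M)}\E^\nu\Bigl[\int_M|\nabla(-\L)^{-1}(f_{t,r}^{(\alpha)}-1)|^{2p}\,\d\mu\Bigr]^{\frac1p}\,\le\,c\,t^{-1}\Bigl(\int_0^\infty(1\wedge\max(s,r))^{-\frac72+\frac5{2p}-\frac\epsilon2}\e^{-\frac{\lambda_1}2 s}\,\d s\Bigr)^2.
\eeqs

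Finally I would evaluate this last integral by splitting $[0,\infty)=(0,r)\cup(r,1)\cup(1,\infty)$. On $(0,r)$ the integrand is bounded by $c\,r^{-7/2+5/(2p)-\epsilon/2}$, contributing $\le c\,r^{-5/2+5/(2p)-\epsilon/2}$; on $(r,1)$ one has $1\wedge\max(s,r)=s$ and the exponent $-\tfrac72+\tfrac5{2p}-\tfrac\epsilon2<-1$ for every $p\ge1$, so this piece is again $O(r^{-5/2+5/(2p)-\epsilon/2})$; and on $(1,\infty)$ the exponential decay makes the contribution $O(1)$. Since $r\le1$ and the exponent is negative the whole integral is $O(r^{-5/2+5/(2p)-\epsilon/2})$, and squaring gives the asserted $c\,t^{-1}r^{-5+\frac5p-\epsilon}$. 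The delicate point — and the reason the right-hand side blows up in $r$ at all — is integrability of the $\d s$-integral near $s=0$: the naive copy of Lemma~\ref{J1}, which carries out all the heat-kernel estimates at scale $s$ rather than $s+r$, leads after inserting Lemma~\ref{inter} to a $\d s$-integral that fails to be integrable at the origin once $d=4$ (there the critical exponent \eqref{defofp0} degenerates to $p_0=1$, so Lemma~\ref{J1} itself carries no content), for every $p\ge1$; keeping $r$ inside the heat-kernel scales both repairs the divergence and is exactly what manufactures the factor $r^{-5+\frac5p-\epsilon}$.
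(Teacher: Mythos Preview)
Your proof is correct and follows essentially the same route as the paper's: both keep the regularization time $r$ inside the heat-kernel scales (you via the argument $(s+r)/2$, the paper by shifting the domain of integration to $[r/2,\infty)$ via $(-\L)^{-1}(f_{t,r}^{(\alpha)}-1)=\int_{r/2}^\infty P_s(f_{t,r/2}^{(\alpha)}-1)\,\d s$) so that the $\d s$-integral is effectively cut off at level $r$, after which Lemma~\ref{inter} supplies the required moment bound. Your direct use of Minkowski in $L^{2p}(\PP^\nu)$ in place of the paper's Cauchy--Schwarz split with auxiliary parameter $\kappa$ (inherited from Lemma~\ref{J1}) is a mild streamlining, but the underlying argument is the same.
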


\begin{proof}
Notice that for $1 \leq p < \infty$, combining \eqref{I10} and the semigroup property,
\begin{align*}
& \int_M |\nabla (-\L)^{-1} (f_{t, r}^{(\alpha)} - 1)|^{2p} \d\mu  \, \le \,  \int_M \bigg( \int_{r/2}^{\infty} |\nabla P_s (f_{t, \frac{r}{2}}^{(\alpha)} - 1)| \d s \bigg)^{2p}   \d \mu  \\
& \, \leq \, c   \bigg( \int_{r/2}^{\infty} (1 \wedge s)^{-\frac12} \big\| P_{\frac{s}{2}} \big( |P_{\frac{s}{2}} (f_{t, \frac{r}{2}}^{(\alpha)} - 1) |^2 \big)  \big\|_p^{\frac 12}   \d s \bigg)^{2p}.
\end{align*}
Along the same lines as the argument in Lemma \ref{J1}, we obtain
\beqs
\E^\nu\left[ \int_M |\nabla (-\L)^{-1} (f_{t, r}^{(\alpha)} - 1)|^{2p} \d\mu \right]^{\frac1p} \, \leq \, c_\kappa  \int_{r/2}^{\infty} (1 \wedge s)^{-3 + \frac{2}{p} + \kappa }  \E^\nu \big[ \big\| f_{t, \frac{s}{4}}^{(\alpha)} - 1 \big\|_2^{2p} \big]^{\frac 1p}  \d s,
\eeqs
where $\kappa < 1$ to be specified later. Then this proposition follows by applying Lemma \ref{inter} for $\frac{\epsilon}{2}$ and choosing $\kappa = 1 - \frac{\epsilon}{2}$.

\end{proof}

Now we turn to the proof of Theorem \ref{main2} by using the argument given in Section \ref{low}. We start with \eqref{lower bound}.
\beqs
\E^\nu \big[ \W_2^2(\mu_{t, r}^{(\alpha)}, \mu^{(\alpha)}) \big] \, \ge \,  \E^\nu \left[ \int_M |\nabla (-\L)^{-1} (f_{t, r}^{(\alpha)} - 1)|^2 \d\mu \right] \, - \, c \, \Theta^\nu(t, r),
\eeqs
where
\beqs
\Theta^\nu(t, r) \, := \, \E^\nu \left[ \| f_{t, r}^{(\alpha)} - 1 \|_\infty \cdot \int_M |\nabla (-\L)^{-1} (f_{t, r}^{(\alpha)} - 1)|^2 \d\mu  \right].
\eeqs
For the first term on the right-hand side of this inequality, by Lemma \ref{l2}, there exists $c > 0$ such that for $t \ge t_M$ and $r \in (0, 1]$,
\begin{align} \label{t1}
& \inf_{\nu \in \Pr(M)} \E^\nu \left[ \int_M |\nabla (-\L)^{-1} (f_{t, r}^{(\alpha)} - 1)|^2 \d\mu \right]\nonumber\\
\, &\ge \, \frac{2\alpha^2}{2\alpha - 1} \cdot t^{-1} \sum_{i = 1}^{\infty} \lambda_i^{-2} \e^{-2r\lambda_i} \, - \, c \big( t^{-2\alpha} \log(r^{-1} + 1) \, + \,  t^{-2(\alpha \wedge 1)} \big).
\end{align}
By the small time asymptotics of the trace (see \cite[Theorem 1.5]{CR19}),
\beqs
\lim_{s \to 0} (4 \pi s)^{d/2} \sum_{i = 1}^{\infty} \e^{-s \lambda_i} \, = \, 1,
\eeqs
and setting $d =4$, we have
\beq \label{t2}
\sum_{i = 1}^{\infty} \lambda_i^{-2} \e^{-2r\lambda_i} \, = \, \int_{2r}^{\infty} \bigg(\sum_{i = 1}^{\infty} \e^{-s \lambda_i} \bigg) s \d s \, = \, \frac{\ln (r^{-1})}{16 \pi^2} (1 \, + \, o(1)), \quad \text{as } r \to 0.
\eeq
Then it remains to bound $\Theta^\nu(t, r)$ from above. To reach this goal, by H\"older's inequality and Proposition \ref{H1p bound}, for any $1 \le p < \infty$,
\beqs
\Theta^\nu(t, r) \, = \, \E^\nu \left[ \| f_{t, r}^{(\alpha)} - 1 \|_\infty \cdot \int_M |\nabla (-\L)^{-1} (f_{t, r}^{(\alpha)} - 1)|^2 \d\mu  \right] \, \le \, \Theta_p^\nu(t, r),
\eeqs
where
\beqs
\Theta_p^\nu(t, r) \, := \, \E^\nu \big[ \| f_{t, r}^{(\alpha)} - 1 \|_\infty^\frac{p}{p-1} \big]^{1 - \frac1p} \cdot \E^\nu\left[ \int_M |\nabla (-\L)^{-1} (f_{t, r}^{(\alpha)} - 1)|^{2p} \d\mu \right]^{\frac1p}.
\eeqs
By Lemma \ref{con} (i) and Proposition \ref{H1p bound}, it follows that for any $\epsilon > 0$, there exists $c_{p, \epsilon} > 0$ such that for $t \ge t_M$ and $r \in (0, 1]$,
\beqs
\sup_{\nu \in \Pr(M)} \Theta_p^\nu(t, r) \, \leq \, c_{p, \epsilon} t^{-\frac32} r^{-7 + \frac{5}{p} - \epsilon}.
\eeqs
Combined with \eqref{t1}, it follows that
\begin{align*}
& \inf_{\nu \in \Pr(M)} \E^\nu \big[ \W_2^2(\mu_{t, r}, \mu) \big]\\
\, &\ge \, \frac{2\alpha^2}{2\alpha - 1} \cdot t^{-1} \sum_{i = 1}^{\infty} \lambda_i^{-2} \e^{-2r\lambda_i} \, - \, c \big( t^{-2\alpha} \log(r^{-1} + 1) \, + \,  t^{-2(\alpha \wedge 1)} \big) \, - \,  c_{p, \epsilon} t^{-\frac32} r^{-7 + \frac{5}{p} - \epsilon}.
\end{align*}
For any $\varsigma \in (0, \frac14)$, we can always find $1 < p' < \infty$ and $\epsilon' > 0$ such that
\begin{align*}
\vartheta \, : =  \, \frac12 \, - \, \varsigma \left( 7 - \frac{5}{p'} + \epsilon' \right) \, > \, 0.
\end{align*}
Now we choose $r = r(t) = t^{-\varsigma}$. Then
\begin{align} \label{t3}
& \inf_{\nu \in \Pr(M)} \E^\nu \big[ \W_2^2(\mu_{t, r}, \mu) \big]\nonumber\\
\, &\ge \, \frac{2\alpha^2}{2\alpha - 1} \cdot t^{-1} \sum_{i = 1}^{\infty} \lambda_i^{-2} \e^{-2r\lambda_i} \, - \, c_\varsigma \big( t^{-2\alpha} \log(r^{-1} + 1) \, + \,  t^{-2(\alpha \wedge 1)} \, + \,  t ^{-1 - \vartheta} \big).
\end{align}

Finally, combining \eqref{t2}, \eqref{t3} and the fact that $\W_2^2(\mu_t, \mu) \ge \e^{2Kr} \W_2^2(\mu_{t, r}, \mu)$, it follows that
\beqs
\liminf_{t \to \infty} \frac{t}{\ln t}  \inf_{\nu \in \Pr(M)} \E^\nu \big[ \W_2^2(\mu_t, \mu) \big]  \, \ge \, \frac{\alpha^2}{2\alpha - 1}\frac{\varsigma}{8 \pi^2}.
\eeqs
Since $\varsigma \in (0, \frac14)$ is arbitrary, Theorem \ref{main2} is established.

\paragraph{Acknowledgement.} The author would like to thank Prof. Feng-Yu Wang for helpful discussions and Prof. Michel Ledoux for instructive comments during the preparation of this work.

\end{document}